\newtheorem{thm}{Theorem}[section]
\newtheorem{lem}[thm]{Lemma}
\newtheorem{cor}[thm]{Corollary}
\newtheorem{pro}[thm]{Proposition}
\newtheorem{ex}[thm]{Example}
\newtheorem{rmk}[thm]{Remark}
\newtheorem{defi}[thm]{Definition}
\newcommand {\emptycomment}[1]{}
\newcommand{\lon }{\,\rightarrow\,}
\newcommand{\be }{\begin{equation}}
\newcommand{\ee }{\end{equation}}
\newcommand{\g}{\mathfrak g}
\newcommand{\huaB}{\mathcal{B}}
\newcommand{\huaL}{\mathcal{L}}
\newcommand{\huaE}{\mathcal{E}}
\newcommand{\huaV}{\mathcal{V}}
\newcommand{\huaH}{\mathcal{H}}
\newcommand{\Id}{\rm{Id}}
\newcommand{\br}[1]{   [ \cdot,    \cdot  ]   }
\newcommand{\Hom}{\mathrm{Hom}}
\newcommand{\gl}{\mathfrak {gl}}
\newcommand{\ad}{\mathrm{ad}}
\newcommand{\K}{\mathbb{K}}
\begin{document}

\title[Symplectic  structure, product and complex structures on Leibniz algebras]{Symplectic  structure, product structures and complex structures on Leibniz algebras}

\author{Rong Tang}
\address{Department of Mathematics, Jilin University, Changchun 130012, Jilin, China}
\email{tangrong@jlu.edu.cn}

\author{Nanyan Xu}
\address{Department of Mathematics, Jilin University, Changchun 130012, Jilin, China}
\email{xuny20@jlu.edu.cn}

\author{Yunhe Sheng}
\address{Department of Mathematics, Jilin University, Changchun 130012, Jilin, China}
\email{shengyh@jlu.edu.cn}


\begin{abstract}

In this paper, a symplectic structure on a Leibniz algebra is defined to be a {\em symmetric} nondegenerate bilinear form satisfying certain compatibility condition, and a phase space of a Leibniz algebra is defined to be a symplectic Leibniz algebra satisfying certain conditions. We show that a Leibniz algebra has a phase space if and only if there is a compatible Leibniz-dendriform algebra, and phase spaces of Leibniz algebras one-to-one corresponds to Manin triples of Leibniz-dendriform
algebras. Product (paracomplex) structures and complex structures on Leibniz algebras are studied in terms of decompositions of Leibniz algebras. A  para-K\"{a}hler structure on a Leibniz algebra is defined to be a symplectic structure and a paracomplex structure satisfying a compatibility condition. We show that a symplectic Leibniz algebra admits a para-K\"{a}hler structure if and only if the Leibniz algebra is the direct sum of two isotropic subalgebras as vector spaces. A complex product structure on a Leibniz algebra consists of a complex structure and a product structure satisfying a compatibility condition.  A  pseudo-K\"{a}hler structure on a Leibniz algebra is defined to be a symplectic structure and a complex structure satisfying a compatibility condition. Various properties and relations of complex product structures and pseudo-K\"{a}hler structures are studied. In particular, Leibniz-dendriform algebras give rise to complex product structures and pseudo-K\"{a}hler structures naturally.
\end{abstract}

\renewcommand{\thefootnote}{}
\footnotetext{2020 Mathematics Subject Classification.    17A32, 17B40}
\keywords{Leibniz algebra, Leibniz-dendriform algebra, symplectic structure,
product structure, complex structure}

\maketitle

\tableofcontents

\allowdisplaybreaks


\section{Introduction}
\label{sec:intr}

The purpose of this paper is to study symplectic structures, product structures, complex structures and related structures on Leibniz algebras. The main innovation and difference from the case of Lie algebras is using {\em symmetric} bilinear form in the definition of a symplectic structure on a Leibniz algebra.

 Leibniz algebras were first discovered by Bloh who called them D-algebras  \cite{Bloh}. Then Loday rediscovered this algebraic structure and called them Leibniz algebras \cite{Loday,Loday and Pirashvili}.  Leibniz algebras have various applications in both mathematics and physics. In particular, Leibniz algebras are  the underlying structures of embedding tensors, and have applications in higher gauge theories  \cite{KS,SW}. Recently, Leibniz algebras  are studied from different aspects.
Global and local integrations of Leibniz algebras were studied in \cite{BW,Int1}; deformation quantization of Leibniz algebras was studied in \cite{DW}; semisimple Leibniz algebras were studied in \cite{GKO}; representations and cohomologies of Leibniz algebras were studied in \cite{faithful rep,Bena,Omirov,Wagemann,conformal rep,Loday and Pirashvili,Sheng-Liu}.

Recall that a symplectic structure on a Lie algebra $\g$ is a {\em skew-symmetric} nondegenerate 2-cocycle. The underlying structure of a symplectic Lie algebra is a quadratic pre-Lie algebra \cite{Chu}. Since the operad of Leibniz algebras is an anticyclic operad \cite{Chapoton}, so one should use a {\em symmetric} bilinear form to define a {\bf symplectic  Leibniz algebra} and use  a {\em skew-symmetric} bilinear form to define a quadratic Leibniz algebra. A {\bf symplectic  structure} on a Leibniz algebra $(\huaE,[\cdot,\cdot]_\huaE)$ is a nondegenerate
symmetric bilinear form $\huaB$ satisfying the following equality:
\begin{eqnarray}\label{symplectic}
\huaB(z,[x,y]_\huaE)=-\huaB(y,[x,z]_\huaE)+\huaB(x,[y,z]_\huaE)+\huaB(x,[z,y]_\huaE).
\end{eqnarray}
This is different from the case of Lie algebras, whose operad  is  cyclic. Using skew-symmetric quadratic Leibniz algebras, a bialgebra theory was developed in \cite{Tang-Sheng} with connections to the Leibniz Yang-Baxter equation. Symmetric symplectic structures on Leibniz algebras one-to-one correspondence to symmetric solutions of the Leibniz classical Yang-Baxter equation. The notion of a Leibniz-dendriform algebra was also introduced in \cite{Tang-Sheng} as the underlying structure of Rota-Baxter operators on Leibniz algebras. Leibniz-dendriform algebras play important roles in the study of Leibniz algebras, similar to the role that pre-Lie algebras play in the study of Lie algebras. This kind of algebraic structures was further studied in \cite{Das} under the terminology of pre-Leibniz algebras.  We show that there is a one-to-one correspondence between (symmetric) symplectic Leibniz algebras and quadratic Leibniz-dendriform algebras. Also using the (symmetric) symplectic Leibniz algebras, we introduce the notion of a phase space of a Leibniz algebra. We show that a Leibniz algebra has a phase space if and only if there is a compatible Leibniz-dendriform algebra, and there is a one-to-one correspondence between Manin triples of Leibniz-dendriform
algebras and phase spaces of Leibniz algebras.

The notion of a Nijenhuis operator on a Leibniz algebra was introduced in \cite{Carinena} in the study of deformations of Dirac structures in a Lie bialgebroid. A linear map $E:\huaE\to \huaE$ is called a Nijenhuis operator on a Leibniz algebra
 $(\huaE,[\cdot,\cdot]_{\huaE})$ if it satisfies the following integrability condition:
  $$
  [Ex,Ey]_\huaE=E([Ex,y]_\huaE+[x,Ey]_\huaE-E[x,y]_\huaE).
  $$
   A product structure on a Leibniz algebra $\huaE$ is a linear map $E:\huaE\rightarrow\huaE$ satisfying $E^2=\Id$ and the above integrability condition. The existence of product structures on a Leibniz algebra can be characterized by certain decompositions of the Leibniz algebra. Two special cases, namely strict product structures and abelian product structures are studied in detail with applications to decompositions of Leibniz algebras. A paracomplex structure on a Leibniz algebra is defined to be a product structure such that the eigenspaces   associated to the eigenvalues $\pm1$ have the same
dimension.

Similarly, a complex structure on a real Leibniz algebra $\huaE$ is a linear map $E:\huaE\rightarrow\huaE$ satisfying $E^2=-\Id$ and the above integrability condition. The existence of complex structures on a real Leibniz algebra can also be characterized by certain decompositions of the Leibniz algebra. Parallel to the case of product structures, there are also two special complex structures, which are called strict complex structures and abelian complex structures.

Finally we study para-K\"{a}hler structures, complex product structures and pseudo-K\"{a}hler structures on a Leibniz algebra.
A para-K\"{a}hler structure is defined to be a symplectic structure and a paracomplex structure satisfying a compatibility condition. We show that a symplectic Leibniz algebra admits a para-K\"{a}hler structure if and only if the Leibniz algebra is the direct sum of two isotropic subalgebras as vector spaces. A complex product structure on a Leibniz algebra consists of a complex structure and a product structure satisfying a compatibility condition.  A  pseudo-K\"{a}hler structure on a Leibniz algebra is defined to be a symplectic structure and a complex structure satisfying a compatibility condition. Various properties and relations of complex product structures and pseudo-K\"{a}hler structures are studied. In particular, Leibniz-dendriform algebras give rise to complex product structures and pseudo-K\"{a}hler structures naturally. See \cite{Andrada0,ABD,Bai-2,Banayadi0,Bajo,Adela,Benayadi,LUV-1,LUV-2} for more studies of these structures in the context of Lie algebras.

 The paper is organized as follows. In Section \ref{sec:pre}, we recall some basic notions of Leibniz algebras, Leibniz-dendriform algebras and relative Rota-Baxter operators on Leibniz algebras. In Section \ref{sec:sym}, we study symlectic structures on Leibniz algebras and phase spaces of Leibniz algebras in terms of Leibniz-dendriform algebras. In Section \ref{sec:pro}, we study product structures on Leibniz algebras with applications to decompositions of Leibniz algebras. In Section \ref{sec:com}, we study complex structures on a real Leibniz algebra with applications to decompositions of the complexification of the Leibniz algebra. In Section \ref{sec:sympro}, we study para-K\"{a}hler structures on Leibniz algebras, which consists of a symplectic structure and a paracomplex structure satisfying a compatibility condition. In Section \ref{sec:compro}, we study  complex product structures on  Leibniz algebras which consists of a complex structure and a product structure satisfying a compatibility condition. In Section \ref{sec:symcom}, we study pseudo-K\"{a}hler structures on a Leibniz algebra which consists of a symplectic structure and a complex structure satisfying a compatibility condition.

In this paper, we work over an algebraically closed field $\mathds{K}$ of characteristic 0 and all the vector spaces are over $\K$ and finite-dimensional.

\section{Leibniz algebras, Leibniz-dendriform algebras and relative Rota-Baxter operators}\label{sec:pre}

In this section, we recall Leibniz algebras, Leibniz-dendriform algebras and relative Rota-Baxter operators on Leibniz algebras.

\begin{defi}
A {\bf Leibniz algebra} is a vector space $\huaE$ together with a bilinear map $[\cdot,\cdot]_\huaE:\huaE\otimes \huaE\to\huaE$ such that
\begin{eqnarray}
\label{Leibniz}[x,[y,z]_\huaE]_\huaE=[[x,y]_\huaE,z]_\huaE+[y,[x,z]_\huaE]_\huaE,\quad\forall x,y,z\in\huaE.
\end{eqnarray}
\end{defi}

\begin{defi}
A {\bf representation} of a Leibniz algebra $(\huaE,[\cdot,\cdot]_{\huaE})$ is a vector space $\huaV$ equipped with two linear maps $l,r:\huaE\lon\gl(\huaV)$ such that the following equalities hold for all $x,y\in\huaE$:
\begin{eqnarray}
\label{rep-1}l_{[x,y]_{\huaE}}&=&[l_x,l_y],\\
\label{rep-2}r_{[x,y]_{\huaE}}&=&[l_x,r_y],\\
\label{rep-3}r_y\circ l_x&=&-r_y\circ r_x.
\end{eqnarray}
\end{defi}

Define the left multiplication $L:\huaE\longrightarrow\gl(\huaE)$ and the right multiplication $R:\huaE\longrightarrow\gl(\huaE)$ respectively by $L_xy=[x,y]_\huaE$ and $R_xy=[y,x]_\huaE$ for all $x,y\in \huaE$. Then $(\huaE;L,R)$ is a representation of $(\huaE,[\cdot,\cdot]_{\huaE})$, which we call the {\bf regular representation}.
Define two linear maps $L^*,R^*:\huaE\longrightarrow\gl(\huaE^*)$ with $x\longrightarrow L^*_x$ and $x\longrightarrow R^*_x$ respectively (for all $x\in\huaE$) by
\begin{eqnarray}
\langle L^*_{x}\xi,y\rangle=-\langle \xi,[x,y]_\huaE\rangle,\,\,\,\,\langle R^*_{x}\xi,y\rangle=-\langle \xi,[y,x]_\huaE\rangle,\,\,\forall x,y\in\huaE,\,\,\xi\in\huaE^*.
\end{eqnarray}

\begin{lem}\label{lem:semidirectproduct}\cite{Tang-Sheng}
Let $(\huaV;l,r)$ be a representation of the Leibniz algebra $(\huaE,[\cdot,\cdot]_{\huaE})$. Then there is a Leibniz algebra structure on $\huaE\oplus\huaV$ given by:
\begin{eqnarray}\label{semidirect product}
[x+\xi,y+\eta]_\ltimes=[x,y]_\huaE+l_x(\eta)+r_y(\xi),\,\,\,\,\forall x,y\in\huaE,\,\,\xi,\eta\in\huaV.
\end{eqnarray}
This Leibniz algebra is called the {\bf semidirect product} of $\huaE$ and $\huaV$, and denoted by $\huaE\ltimes_{l,r}\huaV$.
\end{lem}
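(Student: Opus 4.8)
The plan is to verify directly that the bracket $[\cdot,\cdot]_\ltimes$ on $\huaE\oplus\huaV$ defined by \eqref{semidirect product} satisfies the Leibniz identity \eqref{Leibniz}. Since both sides of \eqref{Leibniz} are linear in each slot, it suffices to evaluate on arbitrary elements $x+\xi,\ y+\eta,\ z+\zeta$ with $x,y,z\in\huaE$ and $\xi,\eta,\zeta\in\huaV$, and to compare the $\huaE$-component and the $\huaV$-component of the resulting identity separately. Note first that the $\huaE$-component of $[a,b]_\ltimes$ depends only on the $\huaE$-parts of $a,b$ (it is simply $[\cdot,\cdot]_\huaE$), and that $\huaV$ is an abelian ideal, i.e. $[x+\xi,\eta]_\ltimes=l_x\eta$, $[\xi,y+\eta]_\ltimes=r_y\xi$ and $[\xi,\eta]_\ltimes=0$.

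Next I would treat the two components. For the $\huaE$-component: iterating the bracket and discarding all $\huaV$-valued contributions, the $\huaE$-component of \eqref{Leibniz} for $[\cdot,\cdot]_\ltimes$ reduces to $[x,[y,z]_\huaE]_\huaE=[[x,y]_\huaE,z]_\huaE+[y,[x,z]_\huaE]_\huaE$, which holds because $(\huaE,[\cdot,\cdot]_\huaE)$ is a Leibniz algebra. For the $\huaV$-component: expanding, the left-hand side $[x+\xi,[y+\eta,z+\zeta]_\ltimes]_\ltimes$ contributes $l_xl_y\zeta+l_xr_z\eta+r_{[y,z]_\huaE}\xi$, the term $[[x+\xi,y+\eta]_\ltimes,z+\zeta]_\ltimes$ contributes $l_{[x,y]_\huaE}\zeta+r_zl_x\eta+r_zr_y\xi$, and the term $[y+\eta,[x+\xi,z+\zeta]_\ltimes]_\ltimes$ contributes $l_yl_x\zeta+l_yr_z\xi+r_{[x,z]_\huaE}\eta$. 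Comparing the coefficient of $\zeta$ gives $l_xl_y=l_{[x,y]_\huaE}+l_yl_x$, which is \eqref{rep-1}; comparing the coefficient of $\eta$ gives $l_xr_z=r_{[x,z]_\huaE}+r_zl_x$, which is \eqref{rep-2}; and comparing the coefficient of $\xi$ gives $r_{[y,z]_\huaE}=r_zr_y+l_yr_z$, which follows by rewriting \eqref{rep-2} (with $x,z$ replaced by $y,z$) as $r_{[y,z]_\huaE}=l_yr_z-r_zl_y$ and then using \eqref{rep-3} in the form $r_zl_y=-r_zr_y$.

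There is essentially no genuine obstacle here: the statement is a bookkeeping verification, and each of the three representation axioms \eqref{rep-1}, \eqref{rep-2}, \eqref{rep-3} is consumed exactly once, which is a good consistency check that the computation has been carried out correctly. The only point requiring care is the order of arguments in the non-symmetric bracket — in particular that $R_xy=[y,x]_\huaE$, so that acting on the $\huaV$-part by $[y+\eta,-]_\ltimes$ produces $l_y$, whereas acting by $[-,z+\zeta]_\ltimes$ produces $r_z$ — so I would keep the roles of the three elements fixed throughout the expansion. Once both components are matched, \eqref{Leibniz} holds for $[\cdot,\cdot]_\ltimes$ and the semidirect product $\huaE\ltimes_{l,r}\huaV$ is a Leibniz algebra.
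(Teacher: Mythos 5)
Your verification is correct: the componentwise expansion and the matching of the $\zeta$-, $\eta$- and $\xi$-coefficients against \eqref{rep-1}, \eqref{rep-2} and \eqref{rep-2}--\eqref{rep-3} is exactly the standard direct check, and since the paper only cites \cite{Tang-Sheng} for this lemma without reproducing a proof, there is nothing further to compare. The one cosmetic slip is your consistency remark: \eqref{rep-2} is actually used twice (once for the $\eta$-coefficient and once, rewritten, for the $\xi$-coefficient together with \eqref{rep-3}), so the claim that each axiom is consumed exactly once is not literally accurate, though it does not affect the validity of the argument.
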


\begin{lem}\cite{Tang-Sheng}\label{dual rep}
Let $(\huaV;l,r)$ be a representation of the Leibniz algebra $(\huaE,[\cdot,\cdot]_{\huaE})$. Then
$$(\huaV^*;l^*,-l^*-r^*)$$
is a representation of $(\huaE,[\cdot,\cdot]_{\huaE})$, which is called the {\bf dual representation} of $(\huaV;l,r)$.
\end{lem}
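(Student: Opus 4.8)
The plan is to verify directly the three defining identities \eqref{rep-1}--\eqref{rep-3} of a representation for the pair $(\mathfrak{l},\mathfrak{r}):=(l^*,-l^*-r^*)$ acting on $\huaV^*$, where $l^*,r^*:\huaE\to\gl(\huaV^*)$ are defined as usual by $\langle l^*_x\xi,v\rangle=-\langle\xi,l_xv\rangle$ and $\langle r^*_x\xi,v\rangle=-\langle\xi,r_xv\rangle$ for $x\in\huaE$, $\xi\in\huaV^*$, $v\in\huaV$. The one bookkeeping fact that drives the proof is that, with this sign convention, dualization reverses composition with a sign, $(\phi\circ\psi)^*=-\psi^*\circ\phi^*$ for $\phi,\psi\in\gl(\huaV)$, and therefore preserves commutators of endomorphisms: $[\phi,\psi]^*=[\phi^*,\psi^*]$.

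First I would treat \eqref{rep-1}: dualizing the hypothesis $l_{[x,y]_\huaE}=[l_x,l_y]$ and invoking $[\phi,\psi]^*=[\phi^*,\psi^*]$ gives at once $\mathfrak{l}_{[x,y]_\huaE}=l^*_{[x,y]_\huaE}=[l^*_x,l^*_y]=[\mathfrak{l}_x,\mathfrak{l}_y]$. Next, dualizing $r_{[x,y]_\huaE}=[l_x,r_y]$ yields $r^*_{[x,y]_\huaE}=[l^*_x,r^*_y]$, and combining this with the identity just obtained gives
\[
\mathfrak{r}_{[x,y]_\huaE}=-l^*_{[x,y]_\huaE}-r^*_{[x,y]_\huaE}=-[l^*_x,l^*_y]-[l^*_x,r^*_y]=[l^*_x,-l^*_y-r^*_y]=[\mathfrak{l}_x,\mathfrak{r}_y],
\]
which is \eqref{rep-2}.

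For \eqref{rep-3}, I would expand $\mathfrak{r}_y\circ\mathfrak{l}_x+\mathfrak{r}_y\circ\mathfrak{r}_x$; after the common terms cancel, what remains to be shown is $(l^*_y+r^*_y)\circ r^*_x=0$. This is precisely the dual of the hypothesis \eqref{rep-3} applied to the relabelled pair $(y,x)$, namely $r_x\circ(l_y+r_y)=0$: applying $(\phi\circ\psi)^*=-\psi^*\circ\phi^*$ to $r_x\circ(l_y+r_y)$ produces $-(l^*_y+r^*_y)\circ r^*_x$, so this operator vanishes. Equivalently, each of the three identities can be checked by a one-line computation pairing both sides against an arbitrary $v\in\huaV$ and moving every operator across $\langle\cdot,\cdot\rangle$.

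There is no real conceptual obstacle; the only points requiring care are the sign in $(\phi\circ\psi)^*=-\psi^*\circ\phi^*$ (forced by the minus sign in the definition of $l^*$) and the need to feed the third axiom the pair $(y,x)$ rather than $(x,y)$ before dualizing.
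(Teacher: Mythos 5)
Your verification is correct: with the negative-transpose convention $\langle\phi^*\xi,v\rangle=-\langle\xi,\phi v\rangle$ one indeed has $(\phi\circ\psi)^*=-\psi^*\circ\phi^*$ and hence $[\phi,\psi]^*=[\phi^*,\psi^*]$, and your three checks (including the relabelling $(x,y)\mapsto(y,x)$ before dualizing the third axiom, which reduces it to $(l^*_y+r^*_y)\circ r^*_x=0$) are exactly what is needed. The paper itself states this lemma without proof, citing \cite{Tang-Sheng}, and your direct verification of \eqref{rep-1}--\eqref{rep-3} is the standard argument one would give there.
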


\emptycomment{
\begin{defi}
The Leibniz cohomology of $\huaE$ with coefficients in $(\huaV;l,r)$ is the cohomology of the cochain complex $C^k(\huaE,\huaV)=
\Hom(\otimes^k\huaE,\huaV),(k\ge 0)$ with the coboundary operator
$$\partial:C^k(\huaE,\huaV)\longrightarrow C^
{k+1}(\huaE,\huaV)$$
defined by
\begin{eqnarray}
(\partial f)(x_1,\cdots,x_{k+1})&=&\sum_{i=1}^{k}(-1)^{i+1}l(x_i)f(x_1,\cdots,\hat{x_i},\cdots,x_{k+1})+(-1)^{k+1}r(x_{k+1})f(x_1,\cdots,x_{k})\\
                      \nonumber&&+\sum_{1\le i<j\le k+1}(-1)^if(x_1,\cdots,\hat{x_i},\cdots,x_{j-1},[x_i,x_j]_\g,x_{j+1},\cdots,x_{k+1}).
\end{eqnarray}
The resulting cohomology is denoted by $\huaH^*(\huaE,\huaV)$.
\end{defi}

 An important representation is the regular representation $(\huaE,L,R)$. We denote the complex by $(C^*(\huaE,\huaE),\partial^{reg})$. The resulting cohomology is denoted by $\huaH^*_{\rm reg}$.}

\begin{defi}\cite{Tang-Sheng}
A {\bf Leibniz-dendriform algebra} is a vector space $\huaL$ equipped with two bilinear maps
$\lhd$ and $\rhd:\huaL\otimes \huaL\lon \huaL$ such that the following equalities hold for all
$x,y,z\in \huaL$:
\begin{eqnarray}
\label{p1}(x\lhd y)\lhd z&=&x\lhd(y\lhd z)-y\lhd(x\lhd z)-(x\rhd y)\lhd z,\\
\label{p2}x\lhd(y\rhd z) &=&(x\lhd y)\rhd z+y\rhd(x\lhd z)+y\rhd(x\rhd z),\\
\label{p3}x\rhd(y\rhd z) &=&(x\rhd y)\rhd z+y\lhd(x\rhd z)-x\rhd(y\lhd z).
\end{eqnarray}
\end{defi}

\begin{ex}\cite{Tang-Sheng}
Let $V$ be a vector space. Define two bilinear maps $\lhd$ and $\rhd$ on the vector space
$\gl(V)\oplus V$ as follows:
\begin{eqnarray*}
(A+u)\lhd(B+v)=AB+Av,\quad (A+u)\rhd(B+v)=-BA.
\end{eqnarray*}
Then $(\gl(V)\oplus V,\lhd,\rhd)$ is a Leibniz-dendriform algebra.
\end{ex}

\begin{pro}\cite{Tang-Sheng}\label{LeiDen rep}
Let $(\huaL,\lhd,\rhd)$ be a Leibniz-dendriform algebra. Then $(\huaL,[\cdot,\cdot]_{\lhd, \rhd})$ is a Leibniz algebra, which is called the {\bf sub-adjacent Leibniz algebra} of $(\huaL,\lhd,\rhd)$ and $(\huaL,\lhd,\rhd)$ is called a
{\bf compatible Leibniz-dendriform algebra} structure on $(\huaL,[\cdot,\cdot]_{\lhd, \rhd})$, where the bilinear map $[\cdot,\cdot]_{\lhd, \rhd}:\huaL\otimes \huaL\lon \huaL$ is given by
\begin{eqnarray}\label{LeiDen to Lei}
[x,y]_{\lhd, \rhd}=x\lhd y+x\rhd y,\,\,\,\,\forall x,y\in \huaL.
\end{eqnarray}

 Moreover,  $(\huaL;L_\lhd,R_\rhd)$ is a representation of the Leibniz algebra $(\huaL,[\cdot,\cdot]_{\lhd, \rhd})$, where $L_\lhd$ and $R_\rhd:\huaL\lon\gl(\huaL)$
are given by
\begin{eqnarray*}
L_{\lhd x}y=x\lhd y,\,\,\,\,R_{\rhd x}y=y\rhd x,\,\,\,\,\forall x,y\in \huaL.
\end{eqnarray*}
\end{pro}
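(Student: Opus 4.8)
The plan is to verify everything by direct computation, using only the three defining identities \eqref{p1}, \eqref{p2}, \eqref{p3} of a Leibniz-dendriform algebra; there is nothing conceptual to do beyond bookkeeping, but the bookkeeping has to be organized correctly. The organizing remark is that $[x,y]_{\lhd,\rhd}=x\lhd y+x\rhd y=L_{\lhd x}y+R_{\rhd y}x$, so when one expands any of the relevant identities every term naturally lands in one of three groups according to which of $x,y,z$ is being acted on, and each group matches one of \eqref{p1}--\eqref{p3} after relabeling variables. I would first do the representation axioms \eqref{rep-1}, \eqref{rep-2}, \eqref{rep-3} and then the Leibniz identity \eqref{Leibniz}.

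Evaluating \eqref{rep-1} on $z\in\huaL$ gives $(x\lhd y)\lhd z+(x\rhd y)\lhd z$ on the left and $x\lhd(y\lhd z)-y\lhd(x\lhd z)$ on the right, and these agree precisely by \eqref{p1}. Evaluating \eqref{rep-2} on $z$ reduces, after unwinding the definitions of $L_\lhd$ and $R_\rhd$, to $z\rhd(x\lhd y)+z\rhd(x\rhd y)=x\lhd(z\rhd y)-(x\lhd z)\rhd y$, which is exactly \eqref{p2} with $(x,y,z)$ replaced by $(x,z,y)$. The one axiom that needs a little thought is \eqref{rep-3}: evaluated on $z$ it says $(x\lhd z)\rhd y=-(z\rhd x)\rhd y$, i.e.\ $(x\lhd z)\rhd y+(z\rhd x)\rhd y=0$, and this is \emph{not} the rearrangement of a single defining identity. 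I would obtain it by solving \eqref{p2} with variables $(x,z,y)$ for $(x\lhd z)\rhd y$ and solving \eqref{p3} with variables $(z,x,y)$ for $(z\rhd x)\rhd y$, and then adding: all six resulting terms cancel in pairs. This is the main (and essentially the only) obstacle; once one notices that the correct substitution is to force the "output'' slot of \eqref{p2} and \eqref{p3} to equal $y$, the cancellation is immediate.

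For the Leibniz identity \eqref{Leibniz} one can simply expand $[x,[y,z]_{\lhd,\rhd}]_{\lhd,\rhd}$, $[[x,y]_{\lhd,\rhd},z]_{\lhd,\rhd}$ and $[y,[x,z]_{\lhd,\rhd}]_{\lhd,\rhd}$ into their four $\{\lhd,\rhd\}$-terms each and observe that the required equality is precisely the sum of \eqref{p1}, \eqref{p2} and \eqref{p3}. Alternatively — and this is the cleaner route I would present — once \eqref{rep-1}--\eqref{rep-3} are in hand one avoids this expansion entirely: regroup the three terms of \eqref{Leibniz} by which of $x,y,z$ is acted upon; the $z$-part is \eqref{rep-1}, the $y$-part is \eqref{rep-2}, and the $x$-part, after rewriting $R_{\rhd[y,z]_{\lhd,\rhd}}$ via \eqref{rep-2}, collapses to $R_{\rhd z}\circ L_{\lhd y}=-R_{\rhd z}\circ R_{\rhd y}$, which is \eqref{rep-3}. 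Thus the Leibniz identity is a formal consequence of the representation property (this mirrors the pre-Lie algebra situation and is consistent with Lemma \ref{lem:semidirectproduct}), and the statement about $(L_\lhd,R_\rhd)$ requires no further argument. The last sentences of the proposition, namely that $L_{\lhd x}y=x\lhd y$ and $R_{\rhd x}y=y\rhd x$ define a representation, are then exactly \eqref{rep-1}--\eqref{rep-3} as just verified, completing the proof.
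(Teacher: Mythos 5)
Your verification is correct. The paper gives no proof of this proposition here (it is quoted from \cite{Tang-Sheng}), but your direct check is sound: \eqref{rep-1} and \eqref{rep-2} evaluated on $z$ are exactly \eqref{p1} and \eqref{p2} (the latter with arguments $(x,z,y)$), your derivation of \eqref{rep-3} by solving \eqref{p2} at $(x,z,y)$ and \eqref{p3} at $(z,x,y)$ and adding does make all six terms cancel, and your preferred route to \eqref{Leibniz} -- grouping the expansion by which of $x,y,z$ is acted upon, using \eqref{rep-1} for the $z$-part, \eqref{rep-2} for the $y$-part, and \eqref{rep-2} plus \eqref{rep-3} for the $x$-part -- is a complete and valid argument. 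One small caveat on the alternative route you mention but do not present: the Leibniz identity is not literally the sum of \eqref{p1}, \eqref{p2}, \eqref{p3} as written, but rather the combination \eqref{p2}$+$\eqref{p3}$-$\eqref{p1} (equivalently, the sum after reversing the orientation of \eqref{p1}); since each identity holds this does not affect the conclusion, but the sign bookkeeping there is slightly off, whereas the operator-identity route you actually present needs no correction.
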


 Finally we recall the notion of   relative Rota-Baxter operators on Leibniz algebras, which naturally give rise to Leibniz-dendriform algebras.

\begin{defi}\cite{Tang-Sheng}
Let $(\huaV;l,r)$ be a representation of the Leibniz algebra $(\huaE,[\cdot,\cdot]_\huaE)$. A linear map $T:\huaV\lon\huaE$ is called a {\bf relative Rota-Baxter operator} on $(\huaE,[\cdot,\cdot]_\huaE)$ with respect to $(\huaV;l,r)$ if $T$ satisfies:
\begin{eqnarray}
[Tv_1,Tv_2]_\huaE=T(l(Tv_1)v_2+r(Tv_2)v_1),\,\,\,\,\forall v_1,v_2\in\huaV.
\end{eqnarray}
\end{defi}

\begin{pro}\cite{Tang-Sheng}\label{Leibniz-dendriform}
Let $(\huaV;l,r)$ be a representation of the Leibniz algebra $(\huaE,[\cdot,\cdot]_\huaE)$ and $T:\huaV\lon\huaE$ a relative Rota-Baxter operator on $\huaE$ with respect to $(\huaV;l,r)$. Then there exists a Leibniz-dendriform algebra structure on $\huaV$ given by
$$ v_1\lhd v_2=l(Tv_1)v_2,\,\,\,\,v_1\rhd v_2=r(Tv_2)v_1,\,\,\,\,\forall v_1,v_2\in \huaV.$$
Moreover, $T$ is a Leibniz algebra homomorphism from $(\huaV,[\cdot,\cdot]_{\lhd, \rhd})$ to $(\huaE,[\cdot,\cdot]_\huaE)$.
\end{pro}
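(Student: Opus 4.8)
The plan is to verify directly the three defining identities \eqref{p1}, \eqref{p2}, \eqref{p3} of a Leibniz-dendriform algebra for the operations $v_1\lhd v_2=l(Tv_1)v_2$ and $v_1\rhd v_2=r(Tv_2)v_1$, and then to read off the homomorphism statement. The single observation that drives all three checks is the following: while $T$ applied to one dendriform product has no useful form, $T$ applied to the \emph{sum} of the two does, because $x\lhd y+x\rhd y=l(Tx)y+r(Ty)x$ is precisely the combination appearing in the relative Rota-Baxter identity, so that $T(x\lhd y)+T(x\rhd y)=[Tx,Ty]_\huaE$. Thus, whenever a combination of dendriform terms can be regrouped so that the "inner" copies of $T$ form such a pair, one replaces it by $T$ of a bracket in $\huaE$ and then brings in the representation axioms \eqref{rep-1}--\eqref{rep-3}.

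Concretely, for \eqref{p1} I would compute $(x\lhd y)\lhd z+(x\rhd y)\lhd z=l\big(T(x\lhd y)+T(x\rhd y)\big)z=l_{[Tx,Ty]_\huaE}z$, which by \eqref{rep-1} equals $l_{Tx}l_{Ty}z-l_{Ty}l_{Tx}z=x\lhd(y\lhd z)-y\lhd(x\lhd z)$; moving $(x\rhd y)\lhd z$ to the other side gives \eqref{p1}. For \eqref{p2}, the pair to regroup is on the right: $y\rhd(x\lhd z)+y\rhd(x\rhd z)=r\big(T(x\lhd z)+T(x\rhd z)\big)y=r_{[Tx,Tz]_\huaE}y$, which by \eqref{rep-2} equals $l_{Tx}r_{Tz}y-r_{Tz}l_{Tx}y$; adding $(x\lhd y)\rhd z=r_{Tz}l_{Tx}y$ leaves exactly $l_{Tx}r_{Tz}y=x\lhd(y\rhd z)$, which is \eqref{p2}. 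For \eqref{p3}, regrouping $x\rhd(y\rhd z)+x\rhd(y\lhd z)=r\big(T(y\rhd z)+T(y\lhd z)\big)x=r_{[Ty,Tz]_\huaE}x$ and expanding by \eqref{rep-2} reduces the required identity to $r_{Tz}l_{Ty}x=-r_{Tz}r_{Ty}x$, which is precisely \eqref{rep-3}; this is the one place where the third representation axiom is needed, so all three axioms get used across the three cases.

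With \eqref{p1}--\eqref{p3} in hand, $(\huaV,\lhd,\rhd)$ is a Leibniz-dendriform algebra, and Proposition \ref{LeiDen rep} then gives the sub-adjacent Leibniz bracket $[v_1,v_2]_{\lhd,\rhd}=v_1\lhd v_2+v_1\rhd v_2=l(Tv_1)v_2+r(Tv_2)v_1$ on $\huaV$. Finally $T[v_1,v_2]_{\lhd,\rhd}=T\big(l(Tv_1)v_2+r(Tv_2)v_1\big)=[Tv_1,Tv_2]_\huaE$ is just a restatement of the relative Rota-Baxter identity, so $T$ is a Leibniz algebra homomorphism. I expect no conceptual obstacle here; the only real difficulty is bookkeeping, namely keeping straight which argument slot of $l$ and $r$ each $Tv_i$ occupies and spotting the correct regrouping for each of the three axioms.
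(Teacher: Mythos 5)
Your verification is correct: the regrouping $T(x\lhd y)+T(x\rhd y)=[Tx,Ty]_\huaE$ combined with \eqref{rep-1}, \eqref{rep-2}, \eqref{rep-3} does yield \eqref{p1}, \eqref{p2}, \eqref{p3} exactly as you describe, and the homomorphism claim is indeed just the relative Rota-Baxter identity restated. Note that the paper itself gives no proof here (the proposition is quoted from \cite{Tang-Sheng}); your direct check is the standard argument for this result, so there is nothing to add.
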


\begin{pro}\cite{Tang-Sheng}\label{o to Lei-Den}
Let $(\huaE,[\cdot,\cdot]_\huaE)$ be a Leibniz algebra. Then there is a compatible Leibniz-dendriform algebra on $\huaE$ if and only if there exists an invertible relative Rota-Baxter operator $T:\huaV\lon\huaE$ on $\huaE$ with respect to a representation $(\huaV;l,r)$.  Furthermore, the compatible Leibniz-dendriform algebra structure on $\huaE$ is given by
\begin{eqnarray*}
x\lhd y=T(l(x)T^{-1}y),\,\,\,\,x\rhd y=T(r(y)T^{-1}x),\,\,\,\,\forall x,y\in \huaE.
\end{eqnarray*}
\end{pro}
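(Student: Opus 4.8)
The plan is to prove the two implications separately, in each case reducing to Propositions \ref{LeiDen rep} and \ref{Leibniz-dendriform} recalled above; the explicit formula for the compatible structure then drops out of the construction in the ``if'' direction.

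For the ``only if'' direction, suppose $(\huaE,\lhd,\rhd)$ is a compatible Leibniz-dendriform structure on $(\huaE,[\cdot,\cdot]_\huaE)$. By Proposition \ref{LeiDen rep}, the sub-adjacent Leibniz algebra of $(\huaE,\lhd,\rhd)$ equals $(\huaE,[\cdot,\cdot]_\huaE)$ and $(\huaE;L_\lhd,R_\rhd)$ is a representation of it. I would then take $\huaV=\huaE$, $l=L_\lhd$, $r=R_\rhd$ and $T=\Id_\huaE$, and observe that the relative Rota-Baxter identity for $\Id_\huaE$ is nothing but the compatibility equation \eqref{LeiDen to Lei}: indeed $[\Id_\huaE x,\Id_\huaE y]_\huaE=[x,y]_\huaE=x\lhd y+x\rhd y=L_{\lhd x}y+R_{\rhd y}x=\Id_\huaE\big(l(\Id_\huaE x)y+r(\Id_\huaE y)x\big)$. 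Hence $\Id_\huaE$ is an invertible relative Rota-Baxter operator, and the displayed formula holds trivially with $T=\Id_\huaE$.

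For the ``if'' direction, suppose $T:\huaV\lon\huaE$ is an invertible relative Rota-Baxter operator with respect to a representation $(\huaV;l,r)$. By Proposition \ref{Leibniz-dendriform}, $\huaV$ carries the Leibniz-dendriform structure $v_1\lhd v_2=l(Tv_1)v_2$, $v_1\rhd v_2=r(Tv_2)v_1$, and $T$ is a Leibniz algebra homomorphism from the sub-adjacent Leibniz algebra $(\huaV,[\cdot,\cdot]_{\lhd,\rhd})$ to $(\huaE,[\cdot,\cdot]_\huaE)$. Since $T$ is a linear isomorphism, I would transport this structure along $T$, setting $x\lhd y\defbe T(T^{-1}x\lhd T^{-1}y)=T(l(x)T^{-1}y)$ and $x\rhd y\defbe T(T^{-1}x\rhd T^{-1}y)=T(r(y)T^{-1}x)$ for $x,y\in\huaE$; being the image of a Leibniz-dendriform algebra under a linear isomorphism, $(\huaE,\lhd,\rhd)$ is again a Leibniz-dendriform algebra, so only compatibility remains to be checked. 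This is precisely where the homomorphism property of $T$ is used: $x\lhd y+x\rhd y=T\big(T^{-1}x\lhd T^{-1}y+T^{-1}x\rhd T^{-1}y\big)=T\big([T^{-1}x,T^{-1}y]_{\lhd,\rhd}\big)=[TT^{-1}x,TT^{-1}y]_\huaE=[x,y]_\huaE$, which is exactly \eqref{LeiDen to Lei}.

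No step presents a genuine obstacle; the only point requiring care is the bookkeeping in the ``if'' direction — rewriting the transported operations purely in terms of $l$, $r$, $T$ and $T^{-1}$ so that the auxiliary space $\huaV$ disappears from the final formula, and deducing compatibility of the transported structure from the fact that $T$ is a homomorphism rather than re-verifying axioms \eqref{p1}--\eqref{p3} by hand.
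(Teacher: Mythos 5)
Your proof is correct, and since the paper only cites Proposition \ref{o to Lei-Den} from \cite{Tang-Sheng} without reproducing a proof, there is no in-paper argument to diverge from; your route is the standard one used for this result. Both directions are sound: taking $T=\Id_\huaE$ with the representation $(\huaE;L_\lhd,R_\rhd)$ turns the compatibility identity \eqref{LeiDen to Lei} into the relative Rota-Baxter identity, and in the converse direction transporting the Leibniz-dendriform structure of Proposition \ref{Leibniz-dendriform} along the linear isomorphism $T$ and invoking the homomorphism property of $T$ yields exactly the stated formulas and the compatibility with $[\cdot,\cdot]_\huaE$.
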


\section{Symplectic structures and phase spaces of Leibniz algebras}\label{sec:sym}

In this section, we introduce the notion of a phase space of a Leibniz algebra  using the symmetric symplectic structure and show that a Leibniz algebra has a phase space if and only if there is a compatible Leibniz-dendriform algebra. Moreover, we introduce the notion of a Manin triple of Leibniz-dendriform algebras and show that there is a one-to-one correspondence between Manin triples of Leibniz-dendriform algebras and phase spaces of Leibniz algebras.

\emptycomment{
\subsection{Symplectic Leibniz algebras and quadratic  Leibniz-dendriform algebras}

\begin{defi}
A {\bf symplectic  structure} on a Leibniz algebra $(\huaE,[\cdot,\cdot]_\huaE)$ is a nondegenerate
symmetric bilinear form $\huaB$ satisfying the following equality:
\begin{eqnarray}\label{symplectic}
\huaB(z,[x,y]_\huaE)=-\huaB(y,[x,z]_\huaE)+\huaB(x,[y,z]_\huaE)+\huaB(x,[z,y]_\huaE).
\end{eqnarray}
\end{defi}

\begin{rmk}
Let $\huaE$ be a Leibniz algebra and $r\in\huaE\otimes\huaE$ symmetric and nondegenerate. Then $r$ is a solution of the classical Leibniz-Yang-Baxter equation in $\huaE$ if and only if the bilinear form $\huaB$ on $\huaE$ given by
\begin{eqnarray}
\huaB(x,y):=\langle r^{-1}(x),y\rangle,~~\forall x,y\in\huaE,
\end{eqnarray}
is a symplectic structure on the Leibniz algebra $(\huaE,[\cdot,\cdot]_\huaE)$. More details are available in \cite{Tang-Sheng}.
\end{rmk}
}

\begin{defi}
A {\bf quadratic Leibniz-dendriform algebra} $(\huaL,\lhd,\rhd, \huaB)$ is a Leibniz-dendriform algebra $(\huaL,\lhd,\rhd)$ equipped with a nondegenerate symmetric bilinear form
$\huaB\in \huaL^*\otimes \huaL^*$ such that the following invariant conditions hold for
all $x,y\in\huaE$:
\begin{eqnarray}
\huaB(x\lhd y,z)&=&-\huaB(y,[x,z]_{\lhd,\rhd}),\label{Leiden to Lei_1}\\
\huaB(x\rhd y,z)&=&\huaB(x,[y,z]_{\lhd,\rhd})+\huaB(x,[z,y]_{\lhd,\rhd}).\label{Leiden to Lei_2}
\end{eqnarray}

\end{defi}

\begin{thm}\label{thm:qua-leiden-symlei}
  There is a one-to-one correspondence between symplectic Leibniz algebras and quadratic Leibniz-dendriform algebras.
\end{thm}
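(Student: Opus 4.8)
The plan is to set up an explicit correspondence in both directions and check that they are mutually inverse. Given a quadratic Leibniz-dendriform algebra $(\huaL, \lhd, \rhd, \huaB)$, I claim that $(\huaL, [\cdot,\cdot]_{\lhd,\rhd}, \huaB)$ is a symplectic Leibniz algebra: by Proposition \ref{LeiDen rep} the sub-adjacent bracket $[\cdot,\cdot]_{\lhd,\rhd}$ is a Leibniz bracket, $\huaB$ is by hypothesis nondegenerate and symmetric, so the only thing to verify is the compatibility condition \eqref{symplectic}. For that I would simply add \eqref{Leiden to Lei_1} and \eqref{Leiden to Lei_2}, using $[x,y]_{\lhd,\rhd} = x\lhd y + x\rhd y$ from \eqref{LeiDen to Lei}, to get
\[
\huaB([x,y]_{\lhd,\rhd}, z) = -\huaB(y,[x,z]_{\lhd,\rhd}) + \huaB(x,[y,z]_{\lhd,\rhd}) + \huaB(x,[z,y]_{\lhd,\rhd}),
\]
and then rewrite the left side as $\huaB(z,[x,y]_{\lhd,\rhd})$ using symmetry of $\huaB$; this is exactly \eqref{symplectic}.

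For the reverse direction, start from a symplectic Leibniz algebra $(\huaE,[\cdot,\cdot]_\huaE,\huaB)$. Since $\huaB$ is nondegenerate it induces an isomorphism $\huaB^\flat : \huaE \to \huaE^*$, $x \mapsto \huaB(x,\cdot)$. The natural guess, by analogy with the pre-Lie/symplectic-Lie story and the Rota-Baxter picture of Proposition \ref{o to Lei-Den}, is that $T := (\huaB^\flat)^{-1} : \huaE^* \to \huaE$ is an invertible relative Rota-Baxter operator with respect to a suitable representation on $\huaE^*$ — namely the dual $(\huaE^*; L^*, -L^*-R^*)$ of the regular representation, from Lemma \ref{dual rep}. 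Concretely I would define $\lhd$ and $\rhd$ on $\huaE$ by transporting the Leibniz-dendriform structure of Proposition \ref{Leibniz-dendriform} through $T$, which by Proposition \ref{o to Lei-Den} amounts to
\[
x \lhd y = T\big(L^*_x T^{-1} y\big), \qquad x \rhd y = T\big((-L^*_y - R^*_y) T^{-1} x\big).
\]
Unwinding the definition of $L^*, R^*$ and $\huaB^\flat$, these become characterized by $\huaB(x\lhd y, z) = -\huaB(y,[x,z]_\huaE)$ and $\huaB(x\rhd y, z) = \huaB(x,[y,z]_\huaE) + \huaB(x,[z,y]_\huaE)$ — i.e. \eqref{Leiden to Lei_1}–\eqref{Leiden to Lei_2} hold by construction, provided $(\lhd,\rhd)$ really is a Leibniz-dendriform structure and its sub-adjacent bracket is $[\cdot,\cdot]_\huaE$. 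The sub-adjacency $x\lhd y + x\rhd y = [x,y]_\huaE$ follows by pairing against arbitrary $z$ and invoking \eqref{symplectic}; that $(\lhd,\rhd)$ satisfies \eqref{p1}–\eqref{p3} follows from $T$ being a relative Rota-Baxter operator, which in turn is equivalent (after applying $\huaB^\flat$ and using nondegeneracy) to \eqref{symplectic} together with the Leibniz identity for $[\cdot,\cdot]_\huaE$ and the representation axioms \eqref{rep-1}–\eqref{rep-3} for the dual of the regular representation.

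The main obstacle — and the step deserving the most care — is verifying that $T = (\huaB^\flat)^{-1}$ is indeed a relative Rota-Baxter operator with respect to $(\huaE^*; L^*, -L^*-R^*)$; equivalently, that $\huaB$ satisfying \eqref{symplectic} forces the operad-theoretic identities \eqref{p1}–\eqref{p3}. This is a direct but somewhat lengthy computation: one expands $[Tv_1, Tv_2]_\huaE$ versus $T(L^*_{Tv_1}v_2 + (-L^*-R^*)_{Tv_2}v_1)$, pairs both sides with an arbitrary $\xi = \huaB^\flat(z) \in \huaE^*$, and reduces everything to instances of \eqref{symplectic} and the Leibniz identity \eqref{Leibniz}. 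I expect this to split into a few cases according to which of the axioms \eqref{rep-1}–\eqref{rep-3} gets used, mirroring the three identities \eqref{p1}–\eqref{p3}. Finally, to conclude it is a genuine bijection I would observe that the two constructions are manifestly inverse: starting from $(\huaL,\lhd,\rhd,\huaB)$, forming the symplectic Leibniz algebra and then running the reverse construction returns the same $\lhd,\rhd$ because \eqref{Leiden to Lei_1}–\eqref{Leiden to Lei_2} uniquely determine $x\lhd y$ and $x\rhd y$ from $\huaB$ and the bracket (again by nondegeneracy of $\huaB$); and conversely the bracket recovered from the constructed Leibniz-dendriform structure is $[\cdot,\cdot]_\huaE$ by sub-adjacency.
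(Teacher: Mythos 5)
Your proposal is correct and follows essentially the same route as the paper: the forward direction is the same addition of \eqref{Leiden to Lei_1}–\eqref{Leiden to Lei_2}, and the converse likewise defines $T=(\huaB^\flat)^{-1}$, shows it is an invertible relative Rota-Baxter operator with respect to $(\huaE^*;L^*,-L^*-R^*)$, and invokes Proposition \ref{o to Lei-Den} before verifying \eqref{Leiden to Lei_1}–\eqref{Leiden to Lei_2}. The only remark is that the Rota-Baxter verification you flag as "lengthy" is in fact a single short pairing computation: applying $\huaB(\cdot,z)$ to both sides of the Rota-Baxter identity reduces it exactly to \eqref{symplectic}, with no case split over \eqref{rep-1}–\eqref{rep-3}.
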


\begin{proof}
Let $(\huaL,\lhd,\rhd,\huaB)$ be a quadratic Leibniz-dendriform algebra. By \eqref{LeiDen to Lei}, \eqref{Leiden to Lei_1} and \eqref{Leiden to Lei_2}, we have
\begin{eqnarray*}
\huaB(z,[x,y]_{\lhd,\rhd})=-\huaB(y,[x,z]_{\lhd,\rhd})+\huaB(x,[y,z]_{\lhd,\rhd})+\huaB(x,[z,y]_{\lhd,\rhd}).
\end{eqnarray*}
Thus $(\huaL,[\cdot,\cdot]_{\lhd,\rhd},\huaB)$ is a symplectic Leibniz algebra.

Conversely, let $(\huaE,[\cdot,\cdot]_\huaE,\huaB)$ be a symplectic Leibniz algebra. Then there is a compatible quadratic Leibniz-dendriform algebra structure $(\huaE,\lhd,\rhd,\huaB)$ given by:
\begin{eqnarray}
\huaB(x\lhd y,z)&=&-\huaB(y,[x,z]_\huaE),\label{sym-to-LeiDen1}\\
\huaB(x\rhd y,z)&=&\huaB(x,[y,z]_\huaE)+\huaB(x,[z,y]_\huaE).\label{sym-to-LeiDen2}
\end{eqnarray}
In fact, define a linear map $T:\huaE^*\lon \huaE$ by $\huaB(T\xi,x)=\langle\xi,x\rangle$ for all $x\in \huaE$ and $\xi\in \huaE^*$. By \eqref{symplectic}, we obtain that $T$ is an invertible relative Rota-Baxter operator on $\huaE$ with respect to $(\huaE^*;L^*,-L^*-R^*)$. By Proposition \ref{o to Lei-Den}, there exists a compatible Leibniz-dendriform algebra on $\huaE$ given by
$$x\lhd y=T(L^*(x)T^{-1}y),\,\,\,\,x\rhd y=T((-L^*-R^*)(y)T^{-1}x),\,\,\,\,\forall x,y\in \huaE.$$
Then for all $x,y,z\in \huaE$, we have
\begin{eqnarray*}
\huaB(x\lhd y,z)&=&\huaB(T(L^*(x)T^{-1}y),z)=\langle L^*(x)T^{-1}y,z\rangle\\
&=&-\langle T^{-1}y,[x,z]_\huaE\rangle=-\huaB(y,[x,z]_\huaE),\\
\huaB(x\rhd y,z)&=&\huaB(T((-L^*-R^*)(y)T^{-1}x),z)=\langle-L^*(y)T^{-1}x,z\rangle+\langle-R^*(y)T^{-1}x,z\rangle\\
&=&\langle T^{-1}x,[y,z]_\huaE\rangle+\langle T^{-1}x,[z,y]_\huaE\rangle=\huaB(x,[y,z]_\huaE)+\huaB(x,[z,y]_\huaE),
\end{eqnarray*}
 which implies that $(\huaE,\lhd,\rhd,\huaB)$ is a quadratic Leibniz-dendriform algebra.
\end{proof}


Let $V$ be a vector space and $V^{*}=$ Hom $(V,\mathds{K})$ its dual space. Then there is a natural nondegenerate symmetric bilinear form $\huaB$ on
$T^{*}V=V\oplus V^{*}$ given by:
\begin{eqnarray}\label{phase space symplectic}
\huaB(x+\xi,y+\eta)=\langle x,\eta\rangle+\langle\xi,y\rangle,\,\,\,\forall x,y\in V,\,\,\xi,\eta\in V^*.
\end{eqnarray}

\begin{defi}
Let $(\huaE,[\cdot,\cdot]_\huaE)$ be a Leibniz algebra and $\huaE^*$ its dual space. If there is a Leibniz algebra structure $[\cdot,\cdot]$ on the direct sum vector space $T^*\huaE=\huaE\oplus\huaE^*$ such that $(\huaE\oplus\huaE^*,[\cdot,\cdot],\huaB)$ is a symplectic Leibniz algebra, where $\huaB$ given by \eqref{phase space symplectic}, and both $(\huaE,[\cdot,\cdot]_\huaE)$ and $(\huaE^*,[\cdot,\cdot]\mid_{\huaE^*})$ are Leibniz subalgebras of $(\huaE\oplus\huaE^*,[\cdot,\cdot])$, then the symplectic Leibniz algebra $(\huaE\oplus\huaE^*,[\cdot,\cdot],\huaB)$ is called a {\bf phase space} of the Leibniz algebra $(\huaE,[\cdot,\cdot]_\huaE)$.
\end{defi}

\begin{thm}\label{phase space and Lei-Den}
A Leibniz algebra has a phase space if and only if there is a compatible Leibniz-dendriform algebra.
\end{thm}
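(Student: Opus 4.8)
The plan is to reduce the statement to the structural results already in hand, namely Theorem~\ref{thm:qua-leiden-symlei} together with Propositions~\ref{Leibniz-dendriform} and~\ref{o to Lei-Den}. First I would prove the ``if'' direction. Suppose $(\huaE,[\cdot,\cdot]_\huaE)$ carries a compatible Leibniz-dendriform algebra structure $(\huaE,\lhd,\rhd)$, so that $[\cdot,\cdot]_\huaE=[\cdot,\cdot]_{\lhd,\rhd}$. By Proposition~\ref{LeiDen rep}, $(\huaE;L_\lhd,R_\rhd)$ is a representation of $(\huaE,[\cdot,\cdot]_\huaE)$, hence by Lemma~\ref{dual rep} its dual $(\huaE^*;L_\lhd^*,-L_\lhd^*-R_\rhd^*)$ is also a representation. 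Form the semidirect product Leibniz algebra $\huaE\ltimes_{L_\lhd^*,-L_\lhd^*-R_\rhd^*}\huaE^*$ on $\huaE\oplus\huaE^*$ using Lemma~\ref{lem:semidirectproduct}. The key computation is to check that the canonical pairing $\huaB$ of \eqref{phase space symplectic} satisfies the symplectic compatibility \eqref{symplectic} on this semidirect product; this is a direct unwinding of the bracket \eqref{semidirect product}, the definition of dual representation, and the defining axioms \eqref{p1}--\eqref{p3} of the Leibniz-dendriform algebra (equivalently, it is exactly the translation of conditions \eqref{Leiden to Lei_1}--\eqref{Leiden to Lei_2} for the quadratic structure). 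Since $\huaE$ and $\huaE^*$ are visibly subalgebras ($\huaE^*$ being abelian) and $\huaB$ is nondegenerate and symmetric, this exhibits a phase space of $\huaE$.

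For the ``only if'' direction, suppose $(\huaE\oplus\huaE^*,[\cdot,\cdot],\huaB)$ is a phase space of $\huaE$. By Theorem~\ref{thm:qua-leiden-symlei}, the symplectic Leibniz algebra $(\huaE\oplus\huaE^*,[\cdot,\cdot],\huaB)$ is equivalent to a quadratic Leibniz-dendriform algebra $(\huaE\oplus\huaE^*,\lhd,\rhd,\huaB)$ whose sub-adjacent Leibniz algebra is $(\huaE\oplus\huaE^*,[\cdot,\cdot])$. The point is then to show that the Leibniz-dendriform operations $\lhd,\rhd$ restrict to $\huaE$. Using that both $\huaE$ and $\huaE^*$ are isotropic subalgebras for $\huaB$ and that the dendriform products are recovered from the bracket and $\huaB$ by formulas of the type \eqref{sym-to-LeiDen1}--\eqref{sym-to-LeiDen2}, one checks $\huaB(x\lhd y, \xi)=0$ and $\huaB(x\rhd y,\xi)=0$ for all $x,y\in\huaE$, $\xi\in\huaE^*$ (since $\huaE$ is a subalgebra, $[x,\xi]$ and $[\xi,x]$ pair trivially against $\huaE$, making the relevant right-hand sides vanish), hence $x\lhd y, x\rhd y\in(\huaE^*)^{\perp}=\huaE$. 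Thus $(\huaE,\lhd,\rhd)$ is a Leibniz-dendriform algebra, and since $x\lhd y+x\rhd y=[x,y]=[x,y]_\huaE$, it is compatible.

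The main obstacle I expect is purely bookkeeping in the ``only if'' direction: one must be careful that the quadratic Leibniz-dendriform structure supplied by Theorem~\ref{thm:qua-leiden-symlei} is precisely the one attached to the relative Rota-Baxter operator $T:\ (\huaE\oplus\huaE^*)^*\to\huaE\oplus\huaE^*$ induced by $\huaB$, and that under the identification $(\huaE\oplus\huaE^*)^*\cong\huaE^*\oplus\huaE$ this operator swaps the two summands. Tracking these identifications correctly is what makes the isotropy of $\huaE$ and $\huaE^*$ translate into closure of $\lhd,\rhd$ on $\huaE$; once the dictionary is set up, the closure check is the short computation sketched above. An alternative, cleaner route for the ``only if'' part is to invoke Proposition~\ref{o to Lei-Den} directly: restricting the Rota-Baxter operator $T$ to $\huaE^*\subseteq (\huaE\oplus\huaE^*)^*$ gives an invertible relative Rota-Baxter operator from a representation onto $(\huaE,[\cdot,\cdot]_\huaE)$, which by that proposition is equivalent to a compatible Leibniz-dendriform structure on $\huaE$. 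I would present whichever of these two is shortest after the identifications are fixed.
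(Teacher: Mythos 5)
Your ``if'' direction is exactly the paper's argument (semidirect product $\huaE\ltimes_{L^*_\lhd,-L^*_\lhd-R^*_\rhd}\huaE^*$, direct verification of \eqref{symplectic} for the canonical pairing, $\huaE^*$ abelian), with the minor remark that the verification of \eqref{symplectic} is a pure cancellation of pairing terms and does not invoke \eqref{p1}--\eqref{p3} at that point; those axioms only enter through Proposition \ref{LeiDen rep} to make $\huaE\oplus\huaE^*$ a Leibniz algebra in the first place.

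In the ``only if'' direction your strategy is also the paper's (apply Theorem \ref{thm:qua-leiden-symlei} to the phase space and show $\lhd,\rhd$ close on $\huaE$), but the closure check as you wrote it fails. You claim $\huaB(x\lhd y,\xi)=0$ for all $\xi\in\huaE^*$ and conclude $x\lhd y\in(\huaE^*)^{\perp}=\huaE$. Both steps are wrong: for the pairing \eqref{phase space symplectic} one has $(\huaE^*)^{\perp}=\huaE^*$ and $\huaE^{\perp}=\huaE$ (each summand is Lagrangian), so orthogonality to $\huaE^*$ would place $x\lhd y$ in $\huaE^*$, not $\huaE$; and the vanishing itself is false in general, since $\huaB(x\lhd y,\xi)=-\huaB(y,[x,\xi])$ has no reason to vanish (in the semidirect product model it equals $\langle x\lhd y,\xi\rangle$, which is nonzero whenever $\lhd$ is nontrivial on $\huaE$). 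The correct computation, which is what the paper does, pairs against $\huaE$ instead: for $x,y,z\in\huaE$, \eqref{sym-to-LeiDen1} gives $\huaB(x\lhd y,z)=-\huaB(y,[x,z])=-\huaB(y,[x,z]_\huaE)=0$ because $\huaE$ is a subalgebra and is isotropic for \eqref{phase space symplectic}, and similarly $\huaB(x\rhd y,z)=0$ via \eqref{sym-to-LeiDen2}; hence $x\lhd y,\,x\rhd y\in\huaE^{\perp}=\huaE$, and $x\lhd y+x\rhd y=[x,y]_\huaE$ gives compatibility. Your proposed ``cleaner'' alternative also does not work as stated: the operator $T$ induced by $\huaB$ is, under the identification $(\huaE\oplus\huaE^*)^*\cong\huaE^*\oplus\huaE$, essentially the canonical identification, so $T|_{\huaE^*}$ is the inclusion of $\huaE^*$ into $\huaE\oplus\huaE^*$ with image $\huaE^*$, not an invertible operator onto $(\huaE,[\cdot,\cdot]_\huaE)$; moreover $\huaE^*$ need not carry a representation of $\huaE$ inside a general phase space, since $[x,\xi]$ may have a component in $\huaE$. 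So the perp argument, corrected as above, is the route to take.
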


\begin{proof}
Let $(\huaL,\lhd,\rhd)$ be a Leibniz-dendriform algebra. By Proposition \ref{LeiDen rep}, $(\huaL;L_\lhd,R_\rhd)$ is a representation of the sub-adjacent Leibniz algebra $(\huaL,[\cdot,\cdot]_{\lhd,\rhd})$. By Lemma \ref{dual rep}, $(\huaL^*;L^*_\lhd,-L^*_\lhd-R^*_\rhd)$ is a representation of $(\huaL,[\cdot,\cdot]_{\lhd,\rhd})$. Thus we have the semidirect product Leibniz algebra $\huaL\ltimes_{L^*_\lhd,-L^*_\lhd-R^*_\rhd}\huaL^*$ by Lemma \ref{lem:semidirectproduct}. For all $x,y,z\in\huaL$ and $\xi,\eta,\zeta\in\huaL^*$, we have
\begin{eqnarray*}
&&\huaB(z+\zeta,[x+\xi,y+\eta]_\ltimes)\\
&=&\huaB(z+\zeta,[x,y]_{\lhd,\rhd}+L^*_{\lhd x}\eta+(-L^*_{\lhd y}-R^*_{\rhd y})\xi)\\
&=&\langle z,L^*_{\lhd x}\eta+(-L^*_{\lhd y}-R^*_{\rhd y})\xi\rangle+\langle\zeta,[x,y]_{\lhd,\rhd}\rangle\\
&=&-\langle x\lhd z,\eta\rangle+\langle y\lhd z,\xi\rangle+\langle z\rhd y,\xi\rangle+\langle\zeta,x\lhd y\rangle+\langle\zeta,x\rhd y\rangle,\\
&&-\huaB(y+\eta,[x+\xi,z+\zeta]_\ltimes)\\
&=&-\huaB(y+\eta,[x,z]_{\lhd,\rhd}+L^*_{\lhd x}\zeta+(-L^*_{\lhd z}-R^*_{\rhd z})\xi)\\
&=&-\langle y,L^*_{\lhd x}\zeta+(-L^*_{\lhd z}-R^*_{\rhd z})\xi\rangle-\langle\eta,[x,z]_{\lhd,\rhd}\rangle\\
&=&\langle x\lhd y,\zeta\rangle-\langle z\lhd y,\xi\rangle-\langle y\rhd z,\xi\rangle-\langle\eta,x\lhd z\rangle-\langle\eta,x\rhd z\rangle,\\
&&\huaB(x+\xi,[y+\eta,z+\zeta]_\ltimes)\\
&=&\huaB(x+\xi,[y,z]_{\lhd,\rhd}+L^*_{\lhd y}\zeta+(-L^*_{\lhd z}-R^*_{\rhd z})\eta)\\
&=&\langle x,L^*_{\lhd y}\zeta+(-L^*_{\lhd z}-R^*_{\rhd z})\eta\rangle+\langle\xi,[y,z]_{\lhd,\rhd}\rangle\\
&=&-\langle y\lhd x,\zeta\rangle+\langle z\lhd x,\eta\rangle+\langle x\rhd z,\eta\rangle+\langle\xi,y\lhd z\rangle+\langle\xi,y\rhd z\rangle,\\
&&\huaB(x+\xi,[z+\zeta,y+\eta]_\ltimes)\\
&=&\huaB(x+\xi,[z,y]_{\lhd,\rhd}+L^*_{\lhd z}\eta+(-L^*_{\lhd y}-R^*_{\rhd y})\zeta)\\
&=&\langle x,L^*_{\lhd z}\eta+(-L^*_{\lhd y}-R^*_{\rhd y})\zeta\rangle+\langle\xi,[z,y]_{\lhd,\rhd}\rangle\\
&=&-\langle z\lhd x,\eta\rangle+\langle y\lhd x,\zeta\rangle+\langle x\rhd y,\zeta\rangle+\langle\xi,z\lhd y\rangle+\langle\xi,z\rhd y\rangle.
\end{eqnarray*}
Therefore $\huaB$ satisfies \eqref{symplectic} which means that $(\huaL\oplus\huaL^*,[\cdot,\cdot]_\ltimes,\huaB)$ is a symplectic Leibniz algebra.
Moreover, $(\huaL,[\cdot,\cdot]_\huaL)$ is a subalgebra of $\huaL\ltimes_{L^*_\lhd,-L^*_\lhd-R^*_\rhd}\huaL^*$ and $(\huaL^*,[\cdot,\cdot]_\ltimes\mid_{\huaL^*})$ is an abelian subalgebra of $\huaL\ltimes_{L^*_\lhd,-L^*_\lhd-R^*_\rhd}\huaL^*$. Thus, the symplectic Leibniz algebra $(\huaL\oplus\huaL^*,[\cdot,\cdot]_\ltimes,\huaB)$ is a phase space of the sub-adjacent Leibniz algebra $(\huaL,[\cdot,\cdot]_{\lhd,\rhd})$.

Conversely, let $(T^*\huaE=\huaE\oplus\huaE^*,[\cdot,\cdot],\huaB)$ be a phase space of the Leibniz algebra $(\huaE,[\cdot,\cdot]_\huaE)$. By Theorem \ref{thm:qua-leiden-symlei}, there exists a compatible Leibniz-dendriform algebra structure on $T^*\huaE$ which is given by:
\begin{eqnarray*}
\huaB((x+\xi)\lhd (y+\eta),z+\zeta)&=&-\huaB(y+\eta,[x+\xi,z+\zeta]),\\
\huaB((x+\xi)\rhd (y+\eta),z+\zeta)&=&\huaB(x+\xi,[y+\eta,z+\zeta])+\huaB(x+\xi,[z+\zeta,y+\eta]).
\end{eqnarray*}
Since $(\huaE,[\cdot,\cdot]_\huaE)$ is a subalgebra of $(T^*\huaE,[\cdot,\cdot])$, we have
\begin{eqnarray*}
\huaB(x\lhd y,z)&=&-\huaB(y,[x,z])=-\huaB(y,[x,z]_\huaE)=-\langle y,0\rangle-\langle0,[x,z]_\huaE\rangle=0,\\
\huaB(x\rhd y,z)&=&\huaB(x,[y,z])+\huaB(x,[z,y])=\huaB(x,[y,z]_\huaE)+\huaB(x,[z,y]_\huaE)=0,
\end{eqnarray*}
for all $x,y,z\in\huaE$. Thus, $x\lhd y, x\rhd y\in\huaE$, which implies that $(\huaE,\lhd{\mid_{\huaE}},\rhd{\mid_\huaE})$ is a subalgebra of the Leibniz-dendriform algebra$(T^*\huaE,\lhd,\rhd)$. Its sub-adjacent Leibniz algebra $(\huaE,[\cdot,\cdot]_{\lhd\mid_{\huaE},\rhd{\mid_\huaE}})$ is exactly the original Leibniz algebra $(\huaE,[\cdot,\cdot]_\huaE)$.
\end{proof}

\begin{cor}\label{phase to LeiDen}
Let $(T^*\huaE=\huaE\oplus\huaE^*,[\cdot,\cdot],\huaB)$ be a phase space of the Leibniz algebra $(\huaE,[\cdot,\cdot]_\huaE)$ and $(\huaE\oplus\huaE^*,\lhd,\rhd)$ the compatible Leibniz-dendriform algebra. Then both $(\huaE,\lhd{\mid_{\huaE}},\rhd{\mid_\huaE})$ and $(\huaE^*,\lhd{\mid_{\huaE^*}},\rhd{\mid_{\huaE^*}})$ are subalgebras of the Leibniz-dendriform algebra $(\huaE\oplus\huaE^*,\lhd,\rhd)$.
\end{cor}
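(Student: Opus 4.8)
The plan is to leverage the proof of Theorem \ref{phase space and Lei-Den} directly, since Corollary \ref{phase to LeiDen} is essentially the symmetric companion of what was already shown there for $\huaE$. Recall that in that proof, given a phase space $(T^*\huaE=\huaE\oplus\huaE^*,[\cdot,\cdot],\huaB)$, one invokes Theorem \ref{thm:qua-leiden-symlei} to obtain the compatible Leibniz-dendriform algebra $(\huaE\oplus\huaE^*,\lhd,\rhd)$ determined by the relations
\begin{eqnarray*}
\huaB(a\lhd b,c)&=&-\huaB(b,[a,c]),\\
\huaB(a\rhd b,c)&=&\huaB(a,[b,c])+\huaB(a,[c,b]),
\end{eqnarray*}
for all $a,b,c\in T^*\huaE$, and one checks that $\huaB(x\lhd y,z)=\huaB(x\rhd y,z)=0$ for $x,y,z\in\huaE$, whence $x\lhd y,x\rhd y\in\huaE$ because $\huaE$ is isotropic for $\huaB$ and $\huaB$ is nondegenerate (so the annihilator of $\huaE$ under $\huaB$ is exactly $\huaE$). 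The point of the corollary is that the identical argument works verbatim with $\huaE^*$ in place of $\huaE$.

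So first I would recall that $\huaE^*$ is by hypothesis a Leibniz subalgebra of $(T^*\huaE,[\cdot,\cdot])$, and that $\huaE^*$ is isotropic with respect to $\huaB$: indeed from \eqref{phase space symplectic}, $\huaB(\xi,\eta)=\langle 0,\eta\rangle+\langle\xi,0\rangle=0$ for all $\xi,\eta\in\huaE^*$. Then for $\xi,\eta,\zeta\in\huaE^*$, using that $[\xi,\zeta],[\eta,\zeta],[\zeta,\eta]\in\huaE^*$ together with the isotropy of $\huaE^*$, I would compute
\begin{eqnarray*}
\huaB(\xi\lhd\eta,\zeta)&=&-\huaB(\eta,[\xi,\zeta])=0,\\
\huaB(\xi\rhd\eta,\zeta)&=&\huaB(\xi,[\eta,\zeta])+\huaB(\xi,[\zeta,\eta])=0.
\end{eqnarray*}
Since this holds for all $\zeta\in\huaE^*$ and $\huaB$ is nondegenerate with $\huaE^*$ equal to its own $\huaB$-orthogonal complement (as $\huaE^*$ is a Lagrangian subspace of the split form $\huaB$), it follows that $\xi\lhd\eta,\xi\rhd\eta\in\huaE^*$. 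Hence $(\huaE^*,\lhd{\mid_{\huaE^*}},\rhd{\mid_{\huaE^*}})$ is closed under both operations, i.e. it is a subalgebra of the Leibniz-dendriform algebra $(\huaE\oplus\huaE^*,\lhd,\rhd)$. The claim for $\huaE$ is exactly the content of the last part of the proof of Theorem \ref{phase space and Lei-Den}, so it may simply be cited.

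I do not expect any genuine obstacle here; the only point requiring a moment's care is the step from "$\huaB(\xi\lhd\eta,\zeta)=0$ for all $\zeta\in\huaE^*$" to "$\xi\lhd\eta\in\huaE^*$", which uses the precise structure of $\huaB$ on $T^*\huaE$: the $\huaB$-orthogonal complement of $\huaE^*$ in $\huaE\oplus\huaE^*$ is $\huaE^*$ itself (and likewise for $\huaE$), a fact immediate from \eqref{phase space symplectic}. Everything else is a transcription of the argument already given for $\huaE$.
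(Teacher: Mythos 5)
Your proposal is correct and follows exactly the route the paper intends: the corollary is left to the same computation as in the converse direction of the proof of Theorem \ref{phase space and Lei-Den}, and you supply the missing parallel argument for $\huaE^*$ using that $\huaE^*$ is a Leibniz subalgebra, that it is isotropic for $\huaB$, and that its $\huaB$-orthogonal complement in $\huaE\oplus\huaE^*$ is $\huaE^*$ itself. Nothing further is needed.
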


\begin{cor}
Let $(\huaE;l,r)$ be a representation of the Leibniz algebra $(\huaE,[\cdot,\cdot]_\huaE)$ and $(\huaE^*;l^*,-l^*-r^*)$ the dual representation. If $(\huaE\oplus\huaE^*,[\cdot,\cdot]_\ltimes,\huaB)$ is a phase space of $(\huaE,[\cdot,\cdot]_\huaE)$, where $[\cdot,\cdot]_\ltimes$ is given by \eqref{semidirect product}, then
\begin{eqnarray*}
x\lhd y=l_x y,\,\,\,\,x\rhd y=r_y x,\,\,\,\,\forall x,y\in\huaE,
\end{eqnarray*}
defines a Leibniz-dendriform algebra structure on $\huaE$.
\end{cor}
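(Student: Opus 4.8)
The plan is to read this Corollary off from Theorem \ref{phase space and Lei-Den} and Corollary \ref{phase to LeiDen}, specialized to the phase space $\huaE\ltimes_{l^*,-l^*-r^*}\huaE^*$. By hypothesis $(\huaE\oplus\huaE^*,[\cdot,\cdot]_\ltimes,\huaB)$ is a symplectic Leibniz algebra, so Theorem \ref{thm:qua-leiden-symlei} equips $\huaE\oplus\huaE^*$ with a compatible Leibniz-dendriform structure, namely the one determined by \eqref{sym-to-LeiDen1}--\eqref{sym-to-LeiDen2}, and Corollary \ref{phase to LeiDen} tells us that its restriction $(\huaE,\lhd|_\huaE,\rhd|_\huaE)$ is a Leibniz-dendriform subalgebra, hence in particular a Leibniz-dendriform algebra. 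Thus all that remains is to identify the restricted operations with $(x,y)\mapsto l_xy$ and $(x,y)\mapsto r_yx$.

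For this, fix $x,y\in\huaE$ and an arbitrary $z+\zeta\in\huaE\oplus\huaE^*$. Using \eqref{semidirect product} for the dual representation $(\huaE^*;l^*,-l^*-r^*)$ of Lemma \ref{dual rep} one has $[x,z+\zeta]_\ltimes=[x,z]_\huaE+l^*_x\zeta$; since $\huaE$ is isotropic for $\huaB$ (immediate from \eqref{phase space symplectic}), the summand $[x,z]_\huaE\in\huaE$ contributes nothing to $\huaB(y,-)$, so from \eqref{sym-to-LeiDen1},
\[
\huaB(x\lhd y,z+\zeta)=-\huaB(y,[x,z]_\huaE+l^*_x\zeta)=-\langle l^*_x\zeta,y\rangle=\langle\zeta,l_xy\rangle=\huaB(l_xy,z+\zeta),
\]
and nondegeneracy of $\huaB$ forces $x\lhd y=l_xy$. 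The computation for $\rhd$ is the same in spirit: expanding $[y,z+\zeta]_\ltimes=[y,z]_\huaE+l^*_y\zeta$ and $[z+\zeta,y]_\ltimes=[z,y]_\huaE+(-l^*_y-r^*_y)\zeta$ in \eqref{sym-to-LeiDen2}, discarding the $\huaE$-valued brackets by isotropy, the two contributions of $l^*_y\zeta$ cancel and one is left with $\huaB(x\rhd y,z+\zeta)=\langle\zeta,r_yx\rangle=\huaB(r_yx,z+\zeta)$, whence $x\rhd y=r_yx$.

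No real difficulty arises; the argument is pure bookkeeping on top of Theorem \ref{phase space and Lei-Den}. The only place demanding attention is the sign and pairing conventions of the dual representation: it is the component $-l^*-r^*$ (not $r^*$) that acts on the $\huaE^*$-slot of the first entry of $[\cdot,\cdot]_\ltimes$, and this is exactly what produces the cancellation $-\langle\zeta,l_yx\rangle+\langle\zeta,l_yx\rangle+\langle\zeta,r_yx\rangle=\langle\zeta,r_yx\rangle$ in the $\rhd$-computation; keeping the orientation of $\langle\cdot,\cdot\rangle$ between $\huaE$ and $\huaE^*$ consistent with \eqref{phase space symplectic} is the one bit of care needed.
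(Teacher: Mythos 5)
Your proof is correct and follows essentially the same route as the paper: invoke the compatible Leibniz-dendriform structure on $T^*\huaE$ from Theorem \ref{thm:qua-leiden-symlei} together with Corollary \ref{phase to LeiDen}, then identify $\lhd,\rhd$ on $\huaE$ via \eqref{sym-to-LeiDen1}, \eqref{sym-to-LeiDen2}, \eqref{phase space symplectic} and the dual-representation formulas. The only (harmless) difference is that you pair against arbitrary $z+\zeta\in\huaE\oplus\huaE^*$ and conclude by nondegeneracy, whereas the paper pairs only against $\xi\in\huaE^*$, implicitly using that $x\lhd y,\,x\rhd y\in\huaE$.
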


\begin{proof}
By \eqref{sym-to-LeiDen1}, \eqref{sym-to-LeiDen2} and \eqref{phase space symplectic}, for all $x,y\in\huaE$ and $\xi\in\huaE^*$, we have
\begin{eqnarray*}
\langle x\lhd y,\xi\rangle&=&\huaB(x\lhd y,\xi)=-\huaB(y,[x,\xi]_\ltimes)=-\huaB(y,l^*_x \xi)=-\langle y,l^*_x \xi\rangle=\langle l_x y,\xi\rangle,\\
\langle x\rhd y,\xi\rangle&=&\huaB(x\rhd y,\xi)=\huaB(x,[y,\xi]_\ltimes)+\huaB(x,[\xi,y]_\ltimes)=\huaB(x,l^*_y \xi)+\huaB(x,(-l^*_y-r^*_y)\xi)\\
&=&\langle x,l^*_y \xi\rangle-\langle x,l^*_y \xi\rangle-\langle x,r^*_y \xi\rangle=\langle r_y x,\xi\rangle.
\end{eqnarray*}
Therefore, $x\lhd y=l_x y$ and $x\rhd y=r_y x$.
\end{proof}

\begin{defi}
A {\bf Manin triple of Leibniz-dendriform algebras} is a triple $(\mathfrak{L};\huaL,\huaL')$, where
\begin{itemize}
  \item $(\mathfrak{L},\lhd,\rhd,\huaB)$ is a quadratic Leibniz-dendriform algebra;
  \item both $\huaL$ and $\huaL'$ are isotropic subalgebras of $(\mathfrak{L},\lhd,\rhd)$, that is, $\huaB(x_1,y_1)=\huaB(x_2,y_2)=0$ for $x_1,y_1\in\huaL$ and $x_2,y_2\in\huaL'$;
  \item $\mathfrak{L}=\huaL\oplus\huaL'$ as vector spaces.
\end{itemize}
\end{defi}

In a Manin triple $(\mathfrak{L};\huaL,\huaL')$ of  Leibniz-dendriform algebras, since the symmetric bilinear form $\huaB$ is nondegenerate, $\huaL'$ can be identified with $\huaL^*$ via
$$\langle\xi,x\rangle\triangleq\huaB(\xi,x),\,\,\,\,\forall x\in\huaL,\,\,\xi\in\huaL'.$$
Thus, $\mathfrak{L}$ is isomorphic to $\huaL\oplus\huaL^*$ naturally and the bilinear form $\huaB$ is exactly given by \eqref{phase space symplectic}. 

\begin{thm}
There is a one-to-one correspondence between Manin triples of Leibniz-dendriform algebras and phase spaces of Leibniz algebras.
\end{thm}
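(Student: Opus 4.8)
The plan is to assemble the correspondence from the pieces already in hand: Theorem~\ref{thm:qua-leiden-symlei} (symplectic Leibniz algebras $\leftrightarrow$ quadratic Leibniz-dendriform algebras), Corollary~\ref{phase to LeiDen} (in a phase space, $\huaE$ and $\huaE^*$ are Leibniz-dendriform subalgebras), the construction in Theorem~\ref{phase space and Lei-Den}, and the identification $\huaL'\cong\huaL^*$ recorded just above the statement. No genuinely new computation is needed; the content is matching the three defining clauses of a Manin triple against the three ingredients of a phase space.

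\textbf{From a phase space to a Manin triple.} Let $(T^*\huaE=\huaE\oplus\huaE^*,[\cdot,\cdot],\huaB)$ be a phase space of $(\huaE,[\cdot,\cdot]_\huaE)$, with $\huaB$ given by \eqref{phase space symplectic}. By Theorem~\ref{thm:qua-leiden-symlei}, this symplectic Leibniz algebra carries the compatible quadratic Leibniz-dendriform structure $(T^*\huaE,\lhd,\rhd,\huaB)$ determined by \eqref{sym-to-LeiDen1}--\eqref{sym-to-LeiDen2}. By Corollary~\ref{phase to LeiDen}, $\huaE$ and $\huaE^*$ are subalgebras of $(T^*\huaE,\lhd,\rhd)$, and from the explicit form \eqref{phase space symplectic} both are isotropic for $\huaB$. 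Hence $(T^*\huaE;\huaE,\huaE^*)$ is a Manin triple of Leibniz-dendriform algebras.

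\textbf{From a Manin triple to a phase space.} Let $(\mathfrak{L};\huaL,\huaL')$ be a Manin triple. Nondegeneracy of $\huaB$ lets us identify $\huaL'$ with $\huaL^*$ via $\langle\xi,x\rangle=\huaB(\xi,x)$, so $\mathfrak{L}\cong\huaL\oplus\huaL^*$ and $\huaB$ becomes exactly \eqref{phase space symplectic}. Applying Theorem~\ref{thm:qua-leiden-symlei} to $(\mathfrak{L},\lhd,\rhd,\huaB)$ shows $(\mathfrak{L},[\cdot,\cdot]_{\lhd,\rhd},\huaB)$ is a symplectic Leibniz algebra; since $\huaL$ and $\huaL'$ are closed under $\lhd$ and $\rhd$, they are closed under $[\cdot,\cdot]_{\lhd,\rhd}$ by \eqref{LeiDen to Lei}, hence are Leibniz subalgebras. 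Therefore $(\mathfrak{L},[\cdot,\cdot]_{\lhd,\rhd},\huaB)$ is a phase space of $(\huaL,[\cdot,\cdot]_{\lhd,\rhd}\mid_{\huaL})$, which is precisely the phase space produced by Theorem~\ref{phase space and Lei-Den} from the Leibniz-dendriform algebra $(\huaL,\lhd\mid_{\huaL},\rhd\mid_{\huaL})$.

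\textbf{Mutual inverseness, and the main point.} Finally one checks these two assignments are inverse. Starting from a phase space $(T^*\huaE,[\cdot,\cdot],\huaB)$ and passing to $(T^*\huaE;\huaE,\huaE^*)$ and back: the sub-adjacent bracket of the associated Leibniz-dendriform algebra is $x\lhd y+x\rhd y$, which by the proof of Theorem~\ref{thm:qua-leiden-symlei} equals the original $[\cdot,\cdot]$, so the recovered phase space is the original one. Starting from a Manin triple $(\mathfrak{L};\huaL,\huaL')$, the symplectic Leibniz algebra $(\mathfrak{L},[\cdot,\cdot]_{\lhd,\rhd},\huaB)$ has by Theorem~\ref{thm:qua-leiden-symlei} a compatible quadratic Leibniz-dendriform structure, characterized by \eqref{Leiden to Lei_1}--\eqref{Leiden to Lei_2}; the original $\lhd,\rhd$ satisfy exactly these relations, so the recovered structure is the original one. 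The only point requiring a word of care — the main (minor) obstacle — is this uniqueness: the conditions \eqref{Leiden to Lei_1}--\eqref{Leiden to Lei_2} (equivalently \eqref{sym-to-LeiDen1}--\eqref{sym-to-LeiDen2}) determine $\lhd$ and $\rhd$ uniquely from $[\cdot,\cdot]_{\lhd,\rhd}$ and $\huaB$, which is immediate since $\huaB$ is nondegenerate.
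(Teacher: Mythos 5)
Your proposal is correct and follows essentially the same route as the paper: both directions are obtained by combining Theorem~\ref{thm:qua-leiden-symlei}, Corollary~\ref{phase to LeiDen}, the isotropy read off from \eqref{phase space symplectic}, and the identification $\huaL'\cong\huaL^*$ recorded before the statement. Your extra check that the two assignments are mutually inverse (via nondegeneracy of $\huaB$ forcing uniqueness of $\lhd,\rhd$ in \eqref{sym-to-LeiDen1}--\eqref{sym-to-LeiDen2}) is a valid refinement of what the paper leaves implicit.
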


\begin{proof}
Let $(\huaL\oplus\huaL^*;\huaL,\huaL^*)$ be a Manin triple of Leibniz-dendriform algebras. Here $(\huaL\oplus\huaL^*,\lhd,\rhd,\huaB)$ is a quadratic Leibniz-dendriform algebra and $\huaB$ is given by \eqref{phase space symplectic}. Denote by $(\huaL,\lhd{\mid_\huaL},\rhd{\mid_\huaL})$ and $(\huaL^*,\lhd{\mid_{\huaL^*}},\rhd{\mid_{\huaL^*}})$ the subalgebras of $(\huaL\oplus\huaL^*,\lhd,\rhd)$, and denote by $(\huaL,[\cdot,\cdot]_{\lhd{\mid_\huaL},\rhd{\mid_\huaL}})$ and $(\huaL^*,[\cdot,\cdot]_{\lhd{\mid_{\huaL^*}},\rhd{\mid_{\huaL^*}}})$ the corresponding sub-adjacent Leibniz algebras, which are obviously Leibniz subalgebras of $(\huaL\oplus\huaL^*,[\cdot,\cdot]_{\lhd,\rhd})$.
By Theorem \ref{thm:qua-leiden-symlei}, $(\huaL\oplus\huaL^*,[\cdot,\cdot]_{\lhd,\rhd},\huaB)$ is a symplectic Leibniz algebra, which is a phase space of the Leibniz algebra $(\huaL,[\cdot,\cdot]_{\lhd{\mid_\huaL},\rhd{\mid_\huaL}})$.

Conversely, let $(\huaE\oplus\huaE^*,[\cdot,\cdot],\huaB)$ be a phase space of the Leibniz algebra $(\huaE,[\cdot,\cdot]_\huaE)$. By Theorem \ref{thm:qua-leiden-symlei}, there exists a compatible Leibniz-dendriform algebra structure $\lhd,\rhd$ on $\huaE\oplus\huaE^*$ given by \eqref{sym-to-LeiDen1} and \eqref{sym-to-LeiDen2} such that $(\huaE\oplus\huaE^*,\lhd,\rhd,\huaB)$ is a quadratic Leibniz-dendriform algebra. By Corollary \ref{phase to LeiDen}, $(\huaE,\lhd{\mid_{\huaE}},\rhd{\mid_{\huaE}})$ and $(\huaE^*,\lhd{\mid_{\huaE^*}},\rhd{\mid_{\huaE^*}})$ are Leibniz-dendriform subalgebras of $(\huaE\oplus\huaE^*,\lhd,\rhd)$. Since $\huaB$ is given by \eqref{phase space symplectic}, then $\huaE$ and $\huaE^*$ are isotropic clearly. Thus $(\huaE\oplus\huaE^*;\huaE,\huaE^*)$ is a Manin triple of Leibniz-dendriform algebras.
\end{proof}

\begin{ex}\label{sym example 1}
Consider the $4$-dimensional Leibniz algebra $({\huaE},[\cdot,\cdot]_{\huaE})$ given with respect to the basis $\{e_1,e_2,e_3,e_4\}$  by:
\begin{eqnarray*}
[e_1,e_3]_{\huaE}=2e_4.
\end{eqnarray*}
Then $\huaB\in{\huaE}^*\otimes{\huaE}^*$
\emptycomment{
\begin{eqnarray*}
\huaB=\sum_{i,j=1}^4 b_{ij}{e_i}^*\otimes {e_j}^*,\,\,\,b_{ij}=b_{ji}\in\mathds{K},
\end{eqnarray*}}
is a symplectic structure if and only if the matrix of $\huaB$ with respect to the above basis is of the following form:
\begin{center}
$\begin{pmatrix}
* & * & * & *\\
* & * & * & 0\\
* & * & * & 0\\
* & 0 & 0 & 0
\end{pmatrix}$,
\end{center}
where the matrix is nondegenerate and symmetric.
\emptycomment{
$b_{24}=b_{42}=b_{34}=b_{43}=b_{44}=0$, where $\{{e_1}^*,{e_2}^*,{e_3}^*,{e_4}^*\}$ is the dual basis.}
\end{ex}

\begin{ex}
The special linear Lie algebra $\mathfrak{sl_n}(\mathds{C})$ with its Killing form, i.e. $\huaB(x,y)=$tr$(\ad_x\circ\ad_y)$ is a symplectic Leibniz algebra.
\end{ex}

\begin{ex}
Let $T:\huaV\rightarrow\huaE$ be a relative Rota-Baxter operator on $\huaE$ with respect to the representation $(\huaV;l,r)$. By Proposition \ref{Leibniz-dendriform}, we obtain a Leibniz-dendriform algebra $(\huaV,\lhd,\rhd)$. By Theorem \ref{phase space and Lei-Den}, $(\huaV\oplus\huaV^*,[\cdot,\cdot]_\ltimes,\huaB)$ is a symplectic Leibniz algebra, where $\huaB$ is given by \eqref{phase space symplectic}, $[\cdot,\cdot]_\ltimes$ is given by
\begin{eqnarray*}
[u+\alpha,v+\beta]_\ltimes=l(Tu)v+r(Tv)u+l^*(Tu)\beta-l^*(Tv)\alpha-r^*(Tv)\alpha,\,\,\,\forall u,v\in\huaV,\alpha,\beta\in\huaV^*.
\end{eqnarray*}
\end{ex}

\section{Product structures on Leibniz algebras}\label{sec:pro}

In this section, we introduce the notion of a product structure on a Leibniz algebra using the Nijenhuis condition as the integrability condition. We find two special product structures, each of them gives a special decomposition of the original Leibniz algebra.

\begin{defi}\label{def of product}
Let $(\huaE,[\cdot,\cdot]_{\huaE})$ be a Leibniz algebra. An {\bf almost product structure} on $\huaE$ is a linear endomorphism $E:\huaE\rightarrow\huaE$ satisfying $E^2=\Id$. A {\bf product structure} on $\huaE$ is an almost product structure $E$ satisfying the following integrability condition:
\begin{eqnarray}\label{product str}
E[x,y]_{\huaE}=[Ex,y]_{\huaE}+[x,Ey]_{\huaE}-E[Ex,Ey]_{\huaE},\,\,\,\forall x,y\in\huaE.
\end{eqnarray}
\end{defi}

\begin{rmk}
One can understand a product structure on a Leibniz algebra as a Nijenhuis operator $E$ satisfying $E^2=\Id$. See \cite{Carinena} for more details about Nijenhuis operators on Leibniz algebras.
\end{rmk}

\begin{thm}\label{product to decomposition}
Let $(\huaE,[\cdot,\cdot]_{\huaE})$ be a Leibniz algebra. Then $\huaE$ has a product structure if and only if $\huaE$ admits a decomposition:
\begin{eqnarray}
\huaE=\huaE_+\oplus\huaE_-,
\end{eqnarray}
where $\huaE_+$ and $\huaE_-$ are subalgebras of $\huaE$.
\end{thm}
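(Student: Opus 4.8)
The plan is to establish both directions by exploiting the eigenspace decomposition of an idempotent endomorphism. For the "only if" direction, suppose $E$ is a product structure on $\huaE$. Since $E^2 = \Id$ and we work over a field of characteristic $0$, $E$ is diagonalizable with eigenvalues $\pm 1$; set $\huaE_+ = \Ker(E - \Id)$ and $\huaE_- = \Ker(E + \Id)$, so that $\huaE = \huaE_+ \oplus \huaE_-$ as vector spaces. It remains to check that each $\huaE_\pm$ is closed under $[\cdot,\cdot]_\huaE$. Take $x, y \in \huaE_+$, so $Ex = x$ and $Ey = y$. Plugging into the integrability condition \eqref{product str} gives $E[x,y]_\huaE = [x,y]_\huaE + [x,y]_\huaE - E[x,y]_\huaE$, i.e. $2E[x,y]_\huaE = 2[x,y]_\huaE$, hence $[x,y]_\huaE \in \huaE_+$. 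Similarly for $x, y \in \huaE_-$ one has $Ex = -x$, $Ey = -y$, and \eqref{product str} yields $E[x,y]_\huaE = -[x,y]_\huaE - [x,y]_\huaE - E[x,y]_\huaE$, so $2E[x,y]_\huaE = -2[x,y]_\huaE$ and $[x,y]_\huaE \in \huaE_-$. Thus $\huaE_+$ and $\huaE_-$ are subalgebras.

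For the converse, suppose $\huaE = \huaE_+ \oplus \huaE_-$ with $\huaE_+$ and $\huaE_-$ subalgebras. Define $E : \huaE \to \huaE$ by $E(x_+ + x_-) = x_+ - x_-$ for $x_\pm \in \huaE_\pm$; clearly $E^2 = \Id$, so $E$ is an almost product structure. To verify \eqref{product str}, by bilinearity of both sides it suffices to check it on pairs $x, y$ each lying in $\huaE_+$ or $\huaE_-$. There are four cases. If $x, y \in \huaE_+$, then $[x,y]_\huaE \in \huaE_+$, so $E[x,y]_\huaE = [x,y]_\huaE$ while $[Ex,y]_\huaE + [x,Ey]_\huaE - E[Ex,Ey]_\huaE = [x,y]_\huaE + [x,y]_\huaE - E[x,y]_\huaE = [x,y]_\huaE$; the two sides agree. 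If $x, y \in \huaE_-$, then $[x,y]_\huaE \in \huaE_-$ and $E[x,y]_\huaE = -[x,y]_\huaE$, while the right-hand side is $-[x,y]_\huaE - [x,y]_\huaE - E[x,y]_\huaE = -2[x,y]_\huaE + [x,y]_\huaE = -[x,y]_\huaE$; again equal. If $x \in \huaE_+$ and $y \in \huaE_-$, then $Ex = x$, $Ey = -y$, so the right-hand side is $[x,y]_\huaE - [x,y]_\huaE - E[x,-y]_\huaE = E[x,y]_\huaE$, which matches the left-hand side; the mixed case $x \in \huaE_-$, $y \in \huaE_+$ is symmetric. Hence \eqref{product str} holds for all $x,y$ and $E$ is a product structure.

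Since the underlying arguments in both directions are short case checks with no serious algebraic obstruction, there is no real "hard part" here; the only point requiring a standing hypothesis is the diagonalizability of $E$ in the first direction, which is why the convention $\mathrm{char}\,\K = 0$ matters (it guarantees $2$ is invertible, and more precisely that $x^2 - 1$ splits into distinct linear factors). The proof is essentially a translation between the language of idempotents and the language of direct-sum decompositions, and the integrability condition \eqref{product str} is precisely what is needed to make the splitting compatible with the bracket.
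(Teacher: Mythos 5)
Your proof is correct and follows essentially the same route as the paper: eigenspace decomposition of $E$ for the forward direction, the map $E(x_++x_-)=x_+-x_-$ for the converse, and a direct case check of \eqref{product str} (you simply spell out the cases the paper dismisses with ``similarly''). No issues.
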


\begin{proof}
Let $E$ be a product structure on $\huaE$. By $E^2=\Id$, we have $\huaE=\huaE_+\oplus\huaE_-$, where $\huaE_+$ and $\huaE_-$ are the eigenspaces of $\huaE$ associated to the eigenvalues $\pm1$. Since
\begin{eqnarray*}
E[x_1,x_2]_{\huaE}&=&[Ex_1,x_2]_{\huaE}+[x_1,,Ex_2]_{\huaE}-E[Ex_1,Ex_2]_{\huaE}=[x_1,x_2]_{\huaE}+[x_1,x_2]_{\huaE}-E[x_1,x_2]_{\huaE}\\
&=&2[x_1,x_2]_{\huaE}-E[x_1,x_2]_{\huaE},
\end{eqnarray*}
for all $x_1,x_2\in\huaE_+$, we have $[x_1,x_2]_{\huaE}\in\huaE_+$, which implies that $\huaE_+$ is a subalgebra. In the same way, we can show that $\huaE_-$ is a subalgebra.

Conversely,  define a linear endomorphism $E:\huaE\rightarrow\huaE$ by
\begin{eqnarray}\label{decomposition to product}
E(x+\xi)=x-\xi,\,\,\,\forall x\in\huaE_+,\,\xi\in\huaE_-.
\end{eqnarray}
Obviously $E^2=\Id$. Since $\huaE_+$ is a subalgebra of $\huaE$, for all $x_1,x_2\in\huaE_+$, we have
\begin{eqnarray*}
[Ex_1,x_2]_{\huaE}+[x_1,Ex_2]_{\huaE}-E[Ex_1,Ex_2]_{\huaE}&=&[x_1,x_2]_{\huaE}+[x_1,x_2]_{\huaE}-E[x_1,x_2]_{\huaE}\\
&=&[x_1,x_2]_{\huaE}+[x_1,x_2]_{\huaE}-[x_1,x_2]_{\huaE}=[x_1,x_2]_{\huaE}\\
&=&E[x_1,x_2]_{\huaE},
\end{eqnarray*}
which implies that (\ref{product str}) holds for all $x_1,x_2\in\huaE_+$. Similarly, we can show that (\ref{product str}) holds for all $x,y\in\huaE$. Thus, $E$ is a product structure on $\huaE$.
\end{proof}

\begin{pro}\label{strict product com pro}
Let $(\huaE,[\cdot,\cdot]_\huaE)$ be a Leibniz algebra. Then
\begin{enumerate}[{\rm (i)}]
  \item An almost product structure $E$ on $\huaE$ satisfying
   \begin{eqnarray}\label{product condition 1}
    E[x,y]_\huaE=[Ex,y]_\huaE,\,\,\,\forall x,y\in\huaE,
   \end{eqnarray}
   if and only if $\huaE$ admits a decomposition: $\huaE=\huaE_+\oplus\huaE_-,$ where $\huaE_+$ and $\huaE_-$ are subalgebras of $\huaE$ such that $[\huaE_+,\huaE_-]_{\huaE}\subset\huaE_+$ and $[\huaE_{-},\huaE_+]_{\huaE}\subset\huaE_{-}$.
  \item An almost product structure $E$ on $\huaE$ satisfying
   \begin{eqnarray}\label{product condition 2}
    E[x,y]_\huaE=[x,Ey]_\huaE,\,\,\,\forall x,y\in\huaE,
   \end{eqnarray}
   if and only if $\huaE$ admits a decomposition: $\huaE=\huaE_+\oplus\huaE_-,$ where $\huaE_+$ and $\huaE_-$ are subalgebras of $\huaE$ such that $[\huaE_+,\huaE_-]_{\huaE}\subset\huaE_-$ and $[\huaE_{-},\huaE_+]_{\huaE}\subset\huaE_{+}$.
\end{enumerate}
\end{pro}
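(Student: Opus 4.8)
The plan is to run the eigenspace argument already used in the proof of Theorem \ref{product to decomposition}, but now keeping track of the mixed brackets $[\huaE_+,\huaE_-]_\huaE$ and $[\huaE_-,\huaE_+]_\huaE$ separately. Since $E^2=\Id$ in both parts, we have the decomposition $\huaE=\huaE_+\oplus\huaE_-$ into the $\pm 1$ eigenspaces of $E$, and a vector $v$ lies in $\huaE_\pm$ exactly when $Ev=\pm v$; this is the only structural fact we need.

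For the forward direction of (i) I would evaluate \eqref{product condition 1} on homogeneous elements. Taking $x_1,x_2\in\huaE_+$ gives $E[x_1,x_2]_\huaE=[x_1,x_2]_\huaE$, so $\huaE_+$ is a subalgebra; taking $x\in\huaE_-$ and $y\in\huaE$ arbitrary gives $E[x,y]_\huaE=[Ex,y]_\huaE=-[x,y]_\huaE$, hence $[x,y]_\huaE\in\huaE_-$, which at once yields both that $\huaE_-$ is a subalgebra and that $[\huaE_-,\huaE_+]_\huaE\subset\huaE_-$; finally taking $x\in\huaE_+$, $y\in\huaE_-$ gives $E[x,y]_\huaE=[x,y]_\huaE$, i.e. $[\huaE_+,\huaE_-]_\huaE\subset\huaE_+$. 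For the converse I would define $E$ by \eqref{decomposition to product}, so $E(x+\xi)=x-\xi$ with $x\in\huaE_+$, $\xi\in\huaE_-$, note $E^2=\Id$, and verify \eqref{product condition 1} by expanding both sides on $a=x_1+\xi_1$, $b=x_2+\xi_2$: the hypotheses place $[x_1,x_2]_\huaE,[x_1,\xi_2]_\huaE\in\huaE_+$ and $[\xi_1,x_2]_\huaE,[\xi_1,\xi_2]_\huaE\in\huaE_-$, so $E[a,b]_\huaE=[x_1,x_2]_\huaE+[x_1,\xi_2]_\huaE-[\xi_1,x_2]_\huaE-[\xi_1,\xi_2]_\huaE$, which is exactly $[Ea,b]_\huaE=[x_1-\xi_1,x_2+\xi_2]_\huaE$.

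Part (ii) is entirely parallel, with the roles of the two arguments interchanged: evaluating \eqref{product condition 2} on homogeneous elements shows $\huaE_+$ is a subalgebra, that $[x,y]_\huaE\in\huaE_-$ whenever $y\in\huaE_-$ (giving both that $\huaE_-$ is a subalgebra and that $[\huaE_+,\huaE_-]_\huaE\subset\huaE_-$), and that $[\huaE_-,\huaE_+]_\huaE\subset\huaE_+$; the converse again uses the map \eqref{decomposition to product} and the same four-term expansion, now compared with $[a,Eb]_\huaE=[x_1+\xi_1,x_2-\xi_2]_\huaE$. I do not expect a genuine obstacle: the only point requiring care is the bookkeeping of the mixed brackets, since the two inclusions $[\huaE_+,\huaE_-]_\huaE\subset\huaE_\pm$ and $[\huaE_-,\huaE_+]_\huaE\subset\huaE_\mp$ swap between (i) and (ii), while the rest is a routine bilinear expansion.
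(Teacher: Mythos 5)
Your proof is correct and follows essentially the same eigenspace argument as the paper: decompose $\huaE$ into the $\pm1$ eigenspaces of $E$, evaluate the one-sided condition \eqref{product condition 1} (resp.\ \eqref{product condition 2}) on homogeneous elements for the forward direction, and use the map \eqref{decomposition to product} together with the four-term bilinear expansion for the converse. The only cosmetic difference is that the paper first verifies that such an $E$ satisfies the integrability condition and then invokes Theorem \ref{product to decomposition} to conclude that $\huaE_+$ and $\huaE_-$ are subalgebras, whereas you read the subalgebra property and the mixed-bracket inclusions directly off the eigenvector evaluation; both routes are valid.
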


\begin{proof}
 (i) By \eqref{product condition 1} and $E^2=\Id$, we have
\begin{eqnarray*}
[Ex,y]_{\huaE}+[x,Ey]_{\huaE}-E[Ex,Ey]_{\huaE}&=&[Ex,y]_{\huaE}+[x,Ey]_{\huaE}-[E^2 x,Ey]_{\huaE}\\
&=&[Ex,y]_{\huaE}+[x,Ey]_{\huaE}-[x,Ey]_{\huaE}\\
&=&E[x,y]_\huaE,
\end{eqnarray*}
which implies that $E$ is a product structure on $(\huaE,[\cdot,\cdot]_\huaE)$. Therefore, $\huaE$ admits a decomposition: $\huaE=\huaE_+\oplus\huaE_-,$ where $\huaE_+$ and $\huaE_-$ are subalgebras of $\huaE$ by Theorem \ref{product to decomposition}. For all $x\in\huaE_+$ and $\xi\in\huaE_-$, we have $E[x,\xi]_{\huaE}=[Ex,\xi]_{\huaE}=[x,\xi]_{\huaE}$, which implies that $[\huaE_+,\huaE_-]_{\huaE}\subset\huaE_+$. In the same way, we obtain $[\huaE_{-},\huaE_+]_{\huaE}\subset\huaE_{-}$.

Conversely, by Theorem \ref{product to decomposition}, we obtain a product structure $E$ on $\huaE$ by (\ref{decomposition to product}). Since $\huaE_+$ and $\huaE_-$ are subalgebras, $[\huaE_+,\huaE_-]_{\huaE}\subset\huaE_+$ and $[\huaE_{-},\huaE_+]_{\huaE}\subset\huaE_{-}$, we have
\begin{eqnarray*}
E[x+\xi,y+\eta]_\huaE&=&E([x,y]_\huaE+[x,\eta]_\huaE+[\xi,y]_\huaE+[\xi,\eta]_\huaE)=[x,y]_\huaE+[x,\eta]_\huaE-[\xi,y]_\huaE-[\xi,\eta]_\huaE\\
&=&[x-\xi,y+\eta]_\huaE=[E(x+\xi),y+\eta]_\huaE,
\end{eqnarray*}
for all $x,y\in\huaE_+$ and $\xi,\eta\in\huaE_-$.

 (ii) The proof is similar to (i), which is left to readers.
\end{proof}

\begin{defi}
An almost product structure on a Leibniz algebra $(\huaE,[\cdot,\cdot]_\huaE)$ is called a {\bf strict product structure} if  \eqref{product condition 1} and \eqref{product condition 2} hold.
\end{defi}

\begin{pro}
Let $(\huaE,[\cdot,\cdot]_\huaE)$ be a Leibniz algebra. Then there exists a strict product structure on $(\huaE,[\cdot,\cdot]_\huaE)$ if and only if $\huaE$ admits a decomposition:
$$\huaE=\huaE_+\oplus\huaE_-,$$
where $\huaE_+$ and $\huaE_-$ are subalgebras of $\huaE$ such that $[\huaE_+,\huaE_-]_{\huaE}=[\huaE_-,\huaE_+]_{\huaE}=0$, that is, $\huaE$ is a Leibniz algebra direct sum of $\huaE_+$ and $\huaE_-$.
\end{pro}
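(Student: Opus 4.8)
The plan is to obtain this statement as an immediate consequence of Proposition \ref{strict product com pro}, by applying both of its parts to one and the same almost product structure. The key observation is that a strict product structure $E$ satisfies \emph{both} \eqref{product condition 1} and \eqref{product condition 2}, and that its $\pm1$-eigenspace decomposition is the same decomposition featured in parts (i) and (ii); intersecting the two sets of bracket-inclusions produced by those parts is what forces the cross brackets to vanish.

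For the ``only if'' direction, I would start from a strict product structure $E$, so that $E^2=\Id$ and both \eqref{product condition 1} and \eqref{product condition 2} hold. Let $\huaE_+$ and $\huaE_-$ be the eigenspaces of $E$ for $+1$ and $-1$; then $\huaE=\huaE_+\oplus\huaE_-$ with $\huaE_+\cap\huaE_-=0$, and by Theorem \ref{product to decomposition} both $\huaE_+$ and $\huaE_-$ are subalgebras. Part (i) of Proposition \ref{strict product com pro}, read for this particular $E$, gives $[\huaE_+,\huaE_-]_\huaE\subset\huaE_+$ and $[\huaE_-,\huaE_+]_\huaE\subset\huaE_-$, while part (ii), read for the same $E$ and hence for the same eigenspace decomposition, gives $[\huaE_+,\huaE_-]_\huaE\subset\huaE_-$ and $[\huaE_-,\huaE_+]_\huaE\subset\huaE_+$. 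Intersecting the two yields $[\huaE_+,\huaE_-]_\huaE\subset\huaE_+\cap\huaE_-=0$ and likewise $[\huaE_-,\huaE_+]_\huaE=0$, which is exactly the asserted Leibniz-direct-sum decomposition.

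For the ``if'' direction, given a decomposition $\huaE=\huaE_+\oplus\huaE_-$ into subalgebras with $[\huaE_+,\huaE_-]_\huaE=[\huaE_-,\huaE_+]_\huaE=0$, I would take $E$ defined by \eqref{decomposition to product}, namely $E(x+\xi)=x-\xi$ for $x\in\huaE_+$, $\xi\in\huaE_-$; evidently $E^2=\Id$. To verify \eqref{product condition 1} and \eqref{product condition 2} it suffices, by bilinearity, to test on $x,y\in\huaE_+$ and $\xi,\eta\in\huaE_-$: the vanishing of the cross brackets collapses $[x+\xi,y+\eta]_\huaE$ to $[x,y]_\huaE+[\xi,\eta]_\huaE$ with $[x,y]_\huaE\in\huaE_+$ and $[\xi,\eta]_\huaE\in\huaE_-$, and a one-line computation then shows $E[x+\xi,y+\eta]_\huaE=[x,y]_\huaE-[\xi,\eta]_\huaE$ coincides with both $[E(x+\xi),y+\eta]_\huaE$ and $[x+\xi,E(y+\eta)]_\huaE$. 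Thus $E$ is a strict product structure. I do not expect a genuine obstacle; the only point deserving care is that parts (i) and (ii) of Proposition \ref{strict product com pro} must be invoked for the \emph{same} operator, so that the two decompositions they produce are the identical pair $\huaE_+,\huaE_-$ and the inclusions can be intersected.
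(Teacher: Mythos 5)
Your proposal is correct and follows essentially the same route as the paper: the forward direction extracts the eigenspace decomposition via Theorem \ref{product to decomposition} and combines the bracket inclusions from parts (i) and (ii) of Proposition \ref{strict product com pro} (applied to the same $E$, so the decompositions coincide), while the converse defines $E$ by \eqref{decomposition to product} and verifies \eqref{product condition 1}--\eqref{product condition 2} directly using the vanishing of the cross brackets. No gaps; this matches the paper's argument.
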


\begin{proof}
Let $E$ be a strict product structure on $\huaE$. By Theorem \ref{product to decomposition}, we have $\huaE=\huaE_+\oplus\huaE_-$, where $\huaE_+$ and $\huaE_-$ are subalgebras of $\huaE$. Moreover, we have $[\huaE_+,\huaE_-]_{\huaE}=[\huaE_-,\huaE_+]_{\huaE}=0$ by Proposition \ref{strict product com pro}.

Conversely, we obtain a product structure $E$ on $\huaE$ by (\ref{decomposition to product}). Since $\huaE_+$ is a subalgebra, we have
\begin{eqnarray*}
E[x_1,x_2]_\huaE=[x_1,x_2]_\huaE=[Ex_1,x_2]_\huaE=[x_1,Ex_2]_\huaE,\,\,\,\forall x_1,x_2\in\huaE_+,
\end{eqnarray*}
which implies that $E$ satisfies conditions \eqref{product condition 1} and \eqref{product condition 2} on $\huaE_+$. Similarly, $E$ satisfies  \eqref{product condition 1} and \eqref{product condition 2} on $\huaE_-$. Since $[\huaE_+,\huaE_-]_{\huaE}=[\huaE_-,\huaE_+]_{\huaE}=0$, $E$ satisfies  \eqref{product condition 1} and \eqref{product condition 2} on $\huaE$, that is, $E$ is a strict product structure on $\huaE$.
\end{proof}

\begin{pro}
Let $E$ be an almost product structure on the Leibniz algebra $(\huaE,[\cdot,\cdot]_\huaE)$. If $E$ satisfies
\begin{eqnarray}\label{product condition 3}
[x,y]_\huaE=-[Ex,Ey]_\huaE,\,\,\,\forall x,y\in\huaE,
\end{eqnarray}
then $E$ is a product structure on $(\huaE,[\cdot,\cdot]_\huaE)$.
\end{pro}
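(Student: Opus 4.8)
The plan is to verify the integrability condition \eqref{product str} by a direct computation, exploiting the fact that hypothesis \eqref{product condition 3} is assumed for \emph{all} pairs of elements, so that we may freely substitute $Ex$, $Ey$ for the arguments and then simplify using $E^2=\Id$.

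First I would extract two consequences of \eqref{product condition 3}. Applying it to the pair $(Ex,y)$ gives
\begin{eqnarray*}
[Ex,y]_\huaE=-[E(Ex),Ey]_\huaE=-[x,Ey]_\huaE,
\end{eqnarray*}
so that $[Ex,y]_\huaE+[x,Ey]_\huaE=0$ for all $x,y\in\huaE$. Applying it to the pair $(Ex,Ey)$ gives
\begin{eqnarray*}
[Ex,Ey]_\huaE=-[E(Ex),E(Ey)]_\huaE=-[x,y]_\huaE.
\end{eqnarray*}

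Then I would substitute these two identities into the right-hand side of \eqref{product str}:
\begin{eqnarray*}
[Ex,y]_\huaE+[x,Ey]_\huaE-E[Ex,Ey]_\huaE=0-E\big(-[x,y]_\huaE\big)=E[x,y]_\huaE,
\end{eqnarray*}
which is precisely the left-hand side. Hence \eqref{product str} holds, and since $E$ is already an almost product structure, $E$ is a product structure on $(\huaE,[\cdot,\cdot]_\huaE)$.

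There is essentially no obstacle here: the computation is purely formal and does not even invoke the Leibniz identity \eqref{Leibniz}. The only point requiring care is the logical one, namely that \eqref{product condition 3} is a universally quantified identity, which is exactly what licenses the substitutions $x\mapsto Ex$ and $y\mapsto Ey$ used above.
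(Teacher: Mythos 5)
Your proof is correct and is essentially the paper's own argument: both verify the integrability condition directly by substituting the universally quantified identity \eqref{product condition 3} (applied with $Ex$, $Ey$ in place of $x$, $y$) together with $E^2=\Id$, so that the first two terms cancel and the remaining term becomes $E[x,y]_\huaE$. No further comment is needed.
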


\begin{proof}
By \eqref{product condition 3} and $E^2=\Id$, we have
\begin{eqnarray*}
[Ex,y]_{\huaE}+[x,Ey]_{\huaE}-E[Ex,Ey]_{\huaE}&=&-[E^2 x,Ey]_{\huaE}+[x,Ey]_{\huaE}-E[Ex,Ey]_{\huaE}\\
&=&-[x,Ey]_{\huaE}+[x,Ey]_{\huaE}-E[Ex,Ey]_{\huaE}\\
&=&E[x,y]_\huaE.
\end{eqnarray*}
Therefore, $E$ is a product structure on $(\huaE,[\cdot,\cdot]_\huaE)$.
\end{proof}

\begin{defi}
An almost product structure  on a Leibniz algebra $(\huaE,[\cdot,\cdot]_\huaE)$ is called an {\bf abelian product structure} if   \eqref{product condition 3} holds.
\end{defi}

\begin{pro}
Let $(\huaE,[\cdot,\cdot]_\huaE)$ be a Leibniz algebra. Then there exists an abelian product structure on $(\huaE,[\cdot,\cdot]_\huaE)$ if and only if $\huaE$ admits a decomposition:
$$\huaE=\huaE_+\oplus\huaE_-,$$
where $\huaE_+$ and $\huaE_-$ are abelian subalgebras of $\huaE$.
\end{pro}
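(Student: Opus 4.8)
The plan is to mimic the proof of the strict-product-structure proposition, replacing the conditions $[\huaE_+,\huaE_-]_\huaE=[\huaE_-,\huaE_+]_\huaE=0$ by the requirement that $\huaE_\pm$ be \emph{abelian}, i.e.\ $[\huaE_+,\huaE_+]_\huaE=[\huaE_-,\huaE_-]_\huaE=0$, and invoking Theorem \ref{product to decomposition} together with the preceding proposition on abelian product structures. First I would argue the forward direction: given an abelian product structure $E$, condition \eqref{product condition 3} shows $E$ is a product structure, so by Theorem \ref{product to decomposition} we get $\huaE=\huaE_+\oplus\huaE_-$ with $\huaE_+,\huaE_-$ subalgebras (the $\pm1$-eigenspaces). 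To see $\huaE_+$ is abelian, take $x_1,x_2\in\huaE_+$; then $Ex_1=x_1$, $Ex_2=x_2$, and \eqref{product condition 3} gives $[x_1,x_2]_\huaE=-[Ex_1,Ex_2]_\huaE=-[x_1,x_2]_\huaE$, hence $2[x_1,x_2]_\huaE=0$ and, since the characteristic is $0$, $[x_1,x_2]_\huaE=0$. The same computation with $Ex_i=-x_i$ for $x_i\in\huaE_-$ gives $[x_1,x_2]_\huaE=-[-x_1,-x_2]_\huaE=-[x_1,x_2]_\huaE$, so $\huaE_-$ is abelian as well.

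For the converse, suppose $\huaE=\huaE_+\oplus\huaE_-$ with $\huaE_\pm$ abelian subalgebras, and define $E$ by \eqref{decomposition to product}, so $E^2=\Id$ automatically. It remains to verify \eqref{product condition 3} for all $x,y\in\huaE$, after which the preceding proposition yields that $E$ is a product structure, hence an abelian product structure by definition. I would check \eqref{product condition 3} on the four ``blocks'' $x\in\huaE_{\pm}$, $y\in\huaE_{\pm}$ and extend by bilinearity. On $\huaE_+\times\huaE_+$: $E x=x$, $Ey=y$, and since $\huaE_+$ is abelian both $[x,y]_\huaE$ and $[Ex,Ey]_\huaE$ vanish, so the identity holds trivially; likewise on $\huaE_-\times\huaE_-$. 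On the mixed blocks, say $x\in\huaE_+$, $y\in\huaE_-$: then $Ex=x$, $Ey=-y$, so $[Ex,Ey]_\huaE=[x,-y]_\huaE=-[x,y]_\huaE$, which is exactly $-[Ex,Ey]_\huaE=[x,y]_\huaE$; symmetrically for $x\in\huaE_-$, $y\in\huaE_+$. Bilinearity of $[\cdot,\cdot]_\huaE$ then gives \eqref{product condition 3} on all of $\huaE$.

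The proof is essentially routine; the only point requiring care is the use of $\mathrm{char}(\K)=0$ to pass from $2[x_1,x_2]_\huaE=0$ to $[x_1,x_2]_\huaE=0$ in the forward direction, and making sure that in the converse the mixed-block verification is carried out carefully rather than hand-waved — the abelian hypothesis is used only on the diagonal blocks, and the sign flip from $E$ handles the off-diagonal ones. There is no real obstacle: the whole argument parallels the strict-product case with ``$=0$ on cross brackets'' replaced by ``$=0$ on self brackets,'' and both directions reduce to the already-established correspondence of Theorem \ref{product to decomposition} plus the abelian-product criterion \eqref{product condition 3}.
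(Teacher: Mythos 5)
Your proposal is correct and follows essentially the same route as the paper: the forward direction deduces $[x_1,x_2]_\huaE=-[x_1,x_2]_\huaE$ on each eigenspace from \eqref{product condition 3} (the paper likewise divides by $2$, implicitly using characteristic $0$), and the converse defines $E$ by \eqref{decomposition to product} and verifies \eqref{product condition 3} directly, your block-by-block check being just the componentwise version of the paper's single computation on $[x_1+\xi_1,x_2+\xi_2]_\huaE$. No gaps.
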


\begin{proof}
Let $E$ be an abelian product structure on $\huaE$. We only need to show that $\huaE_+$ and $\huaE_-$ are abelian subalgebras. By \eqref{product condition 3}, we have
\begin{eqnarray*}
&&[x_1,x_2]_\huaE=-[Ex_1,Ex_2]_{\huaE}=-[x_1,x_2]_{\huaE},\,\,\,\forall x_1,x_2\in\huaE_+,\\
&&[\xi_1,\xi_2]_\huaE=-[E\xi_1,E\xi_2]_{\huaE}=-[-\xi_1,-\xi_2]_{\huaE}=-[\xi_1,\xi_2]_{\huaE},\,\,\,\forall \xi_1,\xi_2\in\huaE_-,
\end{eqnarray*}
which implies that $\huaE_+$ and $\huaE_-$ are abelian subalgebras of $\huaE$.

Conversely, we obtain a product structure defined by \eqref{decomposition to product} according to Theorem \ref{product to decomposition}. Since $\huaE_+$ and $\huaE_-$ are abelian subalgebras of $\huaE$, we have
$[x_1+\xi_1,x_2+\xi_2]_\huaE=-[E(x_1+\xi_1),E(x_2+\xi_2)]_\huaE$ for all $x_1+\xi_1,x_2+\xi_2\in\huaE=\huaE_+\oplus\huaE_-$, which means that $E$ is an abelian product structure on $\huaE$.
\end{proof}

\begin{defi}\label{def of paracomplex str}
A {\bf paracomplex structure} on the Leibniz algebra $(\huaE,[\cdot,\cdot]_{\huaE})$ is a product structure $E$ on $\huaE$ such that the eigenspaces of $\huaE$ associated to the eigenvalues $\pm 1$ have the same dimension, i.e. {\rm dim(}$\huaE_+${\rm)}={\rm dim(}$\huaE_-${\rm)}.
\end{defi}

\begin{pro}\label{paracomplex pro}
Let $(\huaL,\lhd,\rhd)$ be a Leibniz-dendriform algebra. Then there is a paracomplex structure $E$ on the phase space $\huaL\ltimes_{L^*_\lhd,-L^*_\lhd-R^*_\rhd}\huaL^*$ given by
\begin{eqnarray}\label{paracomplex str}
E(x+\xi)=x-\xi,\,\,\,\forall x\in\huaL,\xi\in\huaL^*.
\end{eqnarray}
\end{pro}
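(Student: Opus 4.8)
The plan is to deduce the statement directly from Theorem \ref{product to decomposition} together with the decomposition of the phase space already exhibited in the proof of Theorem \ref{phase space and Lei-Den}. Recall that for a Leibniz-dendriform algebra $(\huaL,\lhd,\rhd)$ its phase space is the semidirect product $\huaL\ltimes_{L^*_\lhd,-L^*_\lhd-R^*_\rhd}\huaL^*$, and in that proof it was shown that $(\huaL,[\cdot,\cdot]_{\lhd,\rhd})$ sits inside it as a subalgebra while $\huaL^*$ sits inside it as an abelian subalgebra. Hence, as a vector space, the phase space is the direct sum $\huaL\oplus\huaL^*$ of two subalgebras.

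First I would invoke Theorem \ref{product to decomposition}: given the decomposition $\huaL\oplus\huaL^*$ into subalgebras, formula \eqref{decomposition to product} produces a product structure $E$ whose $(+1)$-eigenspace is $\huaL$ and whose $(-1)$-eigenspace is $\huaL^*$; explicitly $E(x+\xi)=x-\xi$ for $x\in\huaL$ and $\xi\in\huaL^*$, which is exactly \eqref{paracomplex str}. Alternatively, one can verify $E^2=\Id$ trivially and check the integrability condition \eqref{product str} by a direct computation with the bracket \eqref{semidirect product}, splitting the arguments into their $\huaL$- and $\huaL^*$-components; but appealing to Theorem \ref{product to decomposition} avoids this bookkeeping.

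Finally I would note that, since all vector spaces here are finite-dimensional, $\dim\huaL=\dim\huaL^*$, so the eigenspaces $\huaL$ and $\huaL^*$ of $E$ associated with the eigenvalues $\pm1$ have the same dimension. By Definition \ref{def of paracomplex str}, $E$ is therefore a paracomplex structure on the phase space. There is essentially no obstacle here: the content has all been established upstream; the only point to be careful about is matching the sign conventions so that the endomorphism coming out of \eqref{decomposition to product} really is the map displayed in \eqref{paracomplex str}, which it is.
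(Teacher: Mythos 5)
Your proposal is correct and follows essentially the same route as the paper: identify $\huaL$ and $\huaL^*$ as the $\pm1$-eigenspace subalgebras of the semidirect product (using that $\huaL^*$ is an abelian subalgebra), apply Theorem \ref{product to decomposition} to get the product structure \eqref{paracomplex str}, and conclude with $\dim\huaL=\dim\huaL^*$ that it is paracomplex. No gaps.
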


\begin{proof}
It is obvious that $E^2=\Id$. Moreover, we have $(\huaL\ltimes_{L^*_\lhd,-L^*_\lhd-R^*_\rhd}\huaL^*)_+=\huaL, (\huaL\ltimes_{L^*_\lhd,-L^*_\lhd-R^*_\rhd}\huaL^*)_-=\huaL^*$ and they are two subalgebras of the semidirect product Leibniz algebra $\huaL\ltimes_{L^*_\lhd,-L^*_\lhd-R^*_\rhd}\huaL^*$. By Theorem \ref{product to decomposition}, $E$ is a product structure on $\huaL\ltimes_{L^*_\lhd,-L^*_\lhd-R^*_\rhd}\huaL^*$. Since dim($\huaL$)=dim($\huaL^*$), $E$ is a paracomplex structure on $\huaL\ltimes_{L^*_\lhd,-L^*_\lhd-R^*_\rhd}\huaL^*$.
\end{proof}

\begin{ex}\label{pro example 1}
Consider the $4$-dimensional Leibniz algebra $({\huaE},[\cdot,\cdot]_{\huaE})$ given in {\rm Example \ref{sym example 1}}. Then
\begin{center}
$E_1=$
$\begin{pmatrix}
1 & 0 & 0 & 0\\
0 & 1 & 0 & 0\\
0 & 0 & -1 & 0\\
0 & 0 & 0 & -1
\end{pmatrix}$,\,\,\,
$E_2=$
$\begin{pmatrix}
-1 & 0 & 0 & 0\\
0 & -1 & 0 & 0\\
0 & 0 & 1 & 0\\
0 & 0 & 0 & 1
\end{pmatrix}$,\,\,\,
$E_3=$
$\begin{pmatrix}
1 & 0 & 0 & 0\\
0 & -1 & 0 & 0\\
0 & 0 & -1 & 0\\
0 & 0 & 0 & 1
\end{pmatrix}$,\\
$E_4=$
$\begin{pmatrix}
1 & 0 & 0 & 0\\
0 & -1 & 0 & 0\\
0 & 0 & -1 & 0\\
0 & 0 & 0 & -1
\end{pmatrix}$,\,\,\,
$E_5=$
$\begin{pmatrix}
1 & 0 & 0 & 0\\
0 & -1 & 0 & 0\\
0 & 0 & 1 & 0\\
0 & 0 & 0 & 1
\end{pmatrix}$,\,\,\,
$E_6=$
$\begin{pmatrix}
-1 & 0 & 0 & 0\\
0 & 1 & 0 & 0\\
0 & 0 & -1 & 0\\
0 & 0 & 0 & -1
\end{pmatrix}$
\end{center}
are product structures on ${\huaE}$, where $E_1,E_2,E_3,E_4$ are abelian, $E_5,E_6$ are strict and $E_1,E_2,E_3$ are paracomplex structures.
\end{ex}

\begin{ex}
Let $T:\huaV\rightarrow\huaE$ be a relative Rota-Baxter operator on $\huaE$ with respect to the representation $(\huaV;l,r)$. By Proposition \ref{Leibniz-dendriform}, $(\huaV,[\cdot,\cdot]_{\lhd, \rhd})$ is the sub-adjacent Leibniz algebra of Leibniz-dendriform algebra $(\huaV,\lhd,\rhd)$. Then we obtain a Leibniz algebra $(\huaE\oplus\huaV,[\cdot,\cdot]_{\bowtie})$ which is studied in \cite{Tang-Sheng,Tang-Sheng2}, where $[\cdot,\cdot]_{\bowtie}$ is given by
\begin{eqnarray*}
[x+u,y+v]_{\bowtie}=
[x,y]_\huaE+([Tu,y]_\huaE-T(r_y u))+([x,Tv]_\huaE-T(l_x v))+[u,v]_{\lhd,\rhd}+l_x v+r_y u.
\end{eqnarray*}
Therefore, $E$ given by $E(x+u)=x-u$ for all $x\in\huaE,u\in\huaV$ is a product structure on $\huaE\oplus\huaV$.
\end{ex}

\section{Complex structures on Leibniz algebras}\label{sec:com}

In this section, we introduce the notion of a complex structure on a Leibniz algebra. Parallel to the case of product structures, we also find two special complex structures.

\begin{defi}\label{def of complex}
Let $(\huaE,[\cdot,\cdot]_{\huaE})$ be a real Leibniz algebra. An {\bf almost complex structure} on $\huaE$ is a linear endomorphism $J:\huaE\rightarrow\huaE$ satisfying $J^2=-\Id$. A {\bf complex structure} on $\huaE$ is an almost complex structure $J$ satisfying the following integrability condition:
\begin{eqnarray}\label{complex str}
J[x,y]_{\huaE}=[Jx,y]_{\huaE}+[x,Jy]_{\huaE}+J[Jx,Jy]_{\huaE},\,\,\,\forall x,y\in\huaE.
\end{eqnarray}
\end{defi}

\begin{rmk}
One can understand a complex structure on a Leibniz algebra as a Nijenhuis operator $J$ on the Leibniz algebra satisfying $J^2=-\Id$.
\end{rmk}

\begin{rmk}
One can use {\rm Definition \ref{def of complex}} to define the notion of a complex structure on a complex Leibniz algebra, considering $J$ to be $\mathds{C}$-linear. We do not pay more attention to such case since {\rm Proposition \ref{relation between product and complex str}} will show that there is a one-to-one correspondence between $\mathds{C}$-linear complex structures and product structures.
\end{rmk}

Consider $\huaE_{\mathds{C}}=\huaE\otimes_{\mathds{R}}\mathds{C}\cong\{x+iy\mid x,y\in\huaE\}$, the complexification of the real Leibniz algebra $\huaE$, which turns out to be a complex Leibniz algebra by extending the Leibniz bracket on $\huaE$ complex linearly, and we denote it by $(\huaE_{\mathds{C}},[\cdot,\cdot]_{\huaE_\mathds{C}})$. We denote by $\sigma$ the conjugation in $\huaE_{\mathds{C}}$ with respect to the real form $\huaE$, that is, $\sigma(x+iy)=x-iy$, where $x,y\in\huaE$. Obviously, $\sigma$ is a complex antilinear, involutive automorphism of the complex vector space $\huaE_{\mathds{C}}$. There is an equivalent description of the integrability condition given in Definition \ref{def of complex}.

\begin{thm}\label{complex to decomposition}
Let $(\huaE,[\cdot,\cdot]_{\huaE})$ be a real Leibniz algebra. Then $\huaE$ has a complex structure if and only if $\huaE_{\mathds{C}}$ admits a decomposition:
\begin{eqnarray}
\huaE_{\mathds{C}}=\mathfrak{q}\oplus \mathfrak{p},
\end{eqnarray}
where $\mathfrak{q}$ and $\mathfrak{p}=\sigma(\mathfrak{q})$ are complex subalgebras of $\huaE_{\mathds{C}}$.
\end{thm}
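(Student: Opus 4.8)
The plan is to mimic the proof of Theorem \ref{product to decomposition}, transported to the complexification. The key observation is that an almost complex structure $J$ on $\huaE$ extends complex-linearly to an endomorphism of $\huaE_{\mathds C}$, still denoted $J$, which now satisfies $J^2=-\Id$ as a $\mathds C$-linear map; since $\mathds K=\mathds C$ here, $J$ is diagonalizable on $\huaE_{\mathds C}$ with eigenvalues $\pm i$, giving $\huaE_{\mathds C}=\huaE_i\oplus\huaE_{-i}$ where $\huaE_{\pm i}=\{x\in\huaE_{\mathds C}\mid Jx=\pm ix\}$. The first step is to verify that the integrability condition \eqref{complex str}, which extends complex-linearly to $\huaE_{\mathds C}$, forces $\huaE_i$ to be a subalgebra: for $x_1,x_2\in\huaE_i$, applying \eqref{complex str} gives $J[x_1,x_2]_{\huaE_{\mathds C}}=[ix_1,x_2]_{\huaE_{\mathds C}}+[x_1,ix_2]_{\huaE_{\mathds C}}+J[ix_1,ix_2]_{\huaE_{\mathds C}}=2i[x_1,x_2]_{\huaE_{\mathds C}}-J[x_1,x_2]_{\huaE_{\mathds C}}$, so $J[x_1,x_2]_{\huaE_{\mathds C}}=i[x_1,x_2]_{\huaE_{\mathds C}}$, i.e.\ $[x_1,x_2]_{\huaE_{\mathds C}}\in\huaE_i$. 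The same computation with $-i$ shows $\huaE_{-i}$ is a subalgebra. Then set $\mathfrak q=\huaE_i$ and $\mathfrak p=\huaE_{-i}$; since $J$ is the complexification of a real operator it commutes with the conjugation $\sigma$ in the sense $J\sigma=\sigma J$ (as $\sigma$ is antilinear, $J\sigma(x)=\sigma(Jx)$ still holds because $J$ has real matrix entries), so $\sigma$ sends the $i$-eigenspace to the $(-i)$-eigenspace, giving $\mathfrak p=\sigma(\mathfrak q)$.

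For the converse, suppose $\huaE_{\mathds C}=\mathfrak q\oplus\mathfrak p$ with $\mathfrak p=\sigma(\mathfrak q)$ and both complex subalgebras. Define a $\mathds C$-linear endomorphism $\widetilde J$ of $\huaE_{\mathds C}$ by $\widetilde J(x+\xi)=ix-i\xi$ for $x\in\mathfrak q$, $\xi\in\mathfrak p$; clearly $\widetilde J^2=-\Id$. I would first check that $\widetilde J$ commutes with $\sigma$: writing $Z=x+\xi$ with $x\in\mathfrak q$, $\xi=\sigma(y)\in\mathfrak p$ for some $y\in\mathfrak q$, one has $\sigma(Z)=\sigma(x)+\sigma(\xi)=\sigma(x)+y$, and since $\sigma(x)\in\mathfrak p$, $y\in\mathfrak q$, a short computation gives $\widetilde J\sigma(Z)=\sigma(\widetilde J Z)$. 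Hence $\widetilde J$ preserves the real form $\huaE=\{Z\mid\sigma(Z)=Z\}$ and restricts to a real operator $J$ on $\huaE$ with $J^2=-\Id$. It remains to verify the integrability condition \eqref{complex str} for $J$; by $\mathds C$-bilinearity of the bracket and $\mathds C$-linearity of both sides it suffices to check it on $\huaE_{\mathds C}$, and there it suffices to check it on pairs of elements each lying in $\mathfrak q$ or in $\mathfrak p$. Each such case is the same routine expansion as in the product-structure proof: for $x_1,x_2\in\mathfrak q$, both sides equal $i[x_1,x_2]_{\huaE_{\mathds C}}$ using that $\mathfrak q$ is a subalgebra; for $x_1,x_2\in\mathfrak p$ both sides equal $-i[x_1,x_2]_{\huaE_{\mathds C}}$; and for mixed pairs one checks that all terms cancel appropriately.

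I expect the only genuinely delicate point to be the bookkeeping around the antilinear conjugation $\sigma$: one must be careful that $\widetilde J$, built from the complex scalar $i$, genuinely descends to a \emph{real} operator, which hinges precisely on the hypothesis $\mathfrak p=\sigma(\mathfrak q)$ and on the fact that $\widetilde J$ acts by the opposite scalar on $\mathfrak p$. Everything else — the eigenspace decomposition, the subalgebra verifications, the integrability check — is a direct transcription of the argument in Theorem \ref{product to decomposition}, with the eigenvalues $\pm 1$ replaced by $\pm i$. I would present the forward direction in full and then say the converse computations are entirely analogous to those in the proof of Theorem \ref{product to decomposition}, spelling out only the passage through $\sigma$.
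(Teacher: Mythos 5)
Your proposal is correct and follows essentially the same route as the paper: extend $J$ complex-linearly and use the $\pm i$-eigenspace decomposition with the integrability computation to get subalgebras exchanged by $\sigma$, and conversely define the operator acting by $i$ on $\mathfrak q$ and $-i$ on $\mathfrak p=\sigma(\mathfrak q)$, check it commutes with $\sigma$ so it restricts to a real operator on $\huaE$, and verify integrability case by case. The cancellation you flag for mixed pairs indeed works, and your $\sigma$-bookkeeping matches the paper's argument that the fixed-point set of $\sigma$ is preserved.
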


\begin{proof}
Let $J$ be a complex structure on the real Leibniz algebra $(\huaE,[\cdot,\cdot]_{\huaE})$. We extend $J$ complex linearly, which is denoted by $J_{\mathds{C}}$, that is, $J_{\mathds{C}}:\huaE_{\mathds{C}}\rightarrow\huaE_{\mathds{C}}$ is defined by
\begin{eqnarray}\label{JC_1}
J_{\mathds{C}}(x+iy)=Jx+iJy,\,\,\,\forall x,y\in\huaE.
\end{eqnarray}
It is easy to verify that $J_{\mathds{C}}$ is a complex linear endomorphism on $\huaE_{\mathds{C}}$ satisfying $J_{\mathds{C}}^2=-\Id$ and the integrability condition (\ref{complex str}) on $\huaE_{\mathds{C}}$. Denoted by $\huaE_{\pm i}$ the corresponding eigenspaces of $\huaE_{\mathds{C}}$ associated to the eigenvalues $\pm i$ and there holds:
$$\huaE_{\mathds{C}}=\huaE_i\oplus\huaE_{-i}.$$
It is straightforward to see that $\huaE_i=\{x-iJx\mid x\in\huaE\}$ and $\huaE_{-i}=\{x+iJx\mid x\in\huaE\}$.
Therefore, we have $\huaE_{-i}=\sigma(\huaE_{i})$.
For all $X,Y\in\huaE_{i}$, we have
\begin{eqnarray*}
J_{\mathds{C}}[X,Y]_{\huaE_\mathds{C}}&=&[J_{\mathds{C}}X,Y]_{\huaE_{\mathds{C}}}+[X,J_{\mathds{C}}Y]_{\huaE_{\mathds{C}}}+J_{\mathds{C}}[J_{\mathds{C}}X,J_{\mathds{C}}Y]_{\huaE_{\mathds{C}}}\\
&=&[iX,Y]_{\huaE_{\mathds{C}}}+[X,iY]_{\huaE_{\mathds{C}}}+J_{\mathds{C}}[iX,iY]_{\huaE_{\mathds{C}}}\\
&=&2i[X,Y]_{\huaE_{\mathds{C}}}-J_{\mathds{C}}[X,Y]_{\huaE_{\mathds{C}}},
\end{eqnarray*}
which means that $[X,Y]_{\huaE_\mathds{C}}\in\huaE_{i}$. Thus $\huaE_{i}$ is a subalgebra of $\huaE_\mathds{C}$. Similarly, we can show that $\huaE_{-i}$ is also a subalgebra.

Conversely, define a complex linear endomorphism $J_{\mathds{C}}:\huaE_{\mathds{C}}\rightarrow\huaE_{\mathds{C}}$ by
\begin{eqnarray}\label{complex JC}
J_{\mathds{C}}(X+\sigma(Y))&=&iX-i\sigma(Y),\,\,\,\forall X,Y\in\mathfrak{q}.
\end{eqnarray}
Since $\sigma$ is a complex antilinear, involutive automorphism of $\huaE_{\mathds{C}}$, we have
\begin{eqnarray*}
J_{\mathds{C}}^2(X+\sigma(Y))=J_{\mathds{C}}(iX-i\sigma(Y))=J_{\mathds{C}}(iX+\sigma(iY))
=i(iX)-i\sigma(iY)=-X-\sigma(Y),
\end{eqnarray*}
i.e. $J_{\mathds{C}}^2=-\Id$. Since $\mathfrak{q}$ is a subalgebra of $\huaE_{\mathds{C}}$, for all $X,Y\in\mathfrak{q}$, we have
\begin{eqnarray*}
&&[J_{\mathds{C}}X,Y]_{\huaE_{\mathds{C}}}+[X,J_{\mathds{C}}Y]_{\huaE_{\mathds{C}}}+J_{\mathds{C}}[J_{\mathds{C}}X,J_{\mathds{C}}Y]_{\huaE_{\mathds{C}}}\\
&=&[iX,Y]_{\huaE_{\mathds{C}}}+[X,iY]_{\huaE_{\mathds{C}}}+J_{\mathds{C}}[iX,iY]_{\huaE_{\mathds{C}}}\\
&=&2i[X,Y]_{\huaE_{\mathds{C}}}-J_{\mathds{C}}[X,Y]_{\huaE_{\mathds{C}}}=i[X,Y]_{\huaE_{\mathds{C}}}\\
&=&J_{\mathds{C}}[X,Y]_{\huaE_{\mathds{C}}},
\end{eqnarray*}
which implies that $J_{\mathds{C}}$ satisfies (\ref{complex str}) for all $X,Y\in\mathfrak{q}$. Similarly, we can show that $J_{\mathds{C}}$ satisfies (\ref{complex str}) for all $\mathcal{X},\mathcal{Y}\in\huaE_{\mathds{C}}$. Since $\huaE_{\mathds{C}}=\mathfrak{q}\oplus \mathfrak{p}$, we can write $\mathcal{X}\in\huaE_{\mathds{C}}$ as $\mathcal{X}=X+\sigma(Y)$ for some $X,Y\in\mathfrak{q}$. Since $\sigma$ is a complex antilinear, involutive automorphism of $\huaE_{\mathds{C}}$, we have
\begin{eqnarray*}
(J_{\mathds{C}}\circ\sigma)(X+\sigma(Y))&=&J_{\mathds{C}}(Y+\sigma(X))=iY-i\sigma(X)=\sigma(iX-i\sigma(Y))\\
&=&(\sigma\circ J_{\mathds{C}})(X+\sigma(Y)),
\end{eqnarray*}
which means that $J_{\mathds{C}}\circ\sigma=\sigma\circ J_{\mathds{C}}$. Since $\sigma(\mathcal{X})=\mathcal{X}$ if and only if $\mathcal{X}\in\huaE$, i.e. the set of fixed points of $\sigma$ is the real vector space $\huaE$, then for all $x\in\huaE$, $\sigma(J_{\mathds{C}}(x))=J_{\mathds{C}}(\sigma(x))=J_{\mathds{C}}(x)$ implies that $J_{\mathds{C}}(x)\in\huaE$, which makes $J\triangleq J_{\mathds{C}}{\mid_{\huaE}}\in\gl(\huaE)$ well-defined. It is obvious that $J$ is a complex structure on $\huaE$ since $J_{\mathds{C}}$ satisfies (\ref{complex str}) and $J_{\mathds{C}}^2=-\Id$.
\end{proof}

\begin{pro}\label{strict complex com pro}
Let $J$ be an almost complex structure on the real Leibniz algebra $(\huaE,[\cdot,\cdot]_\huaE)$ and $\huaE_i$, $\huaE_{-i}$ the eigenspaces of $\huaE$ associated to the eigenvalues $\pm i$.
\begin{enumerate}[{\rm (i)}]
  \item If $J$ satisfies
   \begin{eqnarray}\label{complex condition 1}
    J[x,y]_\huaE=[Jx,y]_\huaE,\,\,\,\forall x,y\in\huaE,
   \end{eqnarray}
   then $J$ is a complex structure on $(\huaE,[\cdot,\cdot]_\huaE)$, such that $[\huaE_i,\huaE_{-i}]_{\huaE_\mathds{C}}\subset\huaE_i$ and $[\huaE_{-i},\huaE_i]_{\huaE_\mathds{C}}\subset\huaE_{-i}$.
  \item If $J$ satisfies
   \begin{eqnarray}\label{complex condition 2}
    J[x,y]_\huaE=[x,Jy]_\huaE,\,\,\,\forall x,y\in\huaE,
   \end{eqnarray}
   then $J$ is a complex structure on $(\huaE,[\cdot,\cdot]_\huaE)$, such that $[\huaE_i,\huaE_{-i}]_{\huaE_\mathds{C}}\subset\huaE_{-i}$ and $[\huaE_{-i},\huaE_i]_{\huaE_\mathds{C}}\subset\huaE_i$.
\end{enumerate}
\end{pro}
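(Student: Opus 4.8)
The plan is to follow the same two-step strategy used in the proof of Proposition \ref{strict product com pro}: first promote each of the one-sided conditions \eqref{complex condition 1} and \eqref{complex condition 2} to the full integrability condition \eqref{complex str}, so that Theorem \ref{complex to decomposition} applies; then read off the bracket relations between the eigenspaces $\huaE_i$ and $\huaE_{-i}$ from the complexified picture, using that the $\pm i$-eigenspaces of $J_{\mathds{C}}$ are $\huaE_i=\{x-iJx\mid x\in\huaE\}$ and $\huaE_{-i}=\{x+iJx\mid x\in\huaE\}$.

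For (i), I would start from the right-hand side of \eqref{complex str} and use $J^2=-\Id$ together with \eqref{complex condition 1} applied to the pair $(Jx,y)$ to rewrite $J[Jx,Jy]_\huaE=[J^2x,Jy]_\huaE=-[x,Jy]_\huaE$; substituting this gives $[Jx,y]_\huaE+[x,Jy]_\huaE-[x,Jy]_\huaE=[Jx,y]_\huaE=J[x,y]_\huaE$, so $J$ is a complex structure. By Theorem \ref{complex to decomposition} we then have $\huaE_{\mathds{C}}=\huaE_i\oplus\huaE_{-i}$ with both summands complex subalgebras. Extending \eqref{complex condition 1} complex-linearly to $J_{\mathds{C}}$, for $X\in\huaE_i$ and arbitrary $Y$ one computes $J_{\mathds{C}}[X,Y]_{\huaE_{\mathds{C}}}=[J_{\mathds{C}}X,Y]_{\huaE_{\mathds{C}}}=[iX,Y]_{\huaE_{\mathds{C}}}=i[X,Y]_{\huaE_{\mathds{C}}}$, so $[X,Y]_{\huaE_{\mathds{C}}}\in\huaE_i$; in particular $[\huaE_i,\huaE_{-i}]_{\huaE_{\mathds{C}}}\subset\huaE_i$. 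Taking instead $X\in\huaE_{-i}$ gives the eigenvalue $-i$ and hence $[\huaE_{-i},\huaE_i]_{\huaE_{\mathds{C}}}\subset\huaE_{-i}$.

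Part (ii) is entirely parallel, with the two arguments swapped: applying \eqref{complex condition 2} to $(Jx,Jy)$ gives $J[Jx,Jy]_\huaE=[Jx,J^2y]_\huaE=-[Jx,y]_\huaE$, which again collapses the right-hand side of \eqref{complex str} to $[x,Jy]_\huaE=J[x,y]_\huaE$, so $J$ is a complex structure; then for $Y\in\huaE_{-i}$ and arbitrary $X$ one has $J_{\mathds{C}}[X,Y]_{\huaE_{\mathds{C}}}=[X,J_{\mathds{C}}Y]_{\huaE_{\mathds{C}}}=-i[X,Y]_{\huaE_{\mathds{C}}}$, forcing $[X,Y]_{\huaE_{\mathds{C}}}\in\huaE_{-i}$ and so $[\huaE_i,\huaE_{-i}]_{\huaE_{\mathds{C}}}\subset\huaE_{-i}$, while for $Y\in\huaE_i$ the same computation yields eigenvalue $i$ and $[\huaE_{-i},\huaE_i]_{\huaE_{\mathds{C}}}\subset\huaE_i$. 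All steps are short manipulations relying only on $J^2=-\Id$ and the complex-linear extension, so there is no genuine obstacle; the only point needing care is the eigenvalue bookkeeping when translating the real identities \eqref{complex condition 1}--\eqref{complex condition 2} into statements about $J_{\mathds{C}}$ acting on $\huaE_{\mathds{C}}=\huaE_i\oplus\huaE_{-i}$.
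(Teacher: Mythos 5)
Your proposal is correct and follows essentially the same route as the paper: you collapse the right-hand side of \eqref{complex str} using $J^2=-\Id$ and the one-sided condition (note the substitution in (i) is \eqref{complex condition 1} applied to the pair $(Jx,Jy)$, not $(Jx,y)$, though your displayed formula $J[Jx,Jy]_\huaE=[J^2x,Jy]_\huaE=-[x,Jy]_\huaE$ is the right one), and then read off the eigenspace inclusions from the complex-linear extension $J_{\mathds{C}}$ exactly as in the paper's proof. The only difference is that you write out part (ii) explicitly, which the paper leaves to the reader.
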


\begin{proof}
 (i)  By \eqref{complex condition 1} and $J^2=-\Id$, we have
\begin{eqnarray*}
[Jx,y]_{\huaE}+[x,Jy]_{\huaE}+J[Jx,Jy]_{\huaE}&=&[Jx,y]_{\huaE}+[x,Jy]_{\huaE}+[J^2 x,Jy]_{\huaE}\\
&=&[Jx,y]_{\huaE}+[x,Jy]_{\huaE}-[x,Jy]_{\huaE}\\
&=&J[x,y]_\huaE.
\end{eqnarray*}
Therefore, $J$ is a complex structure on $(\huaE,[\cdot,\cdot]_\huaE)$. For all $x\in\huaE_i$ and $Y\in\huaE_{-i}$, we have $J_{\mathds{C}}[x,Y]_{\huaE_\mathds{C}}=[J_{\mathds{C}}(x),Y]_{\huaE_\mathds{C}}=[ix,Y]_{\huaE_\mathds{C}}=i[x,Y]_{\huaE_\mathds{C}}$  , which implies that $[\huaE_i,\huaE_{-i}]_{\huaE_\mathds{C}}\subset\huaE_i$. In the same way, we obtain $[\huaE_{-i},\huaE_i]_{\huaE_\mathds{C}}\subset\huaE_{-i}$.

  (ii) The proof is similar to (i), which is left to readers.
\end{proof}
\emptycomment{
\begin{pro}
Let $(\huaE,[\cdot,\cdot]_\huaE)$ be a real Leibniz algebra. Then
\begin{enumerate}[{\rm (i)}]
  \item An almost complex structure $J$ on $\huaE$ satisfying
   \begin{eqnarray}\label{complex condition 1}
    J[x,y]_\huaE=[Jx,y]_\huaE,\,\,\,\forall x,y\in\huaE,
   \end{eqnarray}
   if and only if $J$ is a complex structure on $\huaE$ such that $[\huaE_i,\huaE_{-i}]_{\huaE_\mathds{C}}\subset\huaE_i$ and $[\huaE_{-i},\huaE_i]_{\huaE_\mathds{C}}\subset\huaE_{-i}$, where $\huaE_i$ and $\huaE_{-i}$ are the eigenspaces of $\huaE$ associated to the eigenvalues $\pm i$.
  \item An almost complex structure $J$ on $\huaE$ satisfying
   \begin{eqnarray}\label{complex condition 2}
    J[x,y]_\huaE=[x,Jy]_\huaE,\,\,\,\forall x,y\in\huaE,
   \end{eqnarray}
   if and only if $J$ is a complex structure on $\huaE$ such that $[\huaE_i,\huaE_{-i}]_{\huaE_\mathds{C}}\subset\huaE_{-i}$ and $[\huaE_{-i},\huaE_i]_{\huaE_\mathds{C}}\subset\huaE_i$, where $\huaE_i$ and $\huaE_{-i}$ are the eigenspaces of $\huaE$ associated to the eigenvalues $\pm i$.
\end{enumerate}
\end{pro}

\begin{proof}
(i)  By \eqref{complex condition 1} and $J^2=-\Id$, we have
\begin{eqnarray*}
[Jx,y]_{\huaE}+[x,Jy]_{\huaE}+J[Jx,Jy]_{\huaE}&=&[Jx,y]_{\huaE}+[x,Jy]_{\huaE}+[J^2 x,Jy]_{\huaE}\\
&=&[Jx,y]_{\huaE}+[x,Jy]_{\huaE}-[x,Jy]_{\huaE}\\
&=&J[x,y]_\huaE,
\end{eqnarray*}
which implies that $J$ is a complex structure on $(\huaE,[\cdot,\cdot]_\huaE)$. For all $x\in\huaE_i$ and $Y\in\huaE_{-i}$, we have $J_{\mathds{C}}[x,Y]_{\huaE_\mathds{C}}=[J_{\mathds{C}}(x),Y]_{\huaE_\mathds{C}}=[ix,Y]_{\huaE_\mathds{C}}=i[x,Y]_{\huaE_\mathds{C}}$, which implies that $[\huaE_i,\huaE_{-i}]_{\huaE_\mathds{C}}\subset\huaE_i$. In the same way, we obtain $[\huaE_{-i},\huaE_i]_{\huaE_\mathds{C}}\subset\huaE_{-i}$.

Conversely, by Theorem \ref{complex to decomposition}, we obtain a complex structure $J_\mathds{C}$ on $\huaE_\mathds{C}$ defined by \eqref{JC_1}. Since $\huaE_i$ and $\huaE_{-i}$ are complex subalgebras, $[\huaE_i,\huaE_{-i}]_{\huaE_\mathds{C}}\subset\huaE_i$ and $[\huaE_{-i},\huaE_i]_{\huaE_\mathds{C}}\subset\huaE_{-i}$, we have
\begin{eqnarray*}
J_\mathds{C}[x_1+Y_1,x_2+Y_2]_{\huaE_\mathds{C}}
&=&J_\mathds{C}([x_1,x_2]_{\huaE_\mathds{C}}+[x_1,Y_2]_{\huaE_\mathds{C}}+[Y_1,x_2]_{\huaE_\mathds{C}}+[Y_1,Y_2]_{\huaE_\mathds{C}})\\
&=&i[x_1,x_2]_{\huaE_\mathds{C}}+i[x_1,Y_2]_{\huaE_\mathds{C}}-i[Y_1,x_2]_{\huaE_\mathds{C}}-i[Y_1,Y_2]_{\huaE_\mathds{C}}\\
&=&[ix_1-iY_1,x_2+Y_2]_{\huaE_\mathds{C}}=[J_{\huaE_\mathds{C}}(x_1+Y_1),x_2+Y_2]_\huaE,
\end{eqnarray*}
for all $x_1,x_2\in\huaE_i$ and $Y_1,Y_2\in\huaE_{-i}$.
\end{proof}}

\begin{defi}
An almost complex structure on the real Leibniz algebra $(\huaE,[\cdot,\cdot]_\huaE)$ is called a ${\bf strict~complex~structure}$ if  \eqref{complex condition 1} and \eqref{complex condition 2} hold.
\end{defi}

\begin{pro}
Let $(\huaE,[\cdot,\cdot]_\huaE)$ be a real Leibniz algebra. Then there exists a strict complex structure on $(\huaE,[\cdot,\cdot]_\huaE)$ if and only if $\huaE_{\mathds{C}}$ admits a decomposition:
$$\huaE_{\mathds{C}}=\mathfrak{q}\oplus \mathfrak{p},$$
where $\mathfrak{q}$ and $\mathfrak{p}=\sigma(\mathfrak{q})$ are complex subalgebras of $\huaE_{\mathds{C}}$ such that $[\mathfrak{q},\mathfrak{p}]_{\huaE_\mathds{C}}=[\mathfrak{p},\mathfrak{q}]_{\huaE_\mathds{C}}=0$, that is, $\huaE_{\mathds{C}}$ is a Leibniz algebra direct sum of $\mathfrak{q}$ and $\mathfrak{p}$.
\end{pro}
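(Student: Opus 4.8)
The plan is to mimic the proof of the analogous statement for strict product structures (Proposition after Definition of strict product structure), transported through the complexification machinery already set up in Theorem \ref{complex to decomposition} and Proposition \ref{strict complex com pro}. First I would recall that by Proposition \ref{strict complex com pro}, a strict complex structure $J$ (i.e. one satisfying both \eqref{complex condition 1} and \eqref{complex condition 2}) is in particular a complex structure, so Theorem \ref{complex to decomposition} applies and yields a decomposition $\huaE_{\mathds{C}}=\huaE_i\oplus\huaE_{-i}=\mathfrak{q}\oplus\mathfrak{p}$ with $\mathfrak{p}=\sigma(\mathfrak{q})$, where $\mathfrak{q}=\huaE_i=\{x-iJx\mid x\in\huaE\}$ and $\mathfrak{p}=\huaE_{-i}$ are complex subalgebras. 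Then, combining the two inclusions from Proposition \ref{strict complex com pro}(i) and (ii) — namely $[\huaE_i,\huaE_{-i}]_{\huaE_\mathds{C}}\subset\huaE_i\cap\huaE_{-i}=0$ and likewise $[\huaE_{-i},\huaE_i]_{\huaE_\mathds{C}}\subset\huaE_{-i}\cap\huaE_i=0$ — gives $[\mathfrak{q},\mathfrak{p}]_{\huaE_\mathds{C}}=[\mathfrak{p},\mathfrak{q}]_{\huaE_\mathds{C}}=0$, which is exactly the forward direction.

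For the converse, I would start from a decomposition $\huaE_{\mathds{C}}=\mathfrak{q}\oplus\mathfrak{p}$ with $\mathfrak{p}=\sigma(\mathfrak{q})$ complex subalgebras satisfying $[\mathfrak{q},\mathfrak{p}]_{\huaE_\mathds{C}}=[\mathfrak{p},\mathfrak{q}]_{\huaE_\mathds{C}}=0$. By the converse part of Theorem \ref{complex to decomposition}, the endomorphism $J_{\mathds{C}}$ defined by $J_{\mathds{C}}(X+\sigma(Y))=iX-i\sigma(Y)$ for $X,Y\in\mathfrak{q}$ restricts to a complex structure $J=J_{\mathds{C}}\mid_\huaE$ on $\huaE$. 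It then remains to check that $J$ satisfies \eqref{complex condition 1} and \eqref{complex condition 2}. For this I would verify the $\mathds{C}$-linear identities $J_{\mathds{C}}[\mathcal{X},\mathcal{Y}]_{\huaE_\mathds{C}}=[J_{\mathds{C}}\mathcal{X},\mathcal{Y}]_{\huaE_\mathds{C}}$ and $J_{\mathds{C}}[\mathcal{X},\mathcal{Y}]_{\huaE_\mathds{C}}=[\mathcal{X},J_{\mathds{C}}\mathcal{Y}]_{\huaE_\mathds{C}}$ on all of $\huaE_{\mathds{C}}$ by writing $\mathcal{X}=X_1+\sigma(Y_1)$, $\mathcal{Y}=X_2+\sigma(Y_2)$ with $X_j,Y_j\in\mathfrak{q}$, expanding the bracket using bilinearity, and using that the cross terms $[\mathfrak{q},\mathfrak{p}]_{\huaE_\mathds{C}}$ and $[\mathfrak{p},\mathfrak{q}]_{\huaE_\mathds{C}}$ vanish while $[\mathfrak{q},\mathfrak{q}]_{\huaE_\mathds{C}}\subset\mathfrak{q}$ and $[\mathfrak{p},\mathfrak{p}]_{\huaE_\mathds{C}}\subset\mathfrak{p}$ (the latter because $\mathfrak{p}=\sigma(\mathfrak{q})$ is a subalgebra). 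On the surviving terms $J_{\mathds{C}}$ acts as multiplication by $i$ on the $\mathfrak{q}$-component and by $-i$ on the $\mathfrak{p}$-component, so both sides reduce to $i[X_1,X_2]_{\huaE_\mathds{C}}-i[\sigma(Y_1),\sigma(Y_2)]_{\huaE_\mathds{C}}$. Restricting the resulting $\mathds{C}$-linear identities to the real form $\huaE$ gives \eqref{complex condition 1} and \eqref{complex condition 2}, so $J$ is a strict complex structure.

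I do not expect any serious obstacle: the argument is a routine adaptation of the strict product structure proposition, with the complexification bookkeeping already handled in Theorem \ref{complex to decomposition}. The only point requiring a little care is making sure that \emph{both} conditions \eqref{complex condition 1} and \eqref{complex condition 2} follow from the \emph{single} hypothesis that both cross-brackets vanish; this is why one must check the $\mathds{C}$-linear identities on all of $\huaE_{\mathds{C}}$ and not merely on $\mathfrak{q}$, and why it is convenient to first establish the vanishing of the cross terms and the closure of $\mathfrak{q},\mathfrak{p}$ before doing the four-term expansion. As in the earlier analogous proofs in the paper, the bulk of the verification is a short symmetric computation that can be left to the reader after indicating the key cancellations.
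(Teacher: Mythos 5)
Your proposal is correct and follows essentially the same route as the paper: the forward direction combines Theorem \ref{complex to decomposition} with both parts of Proposition \ref{strict complex com pro} (the intersection argument $[\huaE_i,\huaE_{-i}]_{\huaE_\mathds{C}}\subset\huaE_i\cap\huaE_{-i}=0$ being exactly what the paper's citation amounts to), and the converse builds $J_{\mathds{C}}$ by \eqref{complex JC}, verifies \eqref{complex condition 1} and \eqref{complex condition 2} on $\mathfrak{q}$, on $\mathfrak{p}$, and across the direct sum using the vanishing cross-brackets, then restricts to $\huaE$ as in the proof of Theorem \ref{complex to decomposition}. Your explicit four-term expansion is just a spelled-out version of the paper's remark that the identities extend to $\huaE_{\mathds{C}}$ because it is a Leibniz algebra direct sum.
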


\begin{proof}
Let $J$ be a strict complex structure on the real Leibniz algebra $(\huaE,[\cdot,\cdot]_\huaE)$. By Theorem \ref{complex to decomposition} and Proposition \ref{strict complex com pro}, we have $\huaE_{\mathds{C}}=\huaE_i\oplus\huaE_{-i}$, where $[\huaE_i,\huaE_{-i}]_{\huaE_\mathds{C}}=[\huaE_{-i},\huaE_i]_{\huaE_\mathds{C}}=0$.

Conversely, we define a complex linear endomorphism $J_{\mathds{C}}:\huaE_{\mathds{C}}\rightarrow\huaE_{\mathds{C}}$ by (\ref{complex JC}), where $J_{\mathds{C}}^2=-\Id$ by direct computation. Since $\mathfrak{q}$ is a complex subalgebra of $\huaE_{\mathds{C}}$, then
$$J_{\mathds{C}}[x,y]_{\huaE_\mathds{C}}=i[x,y]_{\huaE_\mathds{C}}
=[J_{\mathds{C}}(x),y]_{\huaE_\mathds{C}}=[x,J_{\mathds{C}}(y)]_{\huaE_\mathds{C}},\,\,\,\forall x,y\in\mathfrak{q},$$
which implies that $J_{\mathds{C}}$ satisfies   (\ref{complex condition 1}) and (\ref{complex condition 2}) on $\mathfrak{q}$. Similarly, $J_{\mathds{C}}$ also satisfies  (\ref{complex condition 1}) and (\ref{complex condition 2}) on $\mathfrak{p}$. Since $\huaE_{\mathds{C}}$ is a Leibniz algebra direct sum of $\mathfrak{q}$ and $\mathfrak{p}$,  $J_{\mathds{C}}$ satisfies   (\ref{complex condition 1}) and (\ref{complex condition 2}) on $\huaE_{\mathds{C}}$. By the proof of Theorem \ref{complex to decomposition}, we obtain that $J\triangleq J_{\mathds{C}}{\mid_{\huaE}}$ is a strict complex structure on the real Leibniz algebra $(\huaE,[\cdot,\cdot]_\huaE)$.
\end{proof}

Let $J$ be an almost complex structure on the real Leibniz algebra $(\huaE,[\cdot,\cdot]_\huaE)$. We can define a complex vector space structure on the real vector space $\huaE$ by
\begin{eqnarray}\label{real to complex}
(a+bi)x\triangleq ax+bJx,\,\,\,\forall a,b\in\mathds{R},\,x\in\huaE.
\end{eqnarray}
Define two maps $\varphi:\huaE\rightarrow\huaE_i$ and $\psi:\huaE\rightarrow\huaE_{-i}$ as follows:
\begin{eqnarray*}
\varphi(x)&=&\frac{1}{2}(x-iJx),\\
\psi(x)&=&\frac{1}{2}(x+iJx).
\end{eqnarray*}
It is straightforward to deduce that $\varphi$ is complex linear isomorphism and $\psi=\sigma\circ\varphi$ is a complex antilinear isomorphism between complex vector spaces.

Let $J$ be a strict complex structure on the real Leibniz algebra $(\huaE,[\cdot,\cdot]_\huaE)$. By \eqref{complex condition 1}, \eqref{complex condition 2} and \eqref{real to complex}, we have
\begin{eqnarray*}
&&[(a+bi)x,y]_\huaE=[ax+bJx,y]_\huaE=a[x,y]_\huaE+b[Jx,y]_\huaE=a[x,y]_\huaE+bJ[x,y]_\huaE=(a+bi)[x,y]_\huaE,\\
&&[x,(a+bi)y]_\huaE=[x,ay+bJy]_\huaE=a[x,y]_\huaE+b[x,Jy]_\huaE=a[x,y]_\huaE+bJ[x,y]_\huaE=(a+bi)[x,y]_\huaE,
\end{eqnarray*}
which implies that $\huaE$ is a complex Leibniz algebra with the complex vector space structure defined above.

Conversely, let $(\huaE,[\cdot,\cdot]_\huaE)$ be a complex Leibniz algebra and let $\huaE_{\mathds{R}}$ be the underlying real Leibniz algebra. Then there is an endomorphism $J$ of $\huaE_{\mathds{R}}$ given by multiplication by $i$, that is, $Jx=ix$ for all $x\in\huaE_{\mathds{R}}$. It is straightforward to see that $J$ satisfies  \eqref{complex condition 1} and \eqref{complex condition 2} and $J^2=-\Id$, i.e. $J$ is a strict complex structure on $\huaE_{\mathds{R}}$, which means that the real Leibniz algebra underlying a complex structure naturally endows with a strict complex structure.

Let $J$ be a complex structure on $\huaE$. Define a new bracket $[\cdot,\cdot]_J:\huaE\otimes\huaE\rightarrow\huaE$ by
\begin{eqnarray}\label{new bracket}
[x,y]_J=\frac{1}{2}([x,y]_\huaE-[Jx,Jy]_\huaE),\,\,\,\forall x,y\in\huaE.
\end{eqnarray}

\begin{pro}\label{complex to strict complex}
Let $J$ be a complex structure on the real Leibniz algebra $(\huaE,[\cdot,\cdot]_\huaE)$. Then $(\huaE,[\cdot,\cdot]_J)$ is a real Leibniz algebra. Moreover, $J$ is a strict complex structure on  $(\huaE,[\cdot,\cdot]_J)$ and the corresponding complex Leibniz algebra is isomorphic to the complex Leibniz algebra $\huaE_i$.
\end{pro}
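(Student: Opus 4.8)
The plan is to exhibit the complex‑linear isomorphism $\varphi\colon\huaE\to\huaE_i$ introduced above, $\varphi(x)=\half(x-iJx)$, as an isomorphism carrying $[\cdot,\cdot]_J$ onto the Leibniz bracket of the subalgebra $\huaE_i\subset\huaE_{\mathds{C}}$. By Theorem~\ref{complex to decomposition}, $\huaE_i$ together with the restriction of $[\cdot,\cdot]_{\huaE_{\mathds{C}}}$ is a complex Leibniz algebra, so once this transport is in place every assertion of the statement follows formally. I will also use the elementary fact that $\varphi^{-1}(w)=w+\sigma(w)$ for $w\in\huaE_i$, where $\sigma$ is the conjugation with respect to the real form $\huaE$.

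First I would record the key identity: for $x,y\in\huaE$, the $\mathds{C}$-bilinearity of $[\cdot,\cdot]_{\huaE_{\mathds{C}}}$ gives
\begin{eqnarray*}
[\varphi(x),\varphi(y)]_{\huaE_{\mathds{C}}}
&=&\frac14\Big([x,y]_\huaE-[Jx,Jy]_\huaE-i\big([Jx,y]_\huaE+[x,Jy]_\huaE\big)\Big).
\end{eqnarray*}
By Theorem~\ref{complex to decomposition} the left‑hand side lies in the subalgebra $\huaE_i=\varphi(\huaE)$, so applying $w\mapsto w+\sigma(w)$ and recalling that $[x,y]_\huaE-[Jx,Jy]_\huaE\in\huaE$ is fixed by $\sigma$ while $i\big([Jx,y]_\huaE+[x,Jy]_\huaE\big)$ is sent to its negative by $\sigma$, we obtain
\begin{eqnarray*}
\varphi^{-1}\big([\varphi(x),\varphi(y)]_{\huaE_{\mathds{C}}}\big)
=\half\big([x,y]_\huaE-[Jx,Jy]_\huaE\big)=[x,y]_J
\end{eqnarray*}
by \eqref{new bracket}. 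Hence $\varphi\big([x,y]_J\big)=[\varphi(x),\varphi(y)]_{\huaE_{\mathds{C}}}$ for all $x,y\in\huaE$.

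Granting this identity, I would deduce the three claims at once. Since $\varphi$ is an $\mathds{R}$-linear bijection intertwining $[\cdot,\cdot]_J$ with the Leibniz bracket of $\huaE_i$, and the latter satisfies \eqref{Leibniz}, so does $[\cdot,\cdot]_J$; thus $(\huaE,[\cdot,\cdot]_J)$ is a real Leibniz algebra. Because $\varphi$ is moreover $\mathds{C}$-linear for the complex structure \eqref{real to complex} determined by $J$ and the bracket of $\huaE_i$ is $\mathds{C}$-bilinear, transporting the bracket through $\varphi$ shows that $[\cdot,\cdot]_J$ is $\mathds{C}$-bilinear with respect to $J$, i.e.\ $[Jx,y]_J=J[x,y]_J=[x,Jy]_J$; together with $J^2=-\Id$ this is exactly the statement that $J$ is a \emph{strict} complex structure on $(\huaE,[\cdot,\cdot]_J)$ in the sense of \eqref{complex condition 1} and \eqref{complex condition 2}. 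Finally, $\varphi$ is then by construction an isomorphism from the complex Leibniz algebra associated with $(\huaE,[\cdot,\cdot]_J)$ and $J$ onto $\huaE_i$.

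The argument is essentially routine computation; the only points requiring care are the bookkeeping of the factor $\half$ in $\varphi$ (equivalently, recovering $z$ from $\varphi(z)$ via $z=\varphi(z)+\sigma(\varphi(z))$), and the observation that integrability of $J$ is used precisely once — through Theorem~\ref{complex to decomposition}, to guarantee that $\huaE_i$ is closed under the bracket so that the transport is well defined. One could instead verify \eqref{Leibniz} for $[\cdot,\cdot]_J$ directly from \eqref{complex str}, but that is longer and does not by itself yield the strictness and isomorphism assertions.
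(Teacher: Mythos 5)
Your proposal is correct and follows essentially the same route as the paper: both hinge on showing $\varphi([x,y]_J)=[\varphi(x),\varphi(y)]_{\huaE_{\mathds{C}}}$ and on Theorem~\ref{complex to decomposition} guaranteeing that $\huaE_i$ is a subalgebra, then transporting the Leibniz identity through $\varphi$. The only (harmless) differences are cosmetic: you recover the identity via the conjugation trick $\varphi^{-1}(w)=w+\sigma(w)$ instead of invoking \eqref{complex str} inside the computation, and you obtain strictness from the $\mathds{C}$-linearity of $\varphi$ rather than by the paper's direct check of \eqref{complex condition 1} and \eqref{complex condition 2}.
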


\begin{proof}
By \eqref{complex str}, for all $x,y\in\huaE$, we have
\begin{eqnarray*}
[\varphi(x),\varphi(y)]_{\huaE_{\mathds{C}}}&=&\frac{1}{4}[x-iJx,y-iJy]_{\huaE_{\mathds{C}}}\\
&=&\frac{1}{4}([x,y]_\huaE-[Jx,Jy]_\huaE)-\frac{1}{4}i([Jx,y]_\huaE+[x,Jy]_\huaE)\\
&=&\frac{1}{4}([x,y]_\huaE-[Jx,Jy]_\huaE)-\frac{1}{4}iJ([x,y]_\huaE-[Jx,Jy]_\huaE)\\
&=&\frac{1}{2}([x,y]_J)-\frac{1}{2}iJ([x,y]_J)\\
&=&\varphi([x,y]_J).
\end{eqnarray*}
Since $J$ is a complex structure, $\huaE_i$ is a Leibniz subalgebra by Theorem \ref{complex to decomposition}. Thus $[x,y]_J=\varphi^{-1}[\varphi(x),\varphi(y)]_{\huaE_{\mathds{C}}}$ implies that $(\huaE,[\cdot,\cdot]_J)$ is a real Leibniz algebra. Furthermore, since $\varphi$ is a complex Leibniz algebra isomorphism, it follows that the complex Leibniz algebra $(\huaE,[\cdot,\cdot]_J)$ is isomorphic to the complex Leibniz algebra $\huaE_i$.

By (\ref{complex str}) and (\ref{new bracket}), for all $x,y\in\huaE$, we have
\begin{eqnarray*}
J[x,y]_J&=&\frac{1}{2}J([x,y]_\huaE-[Jx,Jy]_\huaE)=\frac{1}{2}([Jx,y]_\huaE+[x,Jy]_\huaE)\\
&=&[Jx,y]_J\\
&=&[x,Jy]_J,
\end{eqnarray*}
which implies that $J$ is a strict complex structure on  $(\huaE,[\cdot,\cdot]_J)$.
\end{proof}

\begin{pro}\label{strict complex equvialent}
Let $J$ be a complex structure on the real Leibniz algebra $(\huaE,[\cdot,\cdot]_\huaE)$. Then $J$ is a strict complex structure on $(\huaE,[\cdot,\cdot]_\huaE)$ if and only if $[\cdot,\cdot]_\huaE=[\cdot,\cdot]_J$.
\end{pro}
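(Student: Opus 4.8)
The plan is to prove both implications directly from the definitions, using the bracket $[\cdot,\cdot]_J$ introduced in \eqref{new bracket} and the characterization of strictness via \eqref{complex condition 1} and \eqref{complex condition 2}. First I would treat the ``only if'' direction. Suppose $J$ is a strict complex structure on $(\huaE,[\cdot,\cdot]_\huaE)$, so that $J[x,y]_\huaE=[Jx,y]_\huaE=[x,Jy]_\huaE$ for all $x,y\in\huaE$. Then from \eqref{complex condition 1} and \eqref{complex condition 2} together with $J^2=-\Id$ I compute
\begin{eqnarray*}
[Jx,Jy]_\huaE=J[x,Jy]_\huaE=J[Jx,y]_\huaE=J^2[x,y]_\huaE=-[x,y]_\huaE,
\end{eqnarray*}
so that $[x,y]_J=\frac12([x,y]_\huaE-[Jx,Jy]_\huaE)=\frac12([x,y]_\huaE+[x,y]_\huaE)=[x,y]_\huaE$, which is exactly $[\cdot,\cdot]_\huaE=[\cdot,\cdot]_J$.

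For the converse, assume $[\cdot,\cdot]_\huaE=[\cdot,\cdot]_J$, i.e. $[x,y]_\huaE=\frac12([x,y]_\huaE-[Jx,Jy]_\huaE)$ for all $x,y\in\huaE$. Rearranging immediately gives $[x,y]_\huaE=-[Jx,Jy]_\huaE$, so $J$ satisfies condition \eqref{product condition 3} in the complex sense; equivalently $[Jx,Jy]_\huaE=-[x,y]_\huaE$. I would then feed this back into the integrability condition \eqref{complex str}. Using $J^2=-\Id$ and replacing $x$ by $Jx$ in the identity $[Jx,Jy]_\huaE=-[x,y]_\huaE$ yields $[x,Jy]_\huaE=-[Jx,y]_\huaE$; combined with \eqref{complex str},
\begin{eqnarray*}
J[x,y]_\huaE=[Jx,y]_\huaE+[x,Jy]_\huaE+J[Jx,Jy]_\huaE=[Jx,y]_\huaE+[x,Jy]_\huaE-J[x,y]_\huaE,
\end{eqnarray*}
hence $2J[x,y]_\huaE=[Jx,y]_\huaE+[x,Jy]_\huaE$. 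Applying $J$ to $[x,Jy]_\huaE=-[Jx,y]_\huaE$ and using the relation just obtained (with $y$ replaced by $Jy$) lets me solve the linear system for $J[x,y]_\huaE$, producing $J[x,y]_\huaE=[Jx,y]_\huaE=[x,Jy]_\huaE$, which are precisely \eqref{complex condition 1} and \eqref{complex condition 2}. Therefore $J$ is a strict complex structure.

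The only mild subtlety — and the step I expect to require the most care — is the bookkeeping in the converse: extracting \eqref{complex condition 1} and \eqref{complex condition 2} separately from the single relation $[x,y]_\huaE=-[Jx,Jy]_\huaE$ plus \eqref{complex str} requires substituting $Jx,Jy$ for $x,y$ at the right moments and keeping track of the sign changes coming from $J^2=-\Id$. Alternatively, one can shortcut this by invoking Proposition \ref{complex to strict complex}: since $J$ is always a strict complex structure on $(\huaE,[\cdot,\cdot]_J)$, the hypothesis $[\cdot,\cdot]_\huaE=[\cdot,\cdot]_J$ transports strictness back to $(\huaE,[\cdot,\cdot]_\huaE)$ verbatim, and the ``only if'' direction follows from the computation of $[\cdot,\cdot]_J$ above. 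I would present the short self-contained computation as the main argument and mention the Proposition \ref{complex to strict complex} route as a remark.
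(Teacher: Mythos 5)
Your ``only if'' direction is correct and is essentially the computation in the paper. The problem lies in the self-contained converse that you present as the main argument: substituting $x\mapsto Jx$ in $[Jx,Jy]_\huaE=-[x,y]_\huaE$ gives $[J^2x,Jy]_\huaE=-[Jx,y]_\huaE$, i.e. $-[x,Jy]_\huaE=-[Jx,y]_\huaE$, so the correct consequence is
\begin{eqnarray*}
[x,Jy]_\huaE=[Jx,y]_\huaE,
\end{eqnarray*}
with a plus sign, not $[x,Jy]_\huaE=-[Jx,y]_\huaE$ as you claim. This is not a harmless slip for the chain of reasoning you set up: feeding your relation into the identity $2J[x,y]_\huaE=[Jx,y]_\huaE+[x,Jy]_\huaE$ (which you do derive correctly from \eqref{complex str} and $[Jx,Jy]_\huaE=-[x,y]_\huaE$) would give $J[x,y]_\huaE=0$ and hence $[x,y]_\huaE=0$, i.e. your ``linear system'' would only show that $\huaE$ is abelian, not that $J$ is strict. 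With the corrected sign the argument closes immediately and is in fact shorter than you anticipated: $2J[x,y]_\huaE=[Jx,y]_\huaE+[x,Jy]_\huaE=2[Jx,y]_\huaE=2[x,Jy]_\huaE$ yields \eqref{complex condition 1} and \eqref{complex condition 2} at once, with no further substitutions needed.

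Note also that the ``shortcut'' you relegate to a remark --- $J$ is always a strict complex structure on $(\huaE,[\cdot,\cdot]_J)$ by Proposition \ref{complex to strict complex}, so the hypothesis $[\cdot,\cdot]_\huaE=[\cdot,\cdot]_J$ transports strictness verbatim --- is exactly the paper's own proof of the converse. If you promote that to the main argument your write-up coincides with the paper's, and the direct computation (once the sign is fixed) is a perfectly good self-contained alternative.
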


\begin{proof}
Let $J$ be a strict complex structure on $(\huaE,[\cdot,\cdot]_\huaE)$, then by \eqref{complex condition 1} and \eqref{complex condition 2}, we have
$$[x,y]_J=\frac{1}{2}([x,y]_\huaE-[Jx,Jy]_\huaE)
=\frac{1}{2}([x,y]_\huaE-J[x,Jy]_\huaE)=\frac{1}{2}([x,y]_\huaE-J^2[x,y]_\huaE)
=[x,y]_\huaE.$$

Conversely, if $[\cdot,\cdot]_\huaE=[\cdot,\cdot]_J$, then by Proposition \ref{complex to strict complex} we have
\begin{eqnarray*}
J[x,y]_\huaE&=&J[x,y]_J=[Jx,y]_J=[Jx,y]_\huaE,\\
J[x,y]_\huaE&=&J[x,y]_J=[x,Jy]_J=[x,Jy]_\huaE,
\end{eqnarray*}
which implies that $J$ is a strict complex structure on $(\huaE,[\cdot,\cdot]_\huaE)$.
\end{proof}

\begin{pro}
Let $J$ be an almost complex structure on the real Leibniz algebra $(\huaE,[\cdot,\cdot]_\huaE)$. If $J$ satisfies
\begin{eqnarray}\label{complex condition 3}
[x,y]_\huaE=[Jx,Jy]_\huaE,\,\,\,\forall x,y\in\huaE,
\end{eqnarray}
then $J$ is a complex structure on $(\huaE,[\cdot,\cdot]_\huaE)$.
\end{pro}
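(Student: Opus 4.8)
The plan is to verify the integrability condition \eqref{complex str} by a direct computation from the hypothesis \eqref{complex condition 3} together with $J^2=-\Id$, exactly parallel to the treatment of the abelian product structure condition \eqref{product condition 3} given above; the only structural difference is that here the sign in \eqref{complex condition 3} is $+$ while $J^2=-\Id$, so the bookkeeping of signs is the mirror image of that earlier case.

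First I would simplify the first two terms on the right-hand side of \eqref{complex str}. Applying \eqref{complex condition 3} to the pair $(Jx,y)$ yields $[Jx,y]_\huaE=[J^2x,Jy]_\huaE=-[x,Jy]_\huaE$, so that $[Jx,y]_\huaE+[x,Jy]_\huaE=0$. One must be careful here \emph{not} to invoke skew-symmetry of the bracket, since a Leibniz bracket need not be skew-symmetric; the computation only ever substitutes into \eqref{complex condition 3} and uses bilinearity and $J^2=-\Id$.

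Next I would dispose of the remaining term: applying \eqref{complex condition 3} to the pair $(x,y)$ gives $[Jx,Jy]_\huaE=[x,y]_\huaE$, hence $J[Jx,Jy]_\huaE=J[x,y]_\huaE$. Combining the two contributions, the right-hand side of \eqref{complex str} becomes $0+J[x,y]_\huaE=J[x,y]_\huaE$, which is precisely the left-hand side; together with $J^2=-\Id$ this shows that $J$ is a complex structure on $(\huaE,[\cdot,\cdot]_\huaE)$. I expect no genuine obstacle: the whole argument is a two-step substitution, and the only point requiring attention is tracking the sign produced by $J^2=-\Id$ in the first step, which is what makes the hypothesis $[x,y]_\huaE=[Jx,Jy]_\huaE$ (with a plus sign) play the role that $[x,y]_\huaE=-[Ex,Ey]_\huaE$ played for product structures.
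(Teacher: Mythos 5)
Your proof is correct and is essentially the same computation as the paper's: you rewrite $[Jx,y]_\huaE$ via the hypothesis applied to $(Jx,y)$ as $[J^2x,Jy]_\huaE=-[x,Jy]_\huaE$, cancel it against $[x,Jy]_\huaE$, and identify $J[Jx,Jy]_\huaE$ with $J[x,y]_\huaE$, exactly as in the paper. No gaps.
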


\begin{proof}
By \eqref{complex condition 3} and $J^2=-\Id$, we have
\begin{eqnarray*}
[Jx,y]_{\huaE}+[x,Jy]_{\huaE}+J[Jx,Jy]_{\huaE}&=&[J^2 x,Jy]_{\huaE}+[x,Jy]_{\huaE}+J[Jx,Jy]_{\huaE}\\
&=&-[x,Jy]_{\huaE}+[x,Jy]_{\huaE}+J[Jx,Jy]_{\huaE}\\
&=&J[x,y]_\huaE,
\end{eqnarray*}
which implies that $J$ is a complex structure on $(\huaE,[\cdot,\cdot]_\huaE)$.
\end{proof}

\begin{defi}
An almost complex structure on the real Leibniz algebra $(\huaE,[\cdot,\cdot]_\huaE)$ is called an ${\bf abelian~complex~structure}$ if \eqref{complex condition 3} holds.
\end{defi}

\begin{cor}\label{complex to abelian str}
Let $(\huaE,[\cdot,\cdot]_\huaE)$ be a real Leibniz algebra with an abelian complex structure $J$. Then $(\huaE,[\cdot,\cdot]_J)$ is a real abelian  Leibniz algebra.
\end{cor}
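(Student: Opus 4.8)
The plan is to observe that an abelian complex structure is in particular a complex structure (this is exactly the content of the proposition immediately preceding the definition of abelian complex structure), so Proposition \ref{complex to strict complex} already guarantees that $(\huaE,[\cdot,\cdot]_J)$ is a genuine real Leibniz algebra. Thus the only thing left to check is that the bracket $[\cdot,\cdot]_J$ is trivial, i.e. that the Leibniz algebra $(\huaE,[\cdot,\cdot]_J)$ is abelian.

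First I would recall the definition \eqref{new bracket}, namely $[x,y]_J=\tfrac12\big([x,y]_\huaE-[Jx,Jy]_\huaE\big)$ for all $x,y\in\huaE$. Then I would invoke the defining condition \eqref{complex condition 3} of an abelian complex structure, $[x,y]_\huaE=[Jx,Jy]_\huaE$, and substitute it directly into the formula for $[\cdot,\cdot]_J$. This yields $[x,y]_J=\tfrac12\big([x,y]_\huaE-[x,y]_\huaE\big)=0$ for all $x,y\in\huaE$, so the bracket $[\cdot,\cdot]_J$ vanishes identically.

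Combining the two observations, $(\huaE,[\cdot,\cdot]_J)$ is a real Leibniz algebra whose bracket is zero, hence a real abelian Leibniz algebra, which is the assertion. There is essentially no obstacle here: the computation is a one-line substitution, and the only subtlety worth spelling out is the appeal to Proposition \ref{complex to strict complex} to know in advance that $[\cdot,\cdot]_J$ satisfies the Leibniz identity — otherwise "abelian Leibniz algebra" would not yet be meaningful — though of course once one knows the bracket is zero the Leibniz identity holds trivially anyway.
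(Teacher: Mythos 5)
Your argument is correct and is exactly the (implicit) reasoning behind the corollary in the paper: the preceding proposition makes an abelian complex structure a genuine complex structure, so Proposition \ref{complex to strict complex} gives the Leibniz identity for $[\cdot,\cdot]_J$, and substituting condition \eqref{complex condition 3} into \eqref{new bracket} makes the bracket vanish identically. Nothing is missing.
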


\begin{pro}
Let $(\huaE,[\cdot,\cdot]_\huaE)$ be a real Leibniz algebra. Then there exists an abelian complex structure on $(\huaE,[\cdot,\cdot]_\huaE)$ if and only if $\huaE_{\mathds{C}}$ admits a decomposition:
$$\huaE_{\mathds{C}}=\mathfrak{q}\oplus \mathfrak{p},$$
where $\mathfrak{q}$ and $\mathfrak{p}=\sigma(\mathfrak{q})$ are complex abelian subalgebras of $\huaE_{\mathds{C}}$.
\end{pro}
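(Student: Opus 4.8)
The plan is to imitate the proof of the analogous characterization of abelian product structures, carrying everything out on the complexification $\huaE_{\mathds{C}}$ and then restricting to the real form $\huaE$. Both implications reduce to routine $\mathds{C}$-bilinear expansions once the right preliminary identity is in hand.

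For the ``only if'' direction, suppose $J$ is an abelian complex structure on $\huaE$. Since \eqref{complex condition 3} implies the integrability condition \eqref{complex str}, $J$ is in particular a complex structure, so Theorem \ref{complex to decomposition} gives $\huaE_{\mathds{C}}=\huaE_i\oplus\huaE_{-i}$ with $\huaE_{-i}=\sigma(\huaE_i)$ complex subalgebras, where $\huaE_i=\{x-iJx\mid x\in\huaE\}$. The first step is to observe that combining \eqref{complex str} with \eqref{complex condition 3} forces $[Jx,y]_\huaE+[x,Jy]_\huaE=0$ for all $x,y\in\huaE$ (substitute $[Jx,Jy]_\huaE=[x,y]_\huaE$ into the integrability identity and cancel the term $J[x,y]_\huaE$). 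Then for $X=x-iJx$ and $Y=y-iJy$ in $\huaE_i$, expanding $[X,Y]_{\huaE_{\mathds{C}}}$ by $\mathds{C}$-bilinearity and using both $[x,y]_\huaE=[Jx,Jy]_\huaE$ and $[Jx,y]_\huaE+[x,Jy]_\huaE=0$ yields $[X,Y]_{\huaE_{\mathds{C}}}=0$; hence $\huaE_i$ is abelian, and applying the automorphism $\sigma$ shows $\huaE_{-i}$ is abelian as well. Setting $\mathfrak{q}=\huaE_i$ and $\mathfrak{p}=\huaE_{-i}=\sigma(\mathfrak{q})$ gives the desired decomposition.

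For the ``if'' direction, suppose $\huaE_{\mathds{C}}=\mathfrak{q}\oplus\mathfrak{p}$ with $\mathfrak{p}=\sigma(\mathfrak{q})$ complex abelian subalgebras. By the converse direction of Theorem \ref{complex to decomposition} one obtains the complex linear endomorphism $J_{\mathds{C}}$ on $\huaE_{\mathds{C}}$ defined by \eqref{complex JC}, which satisfies $J_{\mathds{C}}^2=-\Id$ and $J_{\mathds{C}}\circ\sigma=\sigma\circ J_{\mathds{C}}$, so $J\triangleq J_{\mathds{C}}{\mid_{\huaE}}$ is a well-defined almost complex structure on $\huaE$. It remains to verify \eqref{complex condition 3}, and for this it suffices to check $[\mathcal{X},\mathcal{Y}]_{\huaE_{\mathds{C}}}=[J_{\mathds{C}}\mathcal{X},J_{\mathds{C}}\mathcal{Y}]_{\huaE_{\mathds{C}}}$ for all $\mathcal{X},\mathcal{Y}\in\huaE_{\mathds{C}}$ and then restrict to $\huaE$. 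Writing $\mathcal{X}=X_1+\sigma(Y_1)$ and $\mathcal{Y}=X_2+\sigma(Y_2)$ with $X_i,Y_i\in\mathfrak{q}$, the abelianness of $\mathfrak{q}$ and $\mathfrak{p}$ annihilates the terms $[X_1,X_2]_{\huaE_{\mathds{C}}}$ and $[\sigma(Y_1),\sigma(Y_2)]_{\huaE_{\mathds{C}}}$, so $[\mathcal{X},\mathcal{Y}]_{\huaE_{\mathds{C}}}=[X_1,\sigma(Y_2)]_{\huaE_{\mathds{C}}}+[\sigma(Y_1),X_2]_{\huaE_{\mathds{C}}}$; since $J_{\mathds{C}}\mathcal{X}=iX_1-i\sigma(Y_1)$ and $J_{\mathds{C}}\mathcal{Y}=iX_2-i\sigma(Y_2)$, expanding $[J_{\mathds{C}}\mathcal{X},J_{\mathds{C}}\mathcal{Y}]_{\huaE_{\mathds{C}}}$ with $i^2=-1$ and using the same vanishing produces exactly the same expression. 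Hence $J$ satisfies \eqref{complex condition 3}, i.e. it is an abelian complex structure on $\huaE$.

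The computations are entirely routine bilinear manipulations; the only point needing a little care is in the forward direction, where one must first extract the auxiliary identity $[Jx,y]_\huaE+[x,Jy]_\huaE=0$ from the conjunction of \eqref{complex str} and \eqref{complex condition 3} before the eigenspaces can be recognized as abelian. I do not anticipate any genuine obstacle.
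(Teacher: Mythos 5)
Your proof is correct; both implications go through. The route differs from the paper's mainly in how abelianness is transferred between $J$ and the eigenspaces. The paper does not redo any expansion: it invokes Proposition \ref{complex to strict complex}, which already provides the complex Leibniz algebra isomorphisms $\varphi:(\huaE,[\cdot,\cdot]_J)\rightarrow\huaE_i$ and $\psi:(\huaE,[\cdot,\cdot]_J)\rightarrow\huaE_{-i}$, together with Corollary \ref{complex to abelian str}; since $[x,y]_J=\frac{1}{2}([x,y]_\huaE-[Jx,Jy]_\huaE)$ vanishes identically exactly when $J$ is abelian, the eigenspaces are abelian iff $J$ is, and the converse direction is the same observation read backwards ($\mathfrak q$ abelian forces $[\cdot,\cdot]_J=0$, hence \eqref{complex condition 3}). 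You instead carry out the computation by hand: in the forward direction you first extract the auxiliary identity $[Jx,y]_\huaE+[x,Jy]_\huaE=0$ from \eqref{complex str} and \eqref{complex condition 3} and then expand $[x-iJx,y-iJy]_{\huaE_{\mathds C}}$ directly, and in the converse direction you verify \eqref{complex condition 3} on all of $\huaE_{\mathds C}$ by bilinearity before restricting to $\huaE$. Your version is more self-contained and elementary (in effect you re-derive the content of Proposition \ref{complex to strict complex} in this special case), while the paper's is shorter and makes the structural point — $\huaE_{\pm i}$ are copies of $(\huaE,[\cdot,\cdot]_J)$ — explicit. One small point you use implicitly when "applying $\sigma$": $\sigma$ preserves the bracket of $\huaE_{\mathds C}$ (it is an $\mathds R$-linear algebra automorphism, being the conjugation of a complex-linearly extended real bracket); this is immediate but worth a sentence, or you can repeat the expansion on $x+iJx$ with the same two identities.
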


\begin{proof}
Let $J$ be an abelian complex structure on $(\huaE,[\cdot,\cdot]_\huaE)$. By Proposition \ref{complex to decomposition}, we have $\huaE_{\mathds{C}}=\huaE_i\oplus\huaE_{-i}$. Since $\varphi:(\huaE,[\cdot,\cdot]_J)\rightarrow(\huaE_i,[\cdot,\cdot]_{\huaE_{\mathds{C}}})$ is a complex Leibniz algebra isomorphism by Proposition \ref{complex to strict complex}, then Corollary \ref{complex to abelian str} implies that $\huaE_i$ is an abelian subalgebra of $\huaE_{\mathds{C}}$. Similarly, we can also show that $\psi:(\huaE,[\cdot,\cdot]_J)\rightarrow(\huaE_{-i},[\cdot,\cdot]_{\huaE_{\mathds{C}}})$ is a complex Leibniz algebra isomorphism by the proof of Proposition \ref{complex to strict complex}, which implies that $\huaE_{-i}$ is an abelian subalgebra.

Conversely, there is a complex structure $J$ on $\huaE$ by Proposition \ref{complex to decomposition}. Then we obtain a complex Leibniz algebra isomorphism $\varphi$ from $(\huaE,[\cdot,\cdot]_J)$ to $(\mathfrak{q},[\cdot,\cdot]_{\huaE_{\mathds{C}}})$ by Proposition \ref{complex to strict complex}. Thus, $(\huaE,[\cdot,\cdot]_J)$ is an abelian Leibniz algebra. By the definition of $[\cdot,\cdot]_J$, $J$ is an abelian complex structure on $\huaE$.
\end{proof}

\begin{ex}
Consider the $4$-dimensional real Leibniz algebra $(\huaE,[\cdot,\cdot]_{\huaE})$ given with respect to the basis $\{e_1,e_2,e_3,e_4\}$ by:
\begin{eqnarray*}
[e_1,e_1]_{\huaE}=[e_2,e_2]_{\huaE}=e_3.
\end{eqnarray*}
Then
\begin{center}
$J_1=$
$\begin{pmatrix}
0 & -1 & 0 & 0\\
1 & 0 & 0 & 0\\
0 & 0 & 0 & -1\\
0 & 0 & 1 & 0
\end{pmatrix}$,\,\,\,
$J_2=$
$\begin{pmatrix}
0 & -1 & 0 & 0\\
1 & 0 & 0 & 0\\
0 & 0 & 0 & 1\\
0 & 0 & -1 & 0
\end{pmatrix}$,\,\,\,
$J_3=$
$\begin{pmatrix}
0 & 1 & 0 & 0\\
-1 & 0 & 0 & 0\\
0 & 0 & 0 & -1\\
0 & 0 & 1 & 0
\end{pmatrix}$,\,\,\,
$J_4=$
$\begin{pmatrix}
0 & 1 & 0 & 0\\
-1 & 0 & 0 & 0\\
0 & 0 & 0 & 1\\
0 & 0 & -1 & 0
\end{pmatrix}$
\end{center}
are abelian complex structures on ${\huaE}$.
\end{ex}

\section{Para-K\"{a}hler structures on Leibniz algebras}\label{sec:sympro}

In this section, we introduce the notion of a para-K\"{a}hler structure on a Leibniz algebras by adding a compatibility condition between a symplectic structure and a paracomplex structure on the Leibniz algebra. A para-K\"{a}hler structure gives rise to a pseudo-Riemannian structure. We introduce the notion of a Levi-Civita product associated to a pseudo-Riemannian Leibniz algebra and give its precise formulas using the decomposition of the original Leibniz algebra.

\begin{defi}
Let $\huaB$ be a symplectic structure and $E$ a paracomplex structure on the Leibniz algebra $(\huaE,[\cdot,\cdot]_\huaE)$. The triple $(\huaE,\huaB,E)$ is called a {\bf para-K\"{a}hler Leibniz algebra} if the following equality holds:
\begin{eqnarray}
\huaB(Ex,Ey)=-\huaB(x,y),\,\,\,\forall x,y\in\huaE.\label{para-Kahler str}
\end{eqnarray}
\end{defi}

\begin{ex}
Consider the symplectic structures and the product structures on the $4$-dimensional Leibniz algebra $({\huaE},[\cdot,\cdot]_{\huaE})$ given in {\rm Example \ref{sym example 1}} and {\rm Example \ref{pro example 1}} respectively. By direct computation, we obtain that $\{\huaB,E_i\}$ for $i=1,2$ are para-K\"{a}hler structures on ${\huaE}$ if and only if the nondegenerate and symmetric matrix of $\huaB$ is of the following form:
\begin{center}
$\begin{pmatrix}
0 & 0 & * & *\\
0 & 0 & * & 0\\
* & * & 0 & 0\\
* & 0 & 0 & 0
\end{pmatrix}$.
\end{center}
\end{ex}

\begin{pro}
Let $(\huaE,\huaB,E)$ be a para-K\"{a}hler Leibniz algebra.
\begin{enumerate}[{\rm (i)}]
  \item If $E$ is strict, then the compatible Leibniz-dendriform algebra structure defined by $\huaB$ on $\huaE$ satisfies
      \begin{eqnarray*}
       E(x\lhd y)=x\lhd Ey,\,\,\,E(x\rhd y)=Ex\rhd y,\,\,\,\forall x,y\in\huaE.
      \end{eqnarray*}
  \item If $E$ is abelian, then the compatible Leibniz-dendriform algebra structure defined by $\huaB$ on $\huaE$ satisfies
      \begin{eqnarray*}
       E(x\lhd y)=Ex\lhd y,\,\,\,E(x\rhd y)=x\rhd Ey,\,\,\,\forall x,y\in\huaE.
      \end{eqnarray*}
\end{enumerate}
\end{pro}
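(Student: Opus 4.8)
The plan is to use the nondegeneracy of $\huaB$: to prove an identity such as $E(x\lhd y)=x\lhd Ey$ it suffices to check that $\huaB(E(x\lhd y),z)=\huaB(x\lhd Ey,z)$ for every $z\in\huaE$, after which both sides can be unwound using the defining formulas \eqref{sym-to-LeiDen1} and \eqref{sym-to-LeiDen2} of the compatible Leibniz-dendriform structure, together with the bracket identities coming either from strictness or from the abelian condition.

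The one preliminary observation that makes everything work is that, since $E^2=\Id$, the para-K\"ahler compatibility \eqref{para-Kahler str} is equivalent to
\begin{eqnarray*}
\huaB(Ex,y)=-\huaB(x,Ey),\,\,\,\forall x,y\in\huaE;
\end{eqnarray*}
indeed, replacing $y$ by $Ey$ in \eqref{para-Kahler str} and using $E^2=\Id$ yields this, and conversely. In other words $E$ is skew-adjoint with respect to $\huaB$, and this is the only place the para-K\"ahler hypothesis is used.

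For (i), assume $E$ is strict, so \eqref{product condition 1} and \eqref{product condition 2} give $E[u,v]_\huaE=[Eu,v]_\huaE=[u,Ev]_\huaE$. Then, for any $z\in\huaE$, using skew-adjointness of $E$ and \eqref{sym-to-LeiDen1},
\begin{eqnarray*}
\huaB(E(x\lhd y),z)=-\huaB(x\lhd y,Ez)=\huaB(y,[x,Ez]_\huaE)=\huaB(y,E[x,z]_\huaE)=-\huaB(Ey,[x,z]_\huaE)=\huaB(x\lhd Ey,z),
\end{eqnarray*}
and a completely parallel computation starting from \eqref{sym-to-LeiDen2}, this time pulling $E$ out of both brackets $[y,Ez]_\huaE$ and $[Ez,y]_\huaE$ by strictness, gives $\huaB(E(x\rhd y),z)=\huaB(Ex\rhd y,z)$. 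Nondegeneracy of $\huaB$ then yields both identities in (i). For (ii), assume $E$ is abelian, so \eqref{product condition 3} and $E^2=\Id$ give $[u,Ev]_\huaE=-[Eu,v]_\huaE$. The same pairing strategy applies verbatim, except that instead of pulling $E$ out of a bracket one swaps which slot $E$ occupies at the cost of a sign; for instance $\huaB(E(x\lhd y),z)=\huaB(y,[x,Ez]_\huaE)=-\huaB(y,[Ex,z]_\huaE)=\huaB(Ex\lhd y,z)$, and similarly for $\rhd$ via \eqref{sym-to-LeiDen2}.

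I do not expect a genuine obstacle: the argument is a short bilinear-form manipulation, and the only thing requiring care is tracking the two signs — one from skew-adjointness of $E$, one from the strict or abelian bracket identity — and verifying that they combine to produce exactly the claimed right-hand side in each of the four cases. The real content is the initial reformulation of \eqref{para-Kahler str} as skew-adjointness of $E$; once that is in place, the rest is bookkeeping.
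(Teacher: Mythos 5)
Your argument is correct and is essentially the paper's own proof: the same pairing with an arbitrary $z$, the same use of the skew-adjointness $\huaB(Ex,y)=-\huaB(x,Ey)$ derived from \eqref{para-Kahler str} and $E^2=\Id$, the defining relations \eqref{sym-to-LeiDen1}--\eqref{sym-to-LeiDen2}, the strict or abelian bracket identities, and nondegeneracy of $\huaB$; your chain of equalities in case (i) coincides step by step with the paper's, and your sketch of (ii) (which the paper leaves to the reader) is also correct.
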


\begin{proof}
  (i)  For all $x,y,z\in\huaE$, we have
\begin{eqnarray*}
\huaB(E(x\lhd y),z)&=&-\huaB(x\lhd y,Ez)=\huaB(y,[x,Ez]_\huaE)=\huaB(y,E[x,z]_\huaE)\\
&=&-\huaB(Ey,[x,z]_\huaE)=\huaB(x\lhd Ey,z),\\
\huaB(E(x\rhd y),z)&=&-\huaB(x\rhd y,Ez)=-\huaB(x,[y,Ez]_\huaE+[Ez,y]_\huaE)\\
&=&-\huaB(x,E([y,z]_\huaE+[z,y]_\huaE))\\
&=&\huaB(Ex,[y,z]_\huaE+[z,y]_\huaE)\\
&=&\huaB(Ex\rhd y,z),
\end{eqnarray*}
which implies that
\begin{eqnarray*}
       E(x\lhd y)=x\lhd Ey,\,\,\,E(x\rhd y)=Ex\rhd y,\,\,\,\forall x,y\in\huaE.
      \end{eqnarray*}

 (ii) The proof is similar to (i), which is left to readers.
\end{proof}

\begin{thm}\label{para-Kahler if and only if}
Let $(\huaE,\huaB)$ be a symplectic Leibniz algebra. Then there exists a paracomplex structure $E$ on $\huaE$ such that $(\huaE,\huaB,E)$ is a para-K\"{a}hler Leibniz algebra if and only if there exist two isotropic Leibniz algebras $\huaE_+$ and $\huaE_-$ such that $\huaE=\huaE_+\oplus\huaE_-$ as the direct sum of vector spaces.
\end{thm}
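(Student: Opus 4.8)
The plan is to use the correspondence between paracomplex structures and eigenspace decompositions from Theorem \ref{product to decomposition}, and to observe that the compatibility \eqref{para-Kahler str} is, under this correspondence, precisely equivalent to isotropy of the two eigenspaces. Thus the proof will be a transfer of \eqref{para-Kahler str} back and forth along the assignment $E \leftrightarrow (\huaE_+,\huaE_-)$, with one genuinely non-formal ingredient: a dimension count.

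For the ``only if'' direction I would begin with a para-K\"{a}hler Leibniz algebra $(\huaE,\huaB,E)$. Since $E$ is in particular a product structure, Theorem \ref{product to decomposition} already supplies the decomposition $\huaE = \huaE_+ \oplus \huaE_-$ into the $\pm 1$-eigenspaces of $E$, which are subalgebras; paracomplexity gives $\dim\huaE_+ = \dim\huaE_-$. To see that $\huaE_+$ is isotropic, take $x,y\in\huaE_+$, so $Ex = x$ and $Ey = y$; then \eqref{para-Kahler str} reads $\huaB(x,y) = \huaB(Ex,Ey) = -\huaB(x,y)$, and since $\mathds{K}$ has characteristic zero this forces $\huaB(x,y)=0$. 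The same computation on $\huaE_-$ (where $E$ acts as $-\Id$, and the two signs cancel) shows $\huaE_-$ is isotropic. Hence the required pair of isotropic subalgebras is exactly the eigenspace pair.

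For the converse I would take a decomposition $\huaE = \huaE_+ \oplus \huaE_-$ into isotropic subalgebras and define $E$ by $E(x+\xi) = x-\xi$ for $x\in\huaE_+$, $\xi\in\huaE_-$, as in \eqref{decomposition to product}; Theorem \ref{product to decomposition} then guarantees that $E$ is a product structure with eigenspaces $\huaE_\pm$. Two things remain. First, $E$ must be \emph{paracomplex}: this is where the dimension count enters, since a nondegenerate symmetric $\huaB$ makes each isotropic $\huaE_\pm$ satisfy $\huaE_\pm \subseteq \huaE_\pm^{\perp}$, hence $\dim\huaE_\pm \le \frac{1}{2}\dim\huaE$, and as $\dim\huaE_+ + \dim\huaE_- = \dim\huaE$ both must equal $\frac{1}{2}\dim\huaE$. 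Second, the compatibility \eqref{para-Kahler str}: writing $x = x_+ + x_-$ and $y = y_+ + y_-$ with $x_\pm,y_\pm\in\huaE_\pm$, isotropy kills the terms $\huaB(x_+,y_+)$ and $\huaB(x_-,y_-)$, so that both $\huaB(Ex,Ey)$ and $-\huaB(x,y)$ reduce to $-\huaB(x_+,y_-) - \huaB(x_-,y_+)$, and hence are equal.

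I do not expect a serious obstacle: beyond the bookkeeping with $E^2 = \Id$ and bilinearity, the only real input is the elementary fact that an isotropic subspace of a nondegenerate symmetric bilinear form on a finite-dimensional space has dimension at most half the ambient dimension, which is exactly what converts the hypothesis ``isotropic'' into the paracomplex (equal-dimension) requirement of Definition \ref{def of paracomplex str}.
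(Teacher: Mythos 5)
Your proposal is correct and follows essentially the same route as the paper's proof: eigenspace decomposition via Theorem \ref{product to decomposition}, isotropy from the identity $\huaB(x,y)=\huaB(Ex,Ey)=-\huaB(x,y)$ in the forward direction, and in the converse the map \eqref{decomposition to product} together with the compatibility check on mixed terms. Your explicit dimension count ($\dim\huaE_\pm\le\frac{1}{2}\dim\huaE$ for isotropic subspaces) just spells out what the paper asserts in one line, so there is nothing to add.
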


\begin{proof}
Let $(\huaE,\huaB,E)$ be a para-K\"{a}hler Leibniz algebra. Since $E$ is a paracomplex structure on $\huaE$, we have $\huaE=\huaE_+\oplus\huaE_-$, where $\huaE_+$ and $\huaE_-$ are Leibniz subalgebras of $\huaE$. For all $x_1,x_2\in\huaE_+$, we have $\huaB(x_1,x_2)=\huaB(Ex_1,Ex_2)=-\huaB(x_1,x_2)$ by (\ref{para-Kahler str}), which implies that $\huaB(\huaE_+,\huaE_+)=0$. Thus, $\huaE_+$ is isotropic. Similarly, $\huaE_-$ is also isotropic.

Conversely, since $\huaE=\huaE_+\oplus\huaE_-$ as the direct sum of vector spaces, where $\huaE_+$ and $\huaE_-$ are subalgebras, there is a product structure $E$ on $\huaE$ defined by (\ref{decomposition to product}). Since $\huaE_+$ and $\huaE_-$ are isotropic, we obtain that
dim $\huaE_+$ = dim $\huaE_-$. Thus $E$ is a paracomplex structure on $\huaE$. For all $x_1,x_2\in\huaE_+$ and $\xi_1,\xi_2\in\huaE_-$, we have
\begin{eqnarray*}
\huaB(E(x_1+\xi_1),E(x_2+\xi_2))&=&\huaB(x_1-\xi_1,x_2-\xi_2)=-\huaB(x_1,\xi_2)-\huaB(\xi_2,x_1)\\
&=&-\huaB(x_1+\xi_1,x_2+\xi_2).
\end{eqnarray*}
Thus, $(\huaE,\huaB,E)$ is a para-K\"{a}hler Leibniz algebra.
\end{proof}

\begin{pro}\label{phase space to para-Kahler}
Let $(\huaE,[\cdot,\cdot]_\huaE)$ be a Leibniz algebra and $(\huaE\oplus\huaE^*,\huaB)$ its phase space, where $\huaB$ is given by \eqref{phase space symplectic}. Then $E:\huaE\oplus\huaE^*\rightarrow\huaE\oplus\huaE^*$ defined by
\begin{eqnarray*}
E(x+\xi)=x-\xi,\,\,\,\forall x\in\huaE,\xi\in\huaE^*
\end{eqnarray*}
is a paracomplex structure and $(\huaE\oplus\huaE^*,\huaB,E)$ is a para-K\"{a}hler Leibniz algebra.
\end{pro}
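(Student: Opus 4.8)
The statement is essentially a special case of Theorem~\ref{para-Kahler if and only if} applied to the phase space $(\huaE\oplus\huaE^*,\huaB)$, so my plan is to verify directly that the hypotheses of that theorem are met, and then read off the conclusion. First I would recall from Theorem~\ref{phase space and Lei-Den} (or rather its proof) that a phase space $\huaE\oplus\huaE^*$ comes with a compatible Leibniz-dendriform structure, and in particular that $\huaE$ and $\huaE^*$ are both Leibniz subalgebras of $(\huaE\oplus\huaE^*,[\cdot,\cdot])$ — this is built into the definition of a phase space. Next, since $\huaB$ is given by \eqref{phase space symplectic}, namely $\huaB(x+\xi,y+\eta)=\langle x,\eta\rangle+\langle\xi,y\rangle$, it is immediate that $\huaB(x_1,x_2)=0$ for all $x_1,x_2\in\huaE$ and $\huaB(\xi_1,\xi_2)=0$ for all $\xi_1,\xi_2\in\huaE^*$, so $\huaE$ and $\huaE^*$ are isotropic subalgebras and $\huaE\oplus\huaE^*$ is their direct sum as vector spaces.

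With these two facts in hand, I would invoke Theorem~\ref{para-Kahler if and only if}: the decomposition $\huaE\oplus\huaE^* = \huaE\oplus\huaE^*$ into isotropic Leibniz subalgebras produces, via \eqref{decomposition to product}, a paracomplex structure $E$ with $(\huaE\oplus\huaE^*,\huaB,E)$ a para-K\"ahler Leibniz algebra. It remains only to check that the $E$ produced by \eqref{decomposition to product} — which sends $x+\xi\mapsto x-\xi$ since $\huaE$ and $\huaE^*$ are the $\pm1$-eigenspaces — coincides with the map in the statement; this is a tautology. Alternatively, to keep the argument self-contained I would give the two-line direct verification: $E^2=\Id$ is clear, $E$ is a product structure by Theorem~\ref{product to decomposition} since $\huaE,\huaE^*$ are subalgebras, $\dim\huaE=\dim\huaE^*$ forces $E$ to be paracomplex, and finally for $x_i\in\huaE$, $\xi_i\in\huaE^*$ one computes
\[
\huaB(E(x_1+\xi_1),E(x_2+\xi_2))=\huaB(x_1-\xi_1,x_2-\xi_2)=-\langle x_1,\xi_2\rangle-\langle\xi_1,x_2\rangle=-\huaB(x_1+\xi_1,x_2+\xi_2),
\]
which is exactly \eqref{para-Kahler str}.

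There is essentially no obstacle here; the proposition is a corollary packaging together Theorem~\ref{phase space and Lei-Den}, Proposition~\ref{paracomplex pro}, and Theorem~\ref{para-Kahler if and only if}. The only point requiring a moment's care is making sure the $\pm1$-eigenspace decomposition of the stated $E$ really is $\huaE\oplus\huaE^*$ (so that Theorem~\ref{product to decomposition} applies and the dimension count for the paracomplex condition goes through), and that the sign in \eqref{para-Kahler str} comes out correctly from the symmetry of $\huaB$ in \eqref{phase space symplectic} — both of which are routine. I would therefore present the short direct computation above rather than chaining the cited theorems, since it is transparent and fits in a few lines.
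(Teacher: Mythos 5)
Your proposal is correct and follows essentially the same route as the paper, whose entire proof is to invoke Theorem \ref{para-Kahler if and only if} after noting that $\huaE$ and $\huaE^*$ are isotropic Leibniz subalgebras with respect to $\huaB$ from \eqref{phase space symplectic}. Your additional direct verification of $E^2=\Id$, the paracomplex condition, and \eqref{para-Kahler str} is a harmless (and correct) expansion of the same argument.
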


\begin{proof}
It is directly deduced by Theorem \ref{para-Kahler if and only if}.
\end{proof}

Let $(\huaE,\huaB,E)$ be a para-K\"{a}hler Leibniz algebra. Then it is obvious that $\huaE_-$ is isomorphic to $\huaE^*_+$ via the symplectic structure $\huaB$. Moreover it is straightforward to deduce that

\begin{pro}
Any para-K\"{a}hler Leibniz algebra $(\huaE,\huaB,E)$ is isomorphic to the para-K\"{a}hler Leibniz algebra associated to a phase space $(\huaE_+\oplus\huaE^*_+,\huaB)$ of the Leibniz algebra $\huaE_+$.
\end{pro}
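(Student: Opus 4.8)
The plan is to unwind the definitions so that the isomorphism becomes essentially tautological, using the machinery already assembled in this section. Let $(\huaE,\huaB,E)$ be a para-K\"{a}hler Leibniz algebra. By Theorem \ref{para-Kahler if and only if} (or simply since $E$ is a paracomplex structure), we have the decomposition $\huaE=\huaE_+\oplus\huaE_-$ into eigenspaces for $\pm1$, and both summands are isotropic Leibniz subalgebras of $\huaE$. The first step is to record that $\huaE_+$ is a Leibniz algebra in its own right, and to build its phase space. By Theorem \ref{phase space and Lei-Den} we do not yet know an abstract phase space exists for $\huaE_+$ without extra structure; however, we do not need that — instead I would directly exhibit $\huaE$ itself, together with $\huaB$ and the induced subalgebra structures, as a phase space of $\huaE_+$ after identifying $\huaE_-$ with $\huaE_+^*$.

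The key step is the identification $\huaE_-\cong\huaE_+^*$ via $\huaB$. Since $\huaB$ is nondegenerate and $\huaE_+$ is isotropic, the map $\huaE_-\to\huaE_+^*$, $\xi\mapsto\huaB(\xi,\cdot)\mid_{\huaE_+}$, is well-defined and injective; since $\dim\huaE_-=\dim\huaE_+$ (this is exactly the paracomplex hypothesis, or again follows from isotropy plus nondegeneracy), it is an isomorphism of vector spaces. Transport the Leibniz bracket of $\huaE_-$ to $\huaE_+^*$ through this isomorphism, and transport the ambient bracket $[\cdot,\cdot]_\huaE$ to a Leibniz bracket on $\huaE_+\oplus\huaE_+^*$. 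One checks that under this identification the bilinear form $\huaB$ becomes precisely the canonical pairing form \eqref{phase space symplectic} on $T^*\huaE_+=\huaE_+\oplus\huaE_+^*$: for $x,y\in\huaE_+$ and $\xi,\eta\in\huaE_-$ one computes $\huaB(x+\xi,y+\eta)=\huaB(x,\eta)+\huaB(\xi,y)=\langle\eta,x\rangle+\langle\xi,y\rangle$ using isotropy of both $\huaE_+$ and $\huaE_-$ and symmetry of $\huaB$. Since $(\huaE,[\cdot,\cdot]_\huaE,\huaB)$ is a symplectic Leibniz algebra in which $\huaE_+$ and $\huaE_-$ are subalgebras, the transported data $(\huaE_+\oplus\huaE_+^*,[\cdot,\cdot],\huaB)$ is by definition a phase space of $\huaE_+$.

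It then remains to verify that $E$ corresponds to the canonical paracomplex structure of that phase space, namely $x+\xi\mapsto x-\xi$ for $x\in\huaE_+$, $\xi\in\huaE_+^*$ (Proposition \ref{phase space to para-Kahler}). But this is immediate: $E$ acts as $+1$ on $\huaE_+$ and $-1$ on $\huaE_-$ by construction, and the identification $\huaE_-\cong\huaE_+^*$ intertwines $E\mid_{\huaE_-}=-\Id$ with $-\Id$ on $\huaE_+^*$. Hence the vector space isomorphism $\huaE\to\huaE_+\oplus\huaE_+^*$ carries $\huaB$ to $\huaB$, carries the Leibniz bracket to the Leibniz bracket (by transport of structure, so this is automatic), and carries $E$ to the canonical $E$; thus it is an isomorphism of para-K\"{a}hler Leibniz algebras onto the one associated to the phase space $(\huaE_+\oplus\huaE_+^*,\huaB)$ of $\huaE_+$.

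The only place requiring any care — the ``main obstacle,'' though a mild one — is checking that the two subalgebra structures genuinely assemble into a well-defined Leibniz bracket on $\huaE_+\oplus\huaE_+^*$ matching the definition of a phase space, i.e. that nothing beyond ``$\huaE_+,\huaE_-$ are subalgebras'' is needed: the cross terms $[\huaE_+,\huaE_-]_\huaE$ and $[\huaE_-,\huaE_+]_\huaE$ are simply whatever the ambient bracket gives, and the phase space definition only demands that the two pieces be subalgebras and that $\huaB$ of the canonical form \eqref{phase space symplectic} satisfy \eqref{symplectic}, which it does because $(\huaE,\huaB)$ was assumed symplectic. So in fact there is no real obstacle; the statement follows by assembling Theorem \ref{para-Kahler if and only if}, the nondegeneracy-plus-isotropy identification $\huaE_-\cong\huaE_+^*$, and Proposition \ref{phase space to para-Kahler}, and I would present it as a short paragraph rather than a computation.
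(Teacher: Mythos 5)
Your argument is correct and is essentially the paper's own: the paper proves this proposition exactly by observing that $\huaB$ identifies $\huaE_-$ with $\huaE_+^*$ (both summands being isotropic and $\huaB$ nondegenerate), transporting the bracket so that $\huaB$ becomes the canonical pairing \eqref{phase space symplectic} and $E$ becomes the paracomplex structure of Proposition \ref{phase space to para-Kahler}, which is what you spell out. The only cosmetic point is that injectivity of $\xi\mapsto\huaB(\xi,\cdot)\mid_{\huaE_+}$ is most directly seen from the isotropy of $\huaE_-$ (or from $\huaE_+^\perp=\huaE_+$ using the equal dimensions), rather than from the isotropy of $\huaE_+$ alone.
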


\begin{defi}
A {\bf pseudo-Riemannian Leibniz algebra} is a Leibniz algebra $(\huaE,[\cdot,\cdot]_\huaE)$ endowed with a nondegenerate skew-symmetric bilinear form $S$. The associated {\bf Levi-Civita products}   $\ast,\star:\huaE\otimes\huaE\rightarrow\huaE$ are defined by the following formulas:
\begin{eqnarray}
2S(x\ast y,z)&=&S([x,y]_\huaE,z)+S([y,z]_\huaE,x)+S([z,y]_\huaE,x)+S([x,z]_\huaE,y),\label{Levi-Civita1}\\
2S(x\star y,z)&=&S([x,y]_\huaE,z)-S([y,z]_\huaE,x)-S([z,y]_\huaE,x)-S([x,z]_\huaE,y).\label{Levi-Civita2}
\end{eqnarray}
\end{defi}

\begin{pro}
Let $(\huaE,S)$ be a pseudo-Riemannian Leibniz algebra. Then the Levi-Civita products $(\ast,\star)$ satisfy the following equalities:
\begin{eqnarray}
x\ast y+x\star y&=&[x,y]_\huaE,\label{Levi-Civita property1}\\
S(x\ast y,z)+S(y,x\ast z)&=&0,\label{Levi-Civita property2}\\
S(x\star y,z)-S(x,y\ast z+z\star y)&=&0.\label{Levi-Civita property3}
\end{eqnarray}
\end{pro}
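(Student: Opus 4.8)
The plan is to exploit the nondegeneracy of $S$ systematically: each of the three identities has the shape ``$S(u,z)=S(v,z)$ for all $z\in\huaE$'', so in every case it suffices to compute both sides after pairing with an arbitrary $z$ and compare the resulting expressions. No appeal to the Leibniz identity \eqref{Leibniz} is needed; only bilinearity, skew-symmetry and nondegeneracy of $S$ enter.

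For \eqref{Levi-Civita property1} I would simply add the defining formulas \eqref{Levi-Civita1} and \eqref{Levi-Civita2}. The terms $\pm S([y,z]_\huaE,x)$, $\pm S([z,y]_\huaE,x)$ and $\pm S([x,z]_\huaE,y)$ cancel in pairs, leaving $2S(x\ast y,z)+2S(x\star y,z)=2S([x,y]_\huaE,z)$ for all $z$; nondegeneracy then gives $x\ast y+x\star y=[x,y]_\huaE$.

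For \eqref{Levi-Civita property2} the key observation is that the right-hand side of \eqref{Levi-Civita1} is invariant under the interchange $y\leftrightarrow z$: the term $S([x,y]_\huaE,z)$ is swapped with $S([x,z]_\huaE,y)$, while $S([y,z]_\huaE,x)+S([z,y]_\huaE,x)$ is itself symmetric. Hence $S(x\ast y,z)=S(x\ast z,y)=-S(y,x\ast z)$, the last equality by skew-symmetry of $S$, which is exactly \eqref{Levi-Civita property2}.

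For \eqref{Levi-Civita property3} I would expand $2S(y\ast z,x)$ by applying \eqref{Levi-Civita1} after the substitution $(x,y,z)\mapsto(y,z,x)$, expand $2S(z\star y,x)$ by applying \eqref{Levi-Civita2} after the substitution $(x,y,z)\mapsto(z,y,x)$, and add. In the sum the two terms $\pm S([z,x]_\huaE,y)$ cancel and the two terms $\pm S([y,x]_\huaE,z)$ cancel, and what remains is $S([y,z]_\huaE,x)+S([z,y]_\huaE,x)+S([x,z]_\huaE,y)-S([x,y]_\huaE,z)$, which is precisely $-2S(x\star y,z)$ by \eqref{Levi-Civita2}. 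Thus $S(y\ast z+z\star y,x)=-S(x\star y,z)$, equivalently $S(x,y\ast z+z\star y)=S(x\star y,z)$ by skew-symmetry, which is \eqref{Levi-Civita property3}. The entire proof is bookkeeping of these expressions; the only point demanding care is keeping track of the arguments of $S$ when substituting into \eqref{Levi-Civita1} and \eqref{Levi-Civita2}, so the ``main obstacle'' is organizational rather than conceptual.
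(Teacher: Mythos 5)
Your proposal is correct and follows essentially the same route as the paper: add the two defining formulas and use nondegeneracy for the first identity, then expand the relevant terms from \eqref{Levi-Civita1}--\eqref{Levi-Civita2} and use skew-symmetry of $S$ for the other two. Your observation that the right-hand side of \eqref{Levi-Civita1} is symmetric in $y$ and $z$ is just a slightly tidier packaging of the same cancellation the paper carries out term by term.
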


\begin{proof}
By \eqref{Levi-Civita1} and \eqref{Levi-Civita2}, we have
\begin{eqnarray*}
2S(x\ast y+x\star y,z)=2S([x,y]_\huaE,z),
\end{eqnarray*}
which implies that $x\ast y+x\star y=[x,y]_\huaE$.
Since $S$ is skew-symmetric, we have
\begin{eqnarray*}
2S(y,x\ast z)&=&-2S(x\ast z,y)=-S([x,z]_\huaE,y)-S([z,y]_\huaE,x)-S([y,z]_\huaE,x)-S([x,y]_\huaE,z),\\
2S(x,y\ast z)&=&-2S(y\ast z,x)=-S([y,z]_\huaE,x)-S([z,x]_\huaE,y)-S([x,z]_\huaE,y)-S([y,x]_\huaE,z),\\
2S(x,z\star y)&=&-2S(z\star y,x)=-S([z,y]_\huaE,x)+S([y,x]_\huaE,z)+S([x,y]_\huaE,z)+S([z,x]_\huaE,y),
\end{eqnarray*}
which shows that $S(x\ast y,z)+S(y,x\ast z)=0$ and $S(x\star y,z)-S(x,y\ast z+z\star y)=0$.
\end{proof}

\begin{pro}
Let $(\huaE,\huaB,E)$ be a para-K\"{a}hler Leibniz algebra. Then $(\huaE,S)$ is a pseudo-Riemannian Leibniz algebra, where $S$ is defined by
\begin{eqnarray}
\label{para-Kahler to pseudo}S(x,y)=\huaB(x,Ey),\,\,\,\forall x,y\in\huaE.
\end{eqnarray}
\end{pro}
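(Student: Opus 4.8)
The plan is to verify directly that the bilinear form $S(x,y)=\huaB(x,Ey)$ satisfies the defining conditions of a pseudo-Riemannian Leibniz algebra, namely that $S$ is skew-symmetric and nondegenerate. Bilinearity of $S$ is automatic, since it is the bilinear form $\huaB$ precomposed with the linear endomorphism $E$ in the second argument.

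The central step is the skew-symmetry. First I would extract from the para-K\"{a}hler compatibility \eqref{para-Kahler str} a more convenient identity: replacing $y$ by $Ey$ in $\huaB(Ex,Ey)=-\huaB(x,y)$ and invoking $E^2=\Id$ yields
\begin{eqnarray*}
\huaB(Ex,y)=-\huaB(x,Ey),\qquad\forall x,y\in\huaE.
\end{eqnarray*}
Combining this with the symmetry of $\huaB$, I would then compute
\begin{eqnarray*}
S(y,x)=\huaB(y,Ex)=\huaB(Ex,y)=-\huaB(x,Ey)=-S(x,y),
\end{eqnarray*}
establishing $S(y,x)=-S(x,y)$. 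This is precisely where the symmetric-symplectic convention of the paper manifests itself: a \emph{symmetric} $\huaB$ together with the relation \eqref{para-Kahler str} produces a \emph{skew-symmetric} $S$, which is exactly the symmetry type demanded by the definition of a pseudo-Riemannian Leibniz algebra (and the one used in the subsequent Levi-Civita computations).

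For nondegeneracy I would use that $E$ is invertible; indeed $E^{-1}=E$, since $E^2=\Id$. If $S(x,y)=0$ for all $y\in\huaE$, then $\huaB(x,Ey)=0$ for all $y$; because $E$ is surjective, $Ey$ ranges over all of $\huaE$, so $\huaB(x,z)=0$ for every $z\in\huaE$, and the nondegeneracy of $\huaB$ forces $x=0$. Hence $S$ is nondegenerate, and $(\huaE,S)$ is a pseudo-Riemannian Leibniz algebra.

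There is no substantive obstacle here; the argument is a short computation. The only point requiring care is the sign bookkeeping in the skew-symmetry step, where both the symmetry of $\huaB$ and the rewritten para-K\"{a}hler relation $\huaB(Ex,y)=-\huaB(x,Ey)$ must be applied in the correct order to land on the minus sign.
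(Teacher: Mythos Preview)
Your proof is correct and follows essentially the same approach as the paper: both verify skew-symmetry by combining the symmetry of $\huaB$ with the para-K\"{a}hler relation $\huaB(Ex,Ey)=-\huaB(x,y)$ (the paper applies the relation first, you apply symmetry first, but the chain of equalities is equivalent), and both dispatch nondegeneracy via the invertibility of $E$.
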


\begin{proof}
Since $\huaB$ is symmetric and $\huaB(Ex,Ey)=-\huaB(x,y)$, we have
$$S(y,x)=\huaB(y,Ex)=-\huaB(Ey,x)=-\huaB(x,Ey)=-S(x,y),$$
which implies that $S$ is skew-symmetric. Moreover, since $\huaB$ is nondegenerate and $E^2=\Id$, it is obvious that $S$ is nondegenerate.
\end{proof}

The following proposition clarifies the relationship between the Levi-Civita products and the Leibniz-dendriform algebra structure on a para-K\"{a}hler Leibniz algebra.

\begin{pro}
Let $(\huaE,\huaB,E)$ be a para-K\"{a}hler Leibniz algebra and $(\ast,\star)$ the associated Levi-Civita products. Then for all $x_1,x_2\in\huaE_+$ and $\xi_1,\xi_2\in\huaE_-$, we have
\begin{eqnarray*}
&&x_1\ast x_2=x_1\lhd x_2,\,\,\,x_1\star x_2=x_1\rhd x_2,\\
&&\,\xi_1\ast \xi_2=\xi_1\lhd \xi_2,\,\,\,\,\xi_1\star \xi_2=\xi_1\rhd \xi_2,
\end{eqnarray*}
where $\lhd,\rhd$ is given by \eqref{sym-to-LeiDen1} and \eqref{sym-to-LeiDen2}.
\end{pro}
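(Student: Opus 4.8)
The plan is to prove both pairs of identities by pairing the two sides against all of $\huaE$ with the nondegenerate skew form $S$ of \eqref{para-Kahler to pseudo}, exploiting that $\huaE_+$ and $\huaE_-$ are Lagrangian on both sides of the comparison.

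\textbf{Preliminaries I would set up first.} By Theorem \ref{product to decomposition} the eigenspaces $\huaE_+,\huaE_-$ of the paracomplex structure $E$ (for the eigenvalues $\pm1$) are Leibniz subalgebras with $\huaE=\huaE_+\oplus\huaE_-$, and by Theorem \ref{para-Kahler if and only if} (equivalently, by \eqref{para-Kahler str} and $\dim\huaE_+=\dim\huaE_-$) each of $\huaE_+,\huaE_-$ is a Lagrangian subspace of $(\huaE,\huaB)$, i.e.\ isotropic of half dimension and hence equal to its own $\huaB$-orthogonal. Since $E=\pm\Id$ on $\huaE_\pm$, the form $S(x,y)=\huaB(x,Ey)$ satisfies $S(a,\xi)=-\huaB(a,\xi)$ and $S(\xi,a)=\huaB(\xi,a)$ for $a\in\huaE_+$, $\xi\in\huaE_-$, and vanishes on $\huaE_+\times\huaE_+$ and on $\huaE_-\times\huaE_-$; hence $\huaE_+,\huaE_-$ are Lagrangian for $S$ as well, and $S$ restricts to a perfect pairing between $\huaE_+$ and $\huaE_-$. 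I will also use that $x\lhd y+x\rhd y=[x,y]_\huaE$ (the Leibniz-dendriform structure of \eqref{sym-to-LeiDen1}--\eqref{sym-to-LeiDen2} is compatible, by Theorem \ref{thm:qua-leiden-symlei} and \eqref{LeiDen to Lei}) together with $x\ast y+x\star y=[x,y]_\huaE$ from \eqref{Levi-Civita property1}.

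\textbf{The case $x_1,x_2\in\huaE_+$.} Since $S$ is nondegenerate and $\huaE=\huaE_+\oplus\huaE_-$, it suffices to show $S(x_1\ast x_2-x_1\lhd x_2,w)=0$ for $w\in\huaE_+$ and for $w\in\huaE_-$. For $w=z\in\huaE_+$: every bracket occurring in \eqref{Levi-Civita1} (with $x=x_1$, $y=x_2$, and $z$) lies in the subalgebra $\huaE_+$, so all four summands vanish because $S|_{\huaE_+\times\huaE_+}=0$; and $S(x_1\lhd x_2,z)=\huaB(x_1\lhd x_2,z)=-\huaB(x_2,[x_1,z]_\huaE)=0$ by \eqref{sym-to-LeiDen1} and isotropy of $\huaE_+$. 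For $w=\xi\in\huaE_-$: I expand $2S(x_1\ast x_2,\xi)$ by \eqref{Levi-Civita1}, rewrite each $S$-term as a $\huaB$-term using $E\xi=-\xi$, $Ex_i=x_i$ and the symmetry of $\huaB$, and then apply \eqref{symplectic} to the triple $(x_1,\xi,x_2)$; this collapses the four terms to $2\huaB(x_2,[x_1,\xi]_\huaE)$, which by \eqref{sym-to-LeiDen1} equals $2S(x_1\lhd x_2,\xi)$. Hence $x_1\ast x_2=x_1\lhd x_2$, and then $x_1\star x_2=[x_1,x_2]_\huaE-x_1\ast x_2=[x_1,x_2]_\huaE-x_1\lhd x_2=x_1\rhd x_2$.

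\textbf{The case $\xi_1,\xi_2\in\huaE_-$.} This is entirely parallel: testing $S(\xi_1\ast\xi_2-\xi_1\lhd\xi_2,w)$ against $w\in\huaE_-$ both sides vanish (since $\huaE_-$ is a subalgebra, $S|_{\huaE_-\times\huaE_-}=0$, and $\huaE_-$ is $\huaB$-isotropic), and against $w=x\in\huaE_+$ the expansion of $2S(\xi_1\ast\xi_2,x)$ by \eqref{Levi-Civita1}, converted to $\huaB$ and simplified by \eqref{symplectic} applied to $(\xi_1,x,\xi_2)$, equals $-2\huaB([\xi_1,x]_\huaE,\xi_2)=2S(\xi_1\lhd\xi_2,x)$; so $\xi_1\ast\xi_2=\xi_1\lhd\xi_2$, and $\xi_1\star\xi_2=\xi_1\rhd\xi_2$ follows as before from $\ast+\star=[\cdot,\cdot]_\huaE=\lhd+\rhd$. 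Thus the only genuine computation is the collapse in the last two paragraphs: one must match the four-term combination defining $S(x_1\ast x_2,\xi)$ (resp.\ $S(\xi_1\ast\xi_2,x)$) term by term against the right-hand side of \eqref{symplectic}, with the correct substitution of arguments and signs, so that three of the terms assemble into a single copy of $\huaB(x_2,[x_1,\xi]_\huaE)$ and the fourth duplicates it; everything else is bookkeeping with the eigenspace decomposition, isotropy, and the identities $x\lhd y+x\rhd y=[x,y]_\huaE=x\ast y+x\star y$.
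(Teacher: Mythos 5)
Your proposal is correct and follows essentially the same route as the paper: pairing $x_1\ast x_2$ (resp.\ $\xi_1\ast\xi_2$) against both eigenspaces, using that $(\huaE_\pm)^\perp=\huaE_\pm$, converting $S$-terms to $\huaB$-terms via $E=\pm\Id$ on $\huaE_\pm$, and collapsing the four Levi-Civita terms with \eqref{symplectic} to recover \eqref{sym-to-LeiDen1}; the sign bookkeeping you describe does check out. The only (harmless) deviation is that you obtain the $\star$-identities from $x\ast y+x\star y=[x,y]_\huaE=x\lhd y+x\rhd y$ instead of repeating the computation with \eqref{Levi-Civita2} and \eqref{sym-to-LeiDen2}, which is a slight streamlining of the paper's ``similarly'' step.
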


\begin{proof}
Since $(\huaE,\huaB,E)$ is a para-K\"{a}hler Leibniz algebra, Leibniz subalgebras $\huaE_+$ and $\huaE_-$ are isotropic. For all $x_1,x_2,x_3\in\huaE_+$, we have
\begin{eqnarray*}
&&2\huaB(x_1\ast x_2,x_3)\\
&=&2\huaB(x_1\ast x_2,Ex_3)=2S(x_1\ast x_2,x_3)\\
&=&S([x_1,x_2]_\huaE,x_3)+S([x_2,x_3]_\huaE,x_1)+S([x_3,x_2]_\huaE,x_1)+S([x_1,x_3]_\huaE,x_2)\\
&=&\huaB([x_1,x_2]_\huaE,x_3)+\huaB([x_2,x_3]_\huaE,x_1)+\huaB([x_3,x_2]_\huaE,x_1)+\huaB([x_1,x_3]_\huaE,x_2)\\
&=&0.
\end{eqnarray*}
By $(\huaE_+)^\perp=\huaE_+$, we obtain $x_1\ast x_2\in\huaE_+$. Similarly, $x_1\star x_2\in\huaE_+$ and
$\xi_1\ast \xi_2,\xi_1\star \xi_2\in\huaE_-$ for all $\xi_1,\xi_2\in\huaE_-$. Furthermore, for all $x_1,x_2\in\huaE_+$ and $\xi\in\huaE_-$, we have
\begin{eqnarray*}
&&2\huaB(x_1\ast x_2,\xi)\\
&=&-2\huaB(x_1\ast x_2,E\xi)=-2S(x_1\ast x_2,\xi)\\
&=&-S([x_1,x_2]_\huaE,\xi)-S([x_2,\xi]_\huaE,x_1)-S([\xi,x_2]_\huaE,x_1)-S([x_1,\xi]_\huaE,x_2)\\
&=&\huaB([x_1,x_2]_\huaE,\xi)-\huaB([x_2,\xi]_\huaE,x_1)-\huaB([\xi,x_2]_\huaE,x_1)-\huaB([x_1,\xi]_\huaE,x_2)\\
&=&-\huaB([x_1,\xi]_\huaE,x_2)-\huaB([x_1,\xi]_\huaE,x_2)\\
&=&-2\huaB(x_2,[x_1,\xi]_\huaE)\\
&=&2\huaB(x_1\lhd x_2,\xi),
\end{eqnarray*}
which implies that $x_1\ast x_2=x_1\lhd x_2$. Similarly, we can prove the rest in a similar way.
\end{proof}

\begin{pro}\label{lei-den to para-Kahler}
Let $(\huaL,\lhd,\rhd)$ be a Leibniz-dendriform algebra. Then $(\huaL\ltimes_{L^*_\lhd,-L^*_\lhd-R^*_\rhd}\huaL^*,\huaB,E)$ is a para-K\"{a}hler Leibniz algebra, where $\huaB$ is given by \eqref{phase space symplectic} and $E$ is given by \eqref{paracomplex str}.
Furthermore, for all $x\in\huaL$ and $\xi\in\huaL^*$, we have
$$x\ast \xi=L^*_{\lhd x} \xi,\,\,\,\xi\ast x=0,\,\,\,
x\star \xi=0,\,\,\,\xi\star x=(-L^*_{\lhd x}-R^*_{\rhd x})\xi.$$
\end{pro}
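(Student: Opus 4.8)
The plan is to obtain the para-K\"{a}hler statement almost for free from what is already proved, and then to determine the Levi-Civita products by testing them against the spanning set $\huaL\cup\huaL^*$. By (the proof of) Theorem~\ref{phase space and Lei-Den}, the semidirect product $\huaL\ltimes_{L^*_\lhd,-L^*_\lhd-R^*_\rhd}\huaL^*$ equipped with the canonical symmetric form $\huaB$ of \eqref{phase space symplectic} is a phase space of the sub-adjacent Leibniz algebra $(\huaL,[\cdot,\cdot]_{\lhd,\rhd})$. Hence Proposition~\ref{phase space to para-Kahler} applies directly: $E(x+\xi)=x-\xi$ is a paracomplex structure and the compatibility \eqref{para-Kahler str} holds, so $(\huaL\ltimes_{L^*_\lhd,-L^*_\lhd-R^*_\rhd}\huaL^*,\huaB,E)$ is a para-K\"{a}hler Leibniz algebra. (Alternatively one invokes Proposition~\ref{paracomplex pro} for $E$ and checks $\huaB(Ex,Ey)=-\huaB(x,y)$ by hand, which is immediate since $\huaL$ and $\huaL^*$ are isotropic for $\huaB$.)

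For the explicit formulas I would first record the ingredients of \eqref{Levi-Civita1}--\eqref{Levi-Civita2}. The associated pseudo-Riemannian form is $S(u,v)=\huaB(u,Ev)$ by \eqref{para-Kahler to pseudo}; since $\huaL$ and $\huaL^*$ are isotropic, $S$ vanishes on $\huaL\times\huaL$ and on $\huaL^*\times\huaL^*$, while $S(x,\xi)=-\langle x,\xi\rangle=-S(\xi,x)$ for $x\in\huaL$, $\xi\in\huaL^*$. The components of the bracket \eqref{semidirect product} are $[x,y]_\ltimes=[x,y]_{\lhd,\rhd}\in\huaL$, $[x,\eta]_\ltimes=L^*_{\lhd x}\eta\in\huaL^*$, $[\xi,y]_\ltimes=(-L^*_{\lhd y}-R^*_{\rhd y})\xi\in\huaL^*$ and $[\xi,\eta]_\ltimes=0$; moreover $\langle L^*_{\lhd x}\xi,y\rangle=-\langle\xi,x\lhd y\rangle$ and $\langle R^*_{\rhd x}\xi,y\rangle=-\langle\xi,y\rhd x\rangle$ by Proposition~\ref{LeiDen rep}. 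Since $\huaB$ is nondegenerate and $E^2=\Id$, $S$ is nondegenerate, so each of $x\ast y$ and $x\star y$ is uniquely characterized by its defining relation.

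Then I would compute $2S(x\ast\xi,z)$ from \eqref{Levi-Civita1} for $x\in\huaL$, $\xi\in\huaL^*$, splitting according to whether $z\in\huaL^*$ or $z\in\huaL$. For $z\in\huaL^*$ all four terms vanish since every argument of $S$ then lies in $\huaL^*$, matching $S(L^*_{\lhd x}\xi,z)=0$. For $z=w\in\huaL$, substituting the bracket components and the pairing identities above makes the four terms telescope to $-2\langle\xi,x\lhd w\rangle=2S(L^*_{\lhd x}\xi,w)$; nondegeneracy of $S$ then yields $x\ast\xi=L^*_{\lhd x}\xi$. The same scheme applied to $2S(\xi\ast x,z)$ shows all contributions cancel for every $z$, so $\xi\ast x=0$. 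Finally, from $x\ast y+x\star y=[x,y]_\ltimes$ (that is, \eqref{Levi-Civita property1} for this Leibniz algebra) one reads off $x\star\xi=[x,\xi]_\ltimes-x\ast\xi=0$ and $\xi\star x=[\xi,x]_\ltimes-\xi\ast x=(-L^*_{\lhd x}-R^*_{\rhd x})\xi$.

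The computation is routine once the conventions are fixed; the only delicate point is the cancellation in the $\xi\ast x=0$ step, where each of the four summands of \eqref{Levi-Civita1} splits into two pairings $\langle\xi,-\rangle$ with $-$ among $x\lhd w$, $w\lhd x$, $x\rhd w$, $w\rhd x$, and one must check that the eight resulting terms cancel in pairs. Keeping straight the signs coming from $\huaB$, from $S=\huaB(\cdot,E\cdot)$, and from the coadjoint-type maps $L^*_\lhd$ and $R^*_\rhd$ is the main place an error could slip in; no Leibniz-dendriform identity beyond the defining axioms \eqref{p1}--\eqref{p3} is needed.
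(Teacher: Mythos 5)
Your proposal is correct and takes essentially the same route as the paper: the para-K\"ahler part follows, exactly as in the paper, from Theorem \ref{phase space and Lei-Den} together with Proposition \ref{phase space to para-Kahler}, and the formula $x\ast\xi=L^*_{\lhd x}\xi$ and the vanishing $\xi\ast x=0$ are established by the same pairing computations of $S$ against $\huaL$ and $\huaL^*$ with the eightfold cancellation you describe. The only (harmless) deviation is that you read off $x\star\xi=0$ and $\xi\star x=(-L^*_{\lhd x}-R^*_{\rhd x})\xi$ from the identity \eqref{Levi-Civita property1}, $x\ast y+x\star y=[x,y]_\ltimes$, whereas the paper verifies them directly via \eqref{Levi-Civita2}; both arguments are valid.
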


\begin{proof}
By Theorem \ref{phase space and Lei-Den}, $(\huaL\ltimes_{L^*_\lhd,-L^*_\lhd-R^*_\rhd}\huaL^*,\huaB)$ is a phase space of the Leibniz algebra $(\huaL,[\cdot,\cdot]_{\lhd,\rhd})$.
Then $(\huaL\ltimes_{L^*_\lhd,-L^*_\lhd-R^*_\rhd}\huaL^*,\huaB,E)$ is a para-K\"{a}hler Leibniz algebra by Proposition \ref{phase space to para-Kahler} and $S(\huaL,\huaL)=S(\huaL^*,\huaL^*)=0$ by \eqref{para-Kahler to pseudo}. For all $x,y\in\huaL$ and $\xi,\eta\in\huaL^*$, we have
\begin{eqnarray*}
2\huaB(x\ast \xi,\eta)&=&-2\huaB(x\ast \xi,E\eta)=-2S(x\ast \xi,\eta)\\
&=&-S([x,\xi]_\ltimes,\eta)-S([\xi,\eta]_\ltimes,x)-S([\eta,\xi]_\ltimes,x)-S([x,\eta]_\ltimes,\xi)\\
&=&0,
\end{eqnarray*}
which implies that $x\ast \xi\in\huaL^*$. Then we have
\begin{eqnarray*}
2\huaB(x\ast \xi,y)&=&2\huaB(x\ast \xi,Ey)=2S(x\ast \xi,y)\\
&=&S([x,\xi]_\ltimes,y)+S([\xi,y]_\ltimes,x)+S([y,\xi]_\ltimes,x)+S([x,y]_\ltimes,\xi)\\
&=&\huaB([x,\xi]_\ltimes,y)+\huaB([\xi,y]_\ltimes,x)+\huaB([y,\xi]_\ltimes,x)-\huaB([x,y]_\ltimes,\xi)\\
&=&\huaB(L^*_{\lhd x}\xi,y)+\huaB((-L^*_{\lhd y}-R^*_{\rhd y})\xi,x)
+\huaB(L^*_{\lhd y}\xi,x)-\huaB([x,y]_{\lhd,\rhd},\xi)\\
&=&-\langle\xi,x\lhd y\rangle+\langle\xi,y\lhd x+x\rhd y\rangle-\langle\xi, y\lhd x\rangle-\langle x\lhd y+x\rhd y,\xi\rangle\\
&=&2\langle L^*_{\lhd x}\xi,y\rangle\\
&=&2\huaB(L^*_{\lhd x}\xi,y),
\end{eqnarray*}
which implies $x\ast \xi=L^*_{\lhd x} \xi$. Similarly, we obtain $\xi\star x=(-L^*_{\lhd x}-R^*_{\rhd x})\xi$. Since
\begin{eqnarray*}
2\huaB(\xi\ast x,y)&=&2\huaB(\xi\ast x,Ey)=2S(\xi\ast x,y)\\
&=&S([\xi,x]_\ltimes,y)+S([x,y]_\ltimes,\xi)+S([y,x]_\ltimes,\xi)+S([\xi,y]_\ltimes,x)\\
&=&\huaB([\xi,x]_\ltimes,y)-\huaB([x,y]_\ltimes,\xi)-\huaB([y,x]_\ltimes,\xi)+\huaB([\xi,y]_\ltimes,x)\\
&=&\huaB((-L^*_{\lhd x}-R^*_{\rhd x})\xi,y)-\huaB([x,y]_{\lhd,\rhd},\xi)
-\huaB([y,x]_{\lhd,\rhd},\xi)+\huaB((-L^*_{\lhd y}-R^*_{\rhd y})\xi,x)\\
&=&\langle \xi,x\lhd y+y\rhd x\rangle-\langle x\lhd y+x\rhd y,\xi\rangle-\langle y\lhd x+y\rhd x,\xi\rangle+\langle \xi,y\lhd x+x\rhd y\rangle\\
&=&0,\\
2\huaB(\xi\ast x,\eta)&=&-2\huaB(\xi\ast x,E\eta)=-2S(\xi\ast x,\eta)\\
&=&-S([\xi,x]_\ltimes,\eta)-S([x,\eta]_\ltimes,\xi)-S([\eta,x]_\ltimes,\xi)-S([\xi,\eta]_\ltimes,x)=0,
\end{eqnarray*}
we get $\xi\ast x\in\huaL$ and $\xi\ast x\in\huaL^*$, which implies that $\xi\ast x=0$. We obtain $x\star\xi=0$ in the same way.
\end{proof}

\section{Complex product structures on Leibniz algebras}\label{sec:compro}

In this section, we introduce the notion of a complex product
structure on a Leibniz algebra. We   construct complex product structures using Leibniz-dendriform algebras and conversely, a complex product
structure also induces certain Leibniz-dendriform algebra structures.
\begin{pro}\label{relation between product and complex str}
Let $\huaE$ be a complex Leibniz algebra. Then $E$ is a product structure on $\huaE$ if and only if $J=iE$ is a complex structure on $\huaE$.
\end{pro}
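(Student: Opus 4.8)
The plan is to verify directly that the two sets of axioms transform into one another under the substitution $J=iE$, using only that $\huaE$ is a complex Leibniz algebra, so that the bracket $[\cdot,\cdot]_\huaE$ is $\mathds{C}$-bilinear and $E$, hence $J$, is $\mathds{C}$-linear.

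First I would dispose of the quadratic condition. Since $J=iE$ we have $J^2=i^2E^2=-E^2$, so $J^2=-\Id$ holds if and only if $E^2=\Id$. Thus $E$ is an almost product structure on $\huaE$ precisely when $J=iE$ is an almost complex structure on $\huaE$, and one may then assume this throughout.

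Next I would substitute $J=iE$ into the integrability condition \eqref{complex str} for complex structures. Using $\mathds{C}$-bilinearity of $[\cdot,\cdot]_\huaE$ and $\mathds{C}$-linearity of $E$, one has $[Jx,y]_\huaE=i[Ex,y]_\huaE$, $[x,Jy]_\huaE=i[x,Ey]_\huaE$, and $J[Jx,Jy]_\huaE=iE[iEx,iEy]_\huaE=i^3E[Ex,Ey]_\huaE=-iE[Ex,Ey]_\huaE$, while the left-hand side is $J[x,y]_\huaE=iE[x,y]_\huaE$. Hence \eqref{complex str} for $J$ becomes
\[
iE[x,y]_\huaE=i[Ex,y]_\huaE+i[x,Ey]_\huaE-iE[Ex,Ey]_\huaE,
\]
and cancelling the common factor $i$ yields exactly the integrability condition \eqref{product str} for $E$. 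Reading the same chain of equalities in the reverse direction gives the converse implication, so $E$ is a product structure on $\huaE$ if and only if $J=iE$ is a complex structure on $\huaE$.

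Since every step is an equivalence obtained by a one-line manipulation, there is no genuine obstacle here; the only point requiring care is that all the maps and the bracket must be treated $\mathds{C}$-linearly, which is precisely the standing hypothesis that $\huaE$ is a complex Leibniz algebra (so that $J=iE$ makes sense and behaves as expected).
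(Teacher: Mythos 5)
Your proof is correct and follows essentially the same route as the paper: substitute $J=iE$ into the integrability condition, use $\mathds{C}$-linearity to pull out the factors of $i$, and note $J^2=-E^2$ handles the quadratic condition; the paper does the same computation (starting from the complex-structure side and leaving the converse to the reader), while you phrase it as a chain of equivalences, which covers both directions at once.
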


\begin{proof}
Let $J=iE$ be a complex structure on $\huaE$. We have $E^2=-J^2=\Id$, which means that $E$ is an almost product structure on $\huaE$. By   \eqref{complex str}, we have
\begin{eqnarray*}
E[x,y]_\huaE&=&-iJ[x,y]_\huaE\\
&=&-i[Jx,y]_{\huaE}-i[x,Jy]_{\huaE}-iJ[Jx,Jy]_{\huaE}\\
&=&[-iJx,y]_{\huaE}+[x,-iJy]_{\huaE}-(-iJ)[-iJx,-iJy]_{\huaE}\\
&=&[Ex,y]_{\huaE}+[x,Ey]_{\huaE}-E[Ex,Ey]_{\huaE}.
\end{eqnarray*}
Thus, $E$ is a product structure on the complex Leibniz algebra $\huaE$.

The other part can be proved in a similar way and we leave it to readers.
\end{proof}

\begin{cor}\label{complex to paracomplex}
Let $J$ be a complex structure on the real Leibniz algebra $(\huaE,[\cdot,\cdot]_\huaE)$. Then $-iJ_{\mathds{C}}$ is a paracomplex structure on the complex Leibniz algebra $(\huaE_{\mathds{C}},[\cdot,\cdot]_{\huaE_{\mathds{C}}})$, where $J_{\mathds{C}}$ is defined by \eqref{JC_1}.
\end{cor}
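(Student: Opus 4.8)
The plan is to deduce this directly from Proposition \ref{relation between product and complex str} together with the eigenspace computation already carried out in the proof of Theorem \ref{complex to decomposition}. First I would recall that extending $J$ complex-linearly produces $J_{\mathds{C}}\colon\huaE_{\mathds{C}}\to\huaE_{\mathds{C}}$ with $J_{\mathds{C}}^2=-\Id$, and that $J_{\mathds{C}}$ satisfies the integrability condition \eqref{complex str} on the complex Leibniz algebra $(\huaE_{\mathds{C}},[\cdot,\cdot]_{\huaE_{\mathds{C}}})$; this was verified in the first half of the proof of Theorem \ref{complex to decomposition}. Hence $J_{\mathds{C}}$ is a $\mathds{C}$-linear complex structure on $\huaE_{\mathds{C}}$. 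Setting $E=-iJ_{\mathds{C}}$, we have $J_{\mathds{C}}=iE$, so Proposition \ref{relation between product and complex str}, applied to the complex Leibniz algebra $\huaE_{\mathds{C}}$, says exactly that $E$ is a product structure on $\huaE_{\mathds{C}}$: indeed $E^2=-J_{\mathds{C}}^2=\Id$, and the integrability condition \eqref{product str} for $E$ is equivalent to \eqref{complex str} for $J_{\mathds{C}}$.

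It remains to verify the dimension condition in Definition \ref{def of paracomplex str}. A vector $v\in\huaE_{\mathds{C}}$ satisfies $Ev=v$ if and only if $J_{\mathds{C}}v=iv$, and $Ev=-v$ if and only if $J_{\mathds{C}}v=-iv$; thus the $\pm1$-eigenspaces of $E$ are precisely the $\pm i$-eigenspaces $\huaE_{\pm i}$ of $J_{\mathds{C}}$, and $\huaE_{\mathds{C}}=\huaE_i\oplus\huaE_{-i}$. In the proof of Theorem \ref{complex to decomposition} it was shown that $\huaE_i=\{x-iJx\mid x\in\huaE\}$ and $\huaE_{-i}=\{x+iJx\mid x\in\huaE\}=\sigma(\huaE_i)$. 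Using the complex linear isomorphism $\varphi\colon\huaE\to\huaE_i$ and the complex antilinear isomorphism $\psi=\sigma\circ\varphi\colon\huaE\to\huaE_{-i}$ from the discussion preceding Proposition \ref{complex to strict complex} (where $\huaE$ carries the complex structure \eqref{real to complex} defined by $J$), one gets $\dim_{\mathds{C}}\huaE_i=\dim_{\mathds{C}}\huaE_{-i}=\frac{1}{2}\dim_{\mathds{C}}\huaE_{\mathds{C}}$. Therefore the $+1$- and $-1$-eigenspaces of $E$ have the same dimension, so $E=-iJ_{\mathds{C}}$ is a paracomplex structure on $(\huaE_{\mathds{C}},[\cdot,\cdot]_{\huaE_{\mathds{C}}})$.

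There is no genuine obstacle here: the statement is a bookkeeping corollary combining Proposition \ref{relation between product and complex str} with the splitting $\huaE_{\mathds{C}}=\huaE_i\oplus\huaE_{-i}$ of Theorem \ref{complex to decomposition}. The only point deserving a moment's care is the dimension count, where one must either invoke the (anti)linear isomorphisms $\varphi,\psi$ above, or note that although the conjugation $\sigma$ is merely $\mathds{R}$-linear it still restricts to a bijection $\huaE_i\to\huaE_{-i}$ and preserves real dimensions, which via the splitting forces the equality of complex dimensions.
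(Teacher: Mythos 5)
Your proposal is correct and follows essentially the same route as the paper: invoke the splitting $\huaE_{\mathds{C}}=\huaE_i\oplus\huaE_{-i}$ and the fact that $J_{\mathds{C}}$ is a complex structure on $\huaE_{\mathds{C}}$ from (the proof of) Theorem \ref{complex to decomposition}, apply Proposition \ref{relation between product and complex str} to get that $-iJ_{\mathds{C}}$ is a product structure, and identify its $\pm1$-eigenspaces with $\huaE_{\pm i}$. The only difference is that you spell out the equal-dimension count via $\varphi$, $\psi$ (or $\sigma$), which the paper simply declares obvious.
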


\begin{proof}
By Theorem \ref{complex to decomposition}, we have $\huaE_{\mathds{C}}=\huaE_{i}\oplus\huaE_{-i}$ and $\huaE_{i}=\sigma(\huaE_{-i})$, where $\huaE_{i}$ and $\huaE_{-i}$ are subalgebras of $\huaE_{\mathds{C}}$. It is obvious that dim($\huaE_{i}$)=dim($\huaE_{-i}$).
By the proof of Theorem \ref{complex to decomposition}, $J_{\mathds{C}}$ is a complex structure on $\huaE_{\mathds{C}}$. Then $-iJ_{\mathds{C}}$ is a product structure on the complex Leibniz algebra $\huaE_{\mathds{C}}$ by Proposition \ref{relation between product and complex str}. Furthermore, $\huaE_{i}$ and $\huaE_{-i}$ are eigenspaces of $-iJ_{\mathds{C}}$ corresponding to $+1$ and $-1$, which implies that $-iJ_{\mathds{C}}$ is a paracomplex structure.
\end{proof}

\begin{defi}
Let $(\huaE,[\cdot,\cdot]_\huaE)$ be a real Leibniz algebra. A {\bf complex product structure} on the Leibniz algebra $\huaE$ is a pair $\{J,E\}$ of a complex structure $J$ and a product structure $E$ satisfying
\begin{eqnarray}\label{complex product str}
J\circ E=-E\circ J.
\end{eqnarray}
\end{defi}

\begin{rmk}\label{complex product str to paracomplex str}
Let $\{J,E\}$ be a complex product structure on the real Leibniz algebra $(\huaE,[\cdot,\cdot]_\huaE)$. For all $x\in\huaE_+$, we have $E(J(x))=-J(E(x))=-J(x)$ by \eqref{complex product str}, which implies that $J(\huaE_+)\subset\huaE_-$. Similarly, we have $J(\huaE_-)\subset\huaE_+$. Thus, we obtain $J(\huaE_+)=\huaE_-$ and $J(\huaE_-)=\huaE_+$. Therefore, {\rm dim(}$\huaE_{+}${\rm )}={\rm dim(}$\huaE_{-}${\rm )} and then $E$ is a paracomplex structure on $\huaE$.
\end{rmk}

\begin{thm}
Let $(\huaE,[\cdot,\cdot]_\huaE)$ be a real Leibniz algebra. Then $\huaE$ has a complex product structure if and only if $\huaE$ has a complex structure $J$ and has a vector space decomposition $\huaE=\huaE_{+}\oplus\huaE_{-}$, where $\huaE_+,\,\huaE_-$ are Leibniz subalgebras of $\huaE$ and $J(\huaE_+)=\huaE_-$.
\end{thm}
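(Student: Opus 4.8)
The plan is to deduce both implications from Theorem~\ref{product to decomposition}, which already pairs product structures on a Leibniz algebra with decompositions into subalgebras, and to handle the compatibility identity \eqref{complex product str} by a short eigenspace check.

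For the ``only if'' direction I would start from a complex product structure $\{J,E\}$. The complex structure $J$ is part of the data by hypothesis, and applying Theorem~\ref{product to decomposition} to the product structure $E$ yields the decomposition $\huaE=\huaE_+\oplus\huaE_-$ into the $\pm1$-eigenspaces of $E$, both of which are subalgebras. The remaining condition $J(\huaE_+)=\huaE_-$ is exactly what is recorded in Remark~\ref{complex product str to paracomplex str}, obtained there from $E^2=\Id$, $J^2=-\Id$ and $J\circ E=-E\circ J$; so this direction requires nothing new.

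For the ``if'' direction I would take the given complex structure $J$ and the given vector space decomposition $\huaE=\huaE_+\oplus\huaE_-$ into Leibniz subalgebras with $J(\huaE_+)=\huaE_-$. First I would note that $J(\huaE_-)=J^2(\huaE_+)=\huaE_+$, so $J$ interchanges the two summands. Then I would define $E\colon\huaE\to\huaE$ by $E(x+\xi)=x-\xi$ for $x\in\huaE_+$, $\xi\in\huaE_-$, as in \eqref{decomposition to product}; Theorem~\ref{product to decomposition} guarantees that $E$ is a product structure on $\huaE$. It then remains to verify $J\circ E=-E\circ J$, which I would check on each summand: on $\huaE_+$, $E$ acts as $\Id$ while $J$ maps into $\huaE_-$, where $E$ acts as $-\Id$; on $\huaE_-$, $E$ acts as $-\Id$ while $J$ maps into $\huaE_+$, where $E$ acts as $\Id$. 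In both cases the two compositions differ only by a sign, so by linearity $J\circ E=-E\circ J$ on all of $\huaE$ and $\{J,E\}$ is a complex product structure.

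I do not expect a genuine obstacle here: the argument is bookkeeping on top of Theorem~\ref{product to decomposition}. The only point that needs a moment's attention is confirming that $J$ carries $\huaE_-$ back onto $\huaE_+$, which is forced by $J^2=-\Id$; this is also precisely what makes the resulting $E$ automatically paracomplex, in agreement with Remark~\ref{complex product str to paracomplex str}.
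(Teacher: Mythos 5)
Your proposal is correct and follows essentially the same route as the paper: both directions rest on Theorem \ref{product to decomposition} together with Remark \ref{complex product str to paracomplex str}, and your summand-by-summand check of $J\circ E=-E\circ J$ is exactly the paper's computation $E(J(x+\xi))=-J(x)+J(\xi)=-J(E(x+\xi))$.
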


\begin{proof}
Let $\{J,E\}$ be a complex product structure on $\huaE$ and let $\huaE_{\pm}$ denote the eigenspaces corresponding to the eigenvalues $\pm 1$ of $E$. Then $\huaE_+$ and $\huaE_-$ are subalgebras of $\huaE$ by Theorem \ref{product to decomposition}. By Remark \ref{complex product str to paracomplex str}, we have $J(\huaE_+)=\huaE_-$.

Conversely, by Theorem \ref{product to decomposition}, we obtain a product structure $E$ on $\huaE$ defined as \eqref{decomposition to product}. By $J(\huaE_+)=\huaE_-$ and $J^2=-\Id$, we have
$$E(J(x+\xi))=E(J(x)+J(\xi))=-J(x)+J(\xi)=-J(x-\xi)=-J(E(x+\xi)),\,\,\,\forall x\in\huaE_+,\,\xi\in\huaE_-.$$
Thus, $\{J,E\}$ is a complex product structure on $\huaE$.
\end{proof}

\begin{pro}\label{complex product pro}
Let $E$ be a paracomplex structure on the real Leibniz algebra $(\huaE,[\cdot,\cdot]_\huaE)$. Then there is a complex product structure $\{J,E\}$ on $\huaE$ if and only if there exists a linear isomorphism $\phi:\huaE_+\rightarrow\huaE_-$ satisfying the following equality:
\begin{eqnarray}\label{complex product str and phi}
\phi[x_1,x_2]_\huaE=[\phi (x_1),x_2]_\huaE+[x_1,\phi (x_2)]_\huaE-\phi^{-1}[\phi (x_1),\phi (x_2)]_\huaE,\,\,\,\forall x_1,x_2\in\huaE_+.
\end{eqnarray}
\end{pro}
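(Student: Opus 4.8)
The plan is to identify $\phi$ with the restriction of $J$ to the $(+1)$-eigenspace $\huaE_+$ of $E$, and conversely to reconstruct $J$ from $\phi$ together with the splitting supplied by $E$.

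For the ``only if'' direction, assume $\{J,E\}$ is a complex product structure. By Theorem \ref{product to decomposition}, $\huaE_+$ and $\huaE_-$ are subalgebras and $\huaE=\huaE_+\oplus\huaE_-$, and by Remark \ref{complex product str to paracomplex str} we have $J(\huaE_+)=\huaE_-$ and $J(\huaE_-)=\huaE_+$. Since $J^2=-\Id$, the map $J$ is invertible, so $\phi:=J|_{\huaE_+}\colon\huaE_+\to\huaE_-$ is a linear isomorphism; and since $\phi(J\xi)=J^2\xi=-\xi$ for $\xi\in\huaE_-$, we get $J|_{\huaE_-}=-\phi^{-1}$. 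I would then feed $x_1,x_2\in\huaE_+$ into the integrability identity \eqref{complex str} for $J$: the subalgebra property gives $[x_1,x_2]_\huaE\in\huaE_+$ and $[Jx_1,Jx_2]_\huaE=[\phi x_1,\phi x_2]_\huaE\in\huaE_-$, so applying $J|_{\huaE_+}=\phi$ and $J|_{\huaE_-}=-\phi^{-1}$ rewrites \eqref{complex str} exactly as \eqref{complex product str and phi} (the same computation also shows \eqref{complex product str and phi} is well-posed, since $[\phi x_1,\phi x_2]_\huaE$ lies in the domain $\huaE_-$ of $\phi^{-1}$).

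For the converse, given an isomorphism $\phi\colon\huaE_+\to\huaE_-$ satisfying \eqref{complex product str and phi}, I would define $J\colon\huaE\to\huaE$ by $J(x+\xi)=\phi(x)-\phi^{-1}(\xi)$ for $x\in\huaE_+$ and $\xi\in\huaE_-$, using the decomposition $\huaE=\huaE_+\oplus\huaE_-$ furnished by the paracomplex structure. Direct substitution gives $J^2=-\Id$, and because $E$ acts as $\pm\Id$ on $\huaE_\pm$ while $J$ interchanges $\huaE_+$ and $\huaE_-$, one obtains $J\circ E=-E\circ J$, i.e.\ \eqref{complex product str}. The remaining point is the integrability identity \eqref{complex str} for $J$. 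Here I would introduce the tensor $N_J(u,v)=J[u,v]_\huaE-[Ju,v]_\huaE-[u,Jv]_\huaE-J[Ju,Jv]_\huaE$ and record the purely algebraic identities $N_J(Ju,v)=N_J(u,Jv)=-J\,N_J(u,v)$, which hold for any linear $J$ with $J^2=-\Id$ (only bilinearity of the bracket and linearity of $J$ are used). Since $\huaE=\huaE_+\oplus J(\huaE_+)$, these identities reduce the vanishing of $N_J$ on all of $\huaE\times\huaE$ to its vanishing on $\huaE_+\times\huaE_+$; and on $\huaE_+\times\huaE_+$, using that $\huaE_\pm$ are subalgebras and unwinding $J$ via $\phi$ and $-\phi^{-1}$ as above, $N_J(x_1,x_2)=0$ becomes exactly \eqref{complex product str and phi}, which holds by hypothesis. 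Hence $J$ is a complex structure and $\{J,E\}$ a complex product structure.

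I expect the only real subtlety to lie in the converse: recognising that the mixed- and $\huaE_-\times\huaE_-$-integrability conditions are not independent assumptions but follow formally from the $\huaE_+\times\huaE_+$ case through the identity for $N_J$, and being careful about precisely where the subalgebra property of $\huaE_\pm$ (from Theorem \ref{product to decomposition}) is invoked so that $J$ acts by the correct one of $\phi$, $-\phi^{-1}$ on each bracket. Everything else reduces to routine computation.
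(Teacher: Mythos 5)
Your proposal is correct, and its skeleton coincides with the paper's: in the forward direction you set $\phi=J\mid_{\huaE_+}$ (so $J\mid_{\huaE_-}=-\phi^{-1}$) and read off \eqref{complex product str and phi} from \eqref{complex str} using that $\huaE_\pm$ are subalgebras; in the converse you rebuild $J$ by $J(x+\xi)=\phi(x)-\phi^{-1}(\xi)$, exactly the map \eqref{phi to complex str} of the paper, and check $J^2=-\Id$ and $J\circ E=-E\circ J$ directly. The one genuine difference is how you finish the integrability check in the converse. The paper verifies \eqref{complex str} explicitly only on $\huaE_-\times\huaE_-$ (where both brackets land in single eigenspaces, so $J$ can be unwound via $\phi$ and $-\phi^{-1}$) and dismisses "all the other cases" as similar—though the mixed cases are not literally similar, since $[x,\xi]_\huaE$ for $x\in\huaE_+$, $\xi\in\huaE_-$ need not lie in either eigenspace. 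You instead introduce $N_J(u,v)=J[u,v]_\huaE-[Ju,v]_\huaE-[u,Jv]_\huaE-J[Ju,Jv]_\huaE$ and use the purely formal identities $N_J(Ju,v)=N_J(u,Jv)=-J\,N_J(u,v)$ (valid for any linear $J$ with $J^2=-\Id$, by bilinearity alone—I checked, they do hold), which together with $\huaE=\huaE_+\oplus J(\huaE_+)$ and bilinearity of $N_J$ reduce everything to the $\huaE_+\times\huaE_+$ case, where vanishing of $N_J$ is precisely \eqref{complex product str and phi}. This buys a uniform treatment of the mixed cases without ever decomposing mixed brackets, at the cost of introducing and verifying the $N_J$ identities; the paper's route is computationally lighter per case but leaves the mixed cases implicit. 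Both are valid proofs of the proposition.
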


\begin{proof}
Let $\{J,E\}$ be a complex product structure on $\huaE$. Define a linear isomorphism $\phi:\huaE_+\rightarrow\huaE_-$ by ${\phi\triangleq} J\mid_{\huaE_+}$. It is straightforward to show that $\phi$ satisfies \eqref{complex product str and phi} by $J^2=-\Id$ and  \eqref{complex str}.

Conversely, define an endomorphism $J$ of $\huaE$ by
\begin{eqnarray}\label{phi to complex str}
J(x+\xi)=-\phi^{-1}(\xi)+\phi(x),\,\,\,\forall x\in\huaE_+,\,\xi\in\huaE_-.
\end{eqnarray}
Then we have $J^2=-\Id$ and $J\circ E=-E\circ J$ by direct computation. For any $\xi_1,\xi_2\in\huaE_-$, let $x_1,x_2\in\huaE_+$ such that $\phi(x_1)=\xi_1,\phi(x_2)=\xi_2$. By \eqref{complex product str and phi}, we have
\begin{eqnarray*}
&&[J\xi_1,\xi_2]_\huaE+[\xi_1,J\xi_2]_\huaE+J[J\xi_1,J\xi_2]_\huaE\\
&=&[-\phi^{-1}(\xi_1),\xi_2]_\huaE+[\xi_1,-\phi^{-1}(\xi_1)]_\huaE+\phi[-\phi^{-1}(\xi_1),-\phi^{-1}(\xi_2)]_\huaE\\
&=&-[x_1,\phi(x_2)]_\huaE-[\phi(x_1),x_2]_\huaE+\phi[x_1,x_2]_\huaE\\
&=&-\phi^{-1}[\phi (x_1),\phi (x_2)]_\huaE\\
&=&J[\xi_1,\xi_2]_\huaE,
\end{eqnarray*}
which implies that \eqref{complex str} holds for all $\xi_1,\xi_2\in\huaE_-$. Similarly, we can deduce that \eqref{complex str} holds for all the other cases. Thus, $J$ is a complex structure and $\{J,E\}$ is a complex product structure on $\huaE$.
\end{proof}

\begin{defi}
Let $(\huaL,\lhd,\rhd)$ be a Leibniz-dendriform algebra. A nondegenerate bilinear form $\omega\in\huaL^*\otimes\huaL^*$ is called {\bf invariant} if for any $x,y,z\in\huaL$, the following conditions hold:
\begin{eqnarray}
\omega(x\lhd y,z)&=&-\omega(y,x\lhd z),\label{invariant1}\\
\omega(x\rhd y,z)&=&\omega(x,y\lhd z+z\rhd y).\label{invariant2}
\end{eqnarray}
\end{defi}
\emptycomment{
Define $\phi:\huaL\rightarrow\huaL^*$ by $\omega(x,y)=<\phi(x),y>$, then the above definition shows that $\phi$ is a representation homomorphism from $(\huaL;L_\lhd,R_\rhd)$ to $(\huaL^*;L^*_\lhd,-L^*_\lhd-R^*_\rhd)$}

A nondegenerate bilinear form $\omega$ naturally induces a linear isomorphism
$\omega^{\sharp}:\huaL\rightarrow\huaL^*$  by
\begin{eqnarray*}
\langle\omega^{\sharp}(x),y\rangle=\omega(x,y),\,\,\,\forall x,y\in\huaL.
\end{eqnarray*}

\begin{pro}\label{phase space complex product}
Let $(\huaL,\lhd,\rhd)$ be a Leibniz-dendriform algebra with a nondegenerate invariant bilinear form $\omega$. Then there is a complex product structure $\{J,E\}$ on the semidirect product Leibniz algebra $\huaL\ltimes_{L^*_\lhd,-L^*_\lhd-R^*_\rhd}\huaL^*$, where the product structure $E$ is given by \eqref{paracomplex str} and the complex structure $J$ is given by
\begin{eqnarray}\label{lei-den invariant to complex str}
J(x+\xi)=-\omega^{\sharp -1}(\xi)+\omega^{\sharp}(x),\,\,\,\forall x\in\huaL,\xi\in\huaL^*.
\end{eqnarray}
\end{pro}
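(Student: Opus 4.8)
The plan is to deduce the statement from Proposition \ref{complex product pro}. By Proposition \ref{paracomplex pro}, the map $E$ of \eqref{paracomplex str} is a paracomplex structure on the phase space $\huaL\ltimes_{L^*_\lhd,-L^*_\lhd-R^*_\rhd}\huaL^*$, and its $\pm 1$-eigenspaces are $\huaL$ and $\huaL^*$ respectively. Since $\omega$ is nondegenerate, $\omega^{\sharp}\colon\huaL\to\huaL^*$ is a linear isomorphism, and the formula \eqref{lei-den invariant to complex str} for $J$ is exactly \eqref{phi to complex str} applied to $\phi=\omega^{\sharp}$. Hence it suffices to verify that $\phi=\omega^{\sharp}$ satisfies the compatibility identity \eqref{complex product str and phi}, where the bracket is that of $\huaL\ltimes_{L^*_\lhd,-L^*_\lhd-R^*_\rhd}\huaL^*$ and $\huaE_+=\huaL$; the relations $J^2=-\Id$ and $J\circ E=-E\circ J$ are then automatic from Proposition \ref{complex product pro}.

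First I would simplify the three terms on the right-hand side of \eqref{complex product str and phi}. Since $\huaL^*$ is an abelian subalgebra of the semidirect product, $[\omega^{\sharp}(x_1),\omega^{\sharp}(x_2)]_\ltimes=0$, so the term $\phi^{-1}[\phi(x_1),\phi(x_2)]_\ltimes$ vanishes. By \eqref{semidirect product} with $l=L^*_\lhd$ and $r=-L^*_\lhd-R^*_\rhd$, one has $[x_1,\omega^{\sharp}(x_2)]_\ltimes=L^*_{\lhd x_1}\omega^{\sharp}(x_2)$ and $[\omega^{\sharp}(x_1),x_2]_\ltimes=(-L^*_{\lhd x_2}-R^*_{\rhd x_2})\omega^{\sharp}(x_1)$. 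Thus \eqref{complex product str and phi} reduces to the equality in $\huaL^*$
\begin{eqnarray*}
\omega^{\sharp}[x_1,x_2]_{\lhd,\rhd}=L^*_{\lhd x_1}\omega^{\sharp}(x_2)+(-L^*_{\lhd x_2}-R^*_{\rhd x_2})\omega^{\sharp}(x_1),\,\,\,\forall x_1,x_2\in\huaL,
\end{eqnarray*}
which I would check by pairing both sides with an arbitrary $z\in\huaL$.

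Pairing the left-hand side with $z$ gives $\omega(x_1\lhd x_2,z)+\omega(x_1\rhd x_2,z)$, which by the invariance conditions \eqref{invariant1} and \eqref{invariant2} equals $-\omega(x_2,x_1\lhd z)+\omega(x_1,x_2\lhd z+z\rhd x_2)$. Pairing the right-hand side with $z$ and using $\langle L^*_{\lhd x}\xi,y\rangle=-\langle\xi,x\lhd y\rangle$ and $\langle R^*_{\rhd x}\xi,y\rangle=-\langle\xi,y\rhd x\rangle$ gives $-\omega(x_2,x_1\lhd z)+\omega(x_1,x_2\lhd z)+\omega(x_1,z\rhd x_2)$, the same expression. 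Hence \eqref{complex product str and phi} holds, and Proposition \ref{complex product pro} yields that $\{J,E\}$, with $J$ as in \eqref{lei-den invariant to complex str}, is a complex product structure on $\huaL\ltimes_{L^*_\lhd,-L^*_\lhd-R^*_\rhd}\huaL^*$. There is no essential obstacle here: the only care needed is the bookkeeping matching the invariance relations of $\omega$ with the dual-representation formulas of the semidirect product, together with the observation that the abelianness of $\huaL^*$ kills the $\phi^{-1}[\phi(x_1),\phi(x_2)]_\ltimes$ term.
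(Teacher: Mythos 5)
Your proposal is correct and follows essentially the same route as the paper: both reduce the statement to Proposition \ref{complex product pro} with $\phi=\omega^{\sharp}$, note that $[\omega^{\sharp}(x_1),\omega^{\sharp}(x_2)]_\ltimes=0$ since $\huaL^*$ is abelian in the semidirect product, and verify \eqref{complex product str and phi} by pairing with an arbitrary $z\in\huaL$ using the invariance conditions \eqref{invariant1}--\eqref{invariant2} and the dual-representation formulas. Your computation matches the paper's verbatim in content, so there is nothing to add.
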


\begin{proof}
By Proposition \ref{paracomplex pro}, $E$ is a paracomplex structure. For any $x,y\in\huaL$, we have
\begin{eqnarray*}
&&[\omega^{\sharp}(x),y]_\ltimes+[x,\omega^{\sharp}(y)]_\ltimes-\omega^{\sharp -1}[\omega^{\sharp}(x),\omega^{\sharp}(y)]_\ltimes\\
&=&[\omega^{\sharp}(x),y]_\ltimes+[x,\omega^{\sharp}(y)]_\ltimes\\
&=&L^*_{\lhd x}\omega^{\sharp}(y)-L^*_{\lhd y}\omega^{\sharp}(x)-R^*_{\rhd y}\omega^{\sharp}(x).
\end{eqnarray*}
By \eqref{invariant1} and \eqref{invariant2}, we have
\begin{eqnarray*}
\langle \omega^{\sharp}[x,y]_{\lhd,\rhd},z\rangle
&=&\langle \omega^{\sharp}(x\lhd y),z\rangle +\langle \omega^{\sharp}(x\rhd y),z\rangle
=\omega(x\lhd y,z)+\omega(x\rhd y,z)\\
&=&-\omega(y,x\lhd z)+\omega(x,y\lhd z+z\rhd y)\\
&=&-\omega(y,L_{\lhd x}z)+\omega(x,L_{\lhd y}z+R_{\rhd y}z)\\
&=&-\langle \omega^{\sharp}(y),L_{\lhd x}z\rangle +\langle \omega^{\sharp}(x),L_{\lhd y}z+R_{\rhd y}z\rangle \\
&=&\langle L^*_{\lhd x}\omega^{\sharp}(y),z\rangle -\langle (L^*_{\lhd y}+R^*_{\rhd y})\omega^{\sharp}(x),z\rangle \\
&=&\langle L^*_{\lhd x}\omega^{\sharp}(y)-L^*_{\lhd y}\omega^{\sharp}(x)-R^*_{\rhd y}\omega^{\sharp}(x),z\rangle ,
\end{eqnarray*}
which implies that
\begin{eqnarray*}
\omega^{\sharp}[x,y]_{\lhd,\rhd}=L^*_{\lhd x}\omega^{\sharp}(y)-L^*_{\lhd y}\omega^{\sharp}(x)-R^*_{\rhd y}\omega^{\sharp}(x).
\end{eqnarray*}
Thus, we have
\begin{eqnarray*}
\omega^{\sharp}[x,y]_{\lhd,\rhd}=
[\omega^{\sharp}(x),y]_\ltimes+[x,\omega^{\sharp}(y)]_\ltimes-\omega^{\sharp -1}[\omega^{\sharp}(x),\omega^{\sharp}(y)]_\ltimes,
\end{eqnarray*}
which implies that $\{J,E\}$ is a complex product structure on $\huaL\ltimes_{L^*_\lhd,-L^*_\lhd-R^*_\rhd}\huaL^*$ by Proposition \ref{complex product pro}.
\end{proof}

\emptycomment{\color{red}
\begin{cor}\label{invariant to complex cor}
In particularly, let $\omega$ be skew-symmetric,  $\{e_1,\cdot\cdot\cdot,e_n,f_1,\cdot\cdot\cdot,f_n\}$ a basis of the Leibniz-dendriform algebra $\huaL$ such that $\omega(e_i,e_j)=\omega(f_i,f_j)=0$, $\omega(e_i,f_j)=\delta_{ij}$ and $\{e^*_1,\cdot\cdot\cdot,e^*_n,f^*_1,\cdot\cdot\cdot,f^*_n\}$ the dual basis, then the complex structure $J$ defined by a nondegenerate invariant bilinear form $\omega$ on $\huaL\ltimes_{L^*_\lhd,-L^*_\lhd-R^*_\rhd}\huaL^*$ is given as follows:
\begin{eqnarray*}
J(e_i)=f^*_i,\,\,\,J(f_j)=-e^*_j,\,\,\,J(e^*_i)=f_i,\,\,\,J(f^*_j)=-e_j.
\end{eqnarray*}
\end{cor}
}
The following proposition shows that it is impossible to find
interesting examples of complex product structures if the complex structure is strict.

\begin{pro}
Let $\{J,E\}$ be a complex product structure on the real Leibniz algebra $(\huaE,[\cdot,\cdot]_\huaE)$, where $J$ is a strict complex structure.
Then $\huaE$ is abelian.
\end{pro}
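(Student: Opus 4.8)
The plan is to read everything off from the eigenspace decomposition of $E$ and the way $J$ exchanges the two pieces. Write $\huaE=\huaE_+\oplus\huaE_-$ for the $(\pm 1)$-eigenspace decomposition of the product structure $E$. By Theorem \ref{product to decomposition} both $\huaE_+$ and $\huaE_-$ are Leibniz subalgebras of $\huaE$, and by Remark \ref{complex product str to paracomplex str} the identity $J\circ E=-E\circ J$ forces $J(\huaE_+)=\huaE_-$ and $J(\huaE_-)=\huaE_+$.

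The heart of the argument is to prove $[\huaE_+,\huaE_+]_\huaE=0$. Fix $x_1,x_2\in\huaE_+$. Since $\huaE_+$ is a subalgebra, $[x_1,x_2]_\huaE\in\huaE_+$. On the other hand $Jx_1,Jx_2\in\huaE_-$, so $[Jx_1,Jx_2]_\huaE\in\huaE_-$ because $\huaE_-$ is a subalgebra; but strictness of $J$ --- first \eqref{complex condition 1} applied to the pair $(x_1,Jx_2)$ and then \eqref{complex condition 2} applied to $(x_1,x_2)$ --- gives $[Jx_1,Jx_2]_\huaE=J[x_1,Jx_2]_\huaE=J^2[x_1,x_2]_\huaE=-[x_1,x_2]_\huaE$. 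Hence $[x_1,x_2]_\huaE\in\huaE_+\cap\huaE_-=0$. Interchanging the roles of $\huaE_+$ and $\huaE_-$ (both are subalgebras and $J$ swaps them) gives $[\huaE_-,\huaE_-]_\huaE=0$ by the same computation.

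Finally, the mixed brackets vanish too: for $x\in\huaE_+$ and $\xi\in\huaE_-$ write $\xi=J\eta$ with $\eta\in\huaE_+$ (possible since $J(\huaE_+)=\huaE_-$); then \eqref{complex condition 2} gives $[x,\xi]_\huaE=[x,J\eta]_\huaE=J[x,\eta]_\huaE=0$ and \eqref{complex condition 1} gives $[\xi,x]_\huaE=[J\eta,x]_\huaE=J[\eta,x]_\huaE=0$. Since $\huaE=\huaE_+\oplus\huaE_-$, bilinearity of the bracket now yields $[\huaE,\huaE]_\huaE=0$, i.e. $\huaE$ is abelian. I do not expect a real obstacle here; the only point to be careful about is that \emph{both} eigenspaces enter as genuine subalgebras and that $J$ truly interchanges them, so that the two containments $[x_1,x_2]_\huaE\in\huaE_+$ and $-[x_1,x_2]_\huaE=[Jx_1,Jx_2]_\huaE\in\huaE_-$ collide. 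Conceptually, this is just the statement that a strict complex structure makes $(\huaE,[\cdot,\cdot]_\huaE)$ a complex Leibniz algebra with $ix:=Jx$ (as observed after Proposition \ref{strict complex equvialent}), so the bracket is $\mathds{C}$-bilinear, $\huaE_+$ is a real form, $\huaE_-=i\huaE_+$, and $[\huaE_-,\huaE_-]_\huaE=-[\huaE_+,\huaE_+]_\huaE$ lies in $\huaE_+\cap\huaE_-=0$; I would present the eigenspace version as the main line and mention this as the underlying reason.
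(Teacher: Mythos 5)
Your proof is correct and follows essentially the same route as the paper: both derive $[Jx_1,Jx_2]_\huaE=-[x_1,x_2]_\huaE$ from strictness of $J$ and conclude that $[\huaE_+,\huaE_+]_\huaE$ and $[\huaE_-,\huaE_-]_\huaE$ land in $\huaE_+\cap\huaE_-=0$. The only difference is in the mixed brackets: you write $\xi=J\eta$ and use \eqref{complex condition 1}--\eqref{complex condition 2} together with the already-established abelianness of $\huaE_+$ to get $[x,\xi]_\huaE=J[x,\eta]_\huaE=0$, whereas the paper computes $E[x_1,J(x_2)]_\huaE$ in two ways via $J\circ E=-E\circ J$ to show the mixed bracket lies in both eigenspaces; your version is slightly more direct and equally valid.
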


\begin{proof}
There is a decomposition $\huaE=\huaE_{+}\oplus\huaE_{-}$, where $\huaE_+,\,\huaE_-$ are Leibniz subalgebras of $\huaE$ and $J(\huaE_+)=\huaE_-$. Since $J$ is a strict complex structure, we have $[x,y]_\huaE=-[Jx,Jy]_\huaE$ for all $x,y\in\huaE$ by \eqref{new bracket} and Proposition \ref{strict complex equvialent}. Then we have $[\huaE_+,\huaE_+]_\huaE=-[\huaE_-,\huaE_-]_\huaE$, which implies that $\huaE_+$ and $\huaE_-$ are abelian. For all $x_1,x_2\in\huaE_+$, by \eqref{complex condition 1} and \eqref{complex condition 2}, we have
\begin{eqnarray*}
E[x_1,J(x_2)]_\huaE&=&EJ[x_1,x_2]_\huaE=-JE[x_1,x_2]_\huaE
=-J[x_1,x_2]_\huaE=-[x_1,J(x_2)]_\huaE,\\
E[x_1,J(x_2)]_\huaE&=&E[-J^2(x_1),J(x_2)]_\huaE=-EJ[J(x_1),J(x_2)]_\huaE=JE[J(x_1),J(x_2)]_\huaE\\
&=&-J[J(x_1),J(x_2)]_\huaE=[x_1,J(x_2)]_\huaE,
\end{eqnarray*}
which implies that $[x_1,J(x_2)]_\huaE\in\huaE_-$ and $[x_1,J(x_2)]_\huaE\in\huaE_+$ respectively. Thus $[\huaE_+,\huaE_-]_\huaE=0$. Similarly, we have $[\huaE_-,\huaE_+]_\huaE=0$. Therefore, $\huaE$ is abelian.
\end{proof}

\emptycomment{
\begin{pro}\label{complex product to Lei-den}
Let $\{J,E\}$ be a complex product structure on the real Leibniz algebra $(\huaE,[\cdot,\cdot]_\huaE)$, where $\huaE=\huaE_{+}\oplus\huaE_{-}$ and $J(\huaE_+)=\huaE_-$. Suppose $\huaE_-$ is two-side ideal of $\huaE$. Then $\huaE_-$ is abelian and $\huaE$ is isomorphic to the semidirect product $\huaE_{+}\ltimes_{L,R}\huaE_{-}$. Moreover, there is a  compatible Leibniz-dendriform algebra structure on $\huaE_+$ define as:
\begin{eqnarray}
x_1\lhd x_2&=&-J[x_1,J(x_2)]_\huaE,\\
x_1\rhd x_2&=&-J[J(x_1),x_2]_\huaE,
\end{eqnarray}
for all $x_1,x_2\in\huaE_+$.
\end{pro}

\begin{proof}
For all $\xi_1,\xi_2\in\huaE_-$, we have \begin{eqnarray*}
J[\xi_1,\xi_2]_\huaE=[J\xi_1,\xi_2]_\huaE+[\xi_1,J\xi_2]_\huaE+J[J\xi_1,J\xi_2]_\huaE.
\end{eqnarray*}
The left-hand side of the above equation is in $\huaE_+$, whereas the right-hand side is in $\huaE_-$ since $\huaE_-$ is two-side ideal of $\huaE$, which implies that $J[\xi_1,\xi_2]_\huaE=0$, showing that $\huaE_-$ is an abelian ideal. Then $\huaE$ is isomorphic to the semidirect product $\huaE_{+}\ltimes_{L,R}\huaE_{-}$, where the representation $L,R:\huaE_+\longrightarrow \gl(\huaE_-)$ are respectively given by $L_x(\xi)=[x,\xi]_\huaE$ and $R_x(\xi)=[\xi,x]_\huaE$ for all $x\in\huaE_+,\,\xi\in\huaE_-$. By (\ref{Leibniz}), (\ref{complex str}) and $J^2=-\Id$, for all $x_1,x_2,x_3\in\huaE_+$, we have
\begin{eqnarray*}
&&x_1\lhd(x_2\lhd x_3)-x_2\lhd(x_1\lhd x_3)-(x_1\rhd x_2)\lhd x_3\\
&=&-x_1\lhd J[x_2, J(x_3)]_\huaE+x_2\lhd J[x_1, J(x_3)]_\huaE+J[J(x_1),x_2]_\huaE\lhd x_3\\
&=&-J[x_1,[x_2,J(x_3)]]_\huaE+J[x_2,[x_1, J(x_3)]]_\huaE-J[J[J(x_1),x_2]_\huaE\,J(x_3)]\\
&=&-J[[x_1,x_2],J(x_3)]_\huaE-J[J[J(x_1),x_2]_\huaE\,J(x_3)]\\
&=&J[J[x_1,J(x_2)]-[J(x_1),J(x_2)],J(x_3)]_\huaE=J[J[x_1,J(x_2)],J(x_3)]_\huaE\\
&=&(x_1\lhd x_2)\lhd x_3,\\
&&(x_1\lhd x_2)\rhd x_3+x_2\rhd(x_1\lhd x_3)+x_2\rhd(x_1\rhd x_3),\\
&=&-J[x_1,J(x_2)]_\huaE\rhd x_3-x_2\rhd J[x_1, J(x_3)]_\huaE-x_2\rhd J[J(x_1),x_3]_\huaE\\
&=&-J[[x_1,J(x_2)]_\huaE,x_3]_\huaE+J[J(x_2),J[x_1,J(x_3)]_\huaE]_\huaE+J[J(x_2),J[J(x_1),x_3]_\huaE]_\huaE\\
&=&-J[[x_1,J(x_2)]_\huaE,x_3]_\huaE+J[J(x_2),[J(x_1),J(x_3)]_\huaE-[x_1,x_3]_\huaE]_\huaE\\
&=&-J[[x_1,J(x_2)]_\huaE,x_3]_\huaE-J[J(x_2),[x_1,x_3]_\huaE]_\huaE=-J[x_1,[J(x_2),x_3]_\huaE]_\huaE\\
&=&x_1\lhd(x_2\rhd x_3),\\
&&(x_1\rhd x_2)\rhd x_3+x_2\lhd(x_1\rhd x_3)-x_1\rhd(x_2\lhd x_3)\\
&=&-J[J(x_1),x_2]_\huaE\rhd x_3-x_2\lhd J[J(x_1),x_3]_\huaE+x_1\rhd J[x_2, J(x_3)]_\huaE\\
&=&-J[[J(x_1),x_2]_\huaE,x_3]_\huaE-J[x_2,[J(x_1),x_3]_\huaE]_\huaE-J[J(x_1),J[x_2, J(x_3)]_\huaE]_\huaE\\
&=&-J[J(x_1),[x_2,x_3]_\huaE]_\huaE-J[J(x_1),J[x_2, J(x_3)]_\huaE]_\huaE\\
&=&-J[J(x_1),[J(x_2), J(x_3)]_\huaE-J[J(x_2), x_3]_\huaE]_\huaE=J[J(x_1),J[J(x_2), x_3]_\huaE]_\huaE\\
&=&x_1\rhd(x_2\rhd x_3),
\end{eqnarray*}
which shows that conditions (\ref{p1}), (\ref{p2}) and (\ref{p3}) hold on $\huaE_+$, i.e. $(\huaE_+,\lhd,\rhd)$ is a Leibniz-dendriform algebra. Since $\huaE_-$ is abelian, for all $x_1,x_2,x_3\in\huaE_+$, we have
\begin{eqnarray*}
x_1\lhd x_2+x_1\rhd x_2&=&-J[x_1,J(x_2)]_\huaE-J[J(x_1),x_2]_\huaE=[x_1,x_2]_\huaE-[J(x_1),J(x_2)]_\huaE\\
&=&[x_1,x_2]_\huaE,
\end{eqnarray*}
which implies that $(\huaE_+,\lhd,\rhd)$ is the compatible Leibniz-dendriform algebra on Leibniz algebra $(\huaE_+,[\cdot,\cdot]_\huaE)$.
\end{proof}}

At the end of this section, we will show that a complex product structure can induce certain Leibniz-dendriform algebra structures.
\begin{pro}
Let $\{J,E\}$ be a complex product structure on the real Leibniz algebra $(\huaE,[\cdot,\cdot]_\huaE)$, where $\huaE=\huaE_{+}\oplus\huaE_{-}$ as vector spaces, $\huaE_{+}$ and $\huaE_{-}$ are Leibniz subalgebras of $\huaE$ and $J(\huaE_+)=\huaE_-$. Then $\huaE_+$ and $\huaE_-$ carry  compatible Leibniz-dendriform algebra structures respectively.
\end{pro}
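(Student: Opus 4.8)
The plan is to produce on the subalgebra $\huaE_+$ a representation of the Leibniz algebra $(\huaE_+,[\cdot,\cdot]_\huaE)$ together with an invertible relative Rota-Baxter operator taking values in $\huaE_+$, and then to invoke Proposition~\ref{o to Lei-Den}; the assertion for $\huaE_-$ follows by the same argument with the two summands interchanged. Throughout, let $\pr_{\huaE_+},\pr_{\huaE_-}\colon\huaE\to\huaE_\pm$ be the projections determined by $\huaE=\huaE_+\oplus\huaE_-$, and recall from Remark~\ref{complex product str to paracomplex str} that $J$ restricts to a linear isomorphism $J|_{\huaE_+}\colon\huaE_+\to\huaE_-$, whose inverse is $-J|_{\huaE_-}$ since $J^2=-\Id$.

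First I would define $l,r\colon\huaE_+\to\gl(\huaE_-)$ by
\[
l(x)\xi=\pr_{\huaE_-}[x,\xi]_\huaE,\qquad r(x)\xi=\pr_{\huaE_-}[\xi,x]_\huaE,\qquad x\in\huaE_+,\ \xi\in\huaE_-,
\]
and show that $(\huaE_-;l,r)$ is a representation of $(\huaE_+,[\cdot,\cdot]_\huaE)$, i.e.\ that \eqref{rep-1}, \eqref{rep-2} and \eqref{rep-3} hold. Two facts drive this: (a) $\huaE_+$ is a subalgebra, so $\pr_{\huaE_-}[a,b]_\huaE=0$ whenever $a,b\in\huaE_+$; and (b) the identity $[[x,z]_\huaE+[z,x]_\huaE,y]_\huaE=0$, valid in any Leibniz algebra (it is precisely \eqref{rep-3} for the regular representation $(\huaE;L,R)$). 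Indeed, \eqref{rep-1} is obtained by applying $\pr_{\huaE_-}$ to $[[x_1,x_2]_\huaE,\xi]_\huaE=[x_1,[x_2,\xi]_\huaE]_\huaE-[x_2,[x_1,\xi]_\huaE]_\huaE$, splitting each inner bracket along $\huaE_+\oplus\huaE_-$ and killing the $\huaE_+$-parts by (a); \eqref{rep-2} follows the same way from $[\xi,[x_1,x_2]_\huaE]_\huaE=[x_1,[\xi,x_2]_\huaE]_\huaE-[[x_1,\xi]_\huaE,x_2]_\huaE$; and \eqref{rep-3}, which here reads $r_{x_2}\circ(l_{x_1}+r_{x_1})=0$, follows from (b) applied to $[x_1,\xi]_\huaE+[\xi,x_1]_\huaE$ together with (a).

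Next I would check that $T:=-J|_{\huaE_-}\colon\huaE_-\to\huaE_+$ is an invertible relative Rota-Baxter operator on $(\huaE_+,[\cdot,\cdot]_\huaE)$ with respect to $(\huaE_-;l,r)$, with inverse $J|_{\huaE_+}$. Let $v_1,v_2\in\huaE_-$ and put $x_i:=Tv_i\in\huaE_+$, so $v_i=Jx_i$. Applying \eqref{complex str} to $x_1,x_2\in\huaE_+$ gives
\[
J[x_1,x_2]_\huaE=[Jx_1,x_2]_\huaE+[x_1,Jx_2]_\huaE+J[Jx_1,Jx_2]_\huaE.
\]
Since $\huaE_-$ is a subalgebra, $[Jx_1,Jx_2]_\huaE\in\huaE_-$, hence $J[Jx_1,Jx_2]_\huaE\in\huaE_+$, while $J[x_1,x_2]_\huaE\in\huaE_-$; applying $\pr_{\huaE_-}$ therefore yields $J[x_1,x_2]_\huaE=\pr_{\huaE_-}\big([x_1,Jx_2]_\huaE+[Jx_1,x_2]_\huaE\big)=l(x_1)v_2+r(x_2)v_1$. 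Consequently
\[
T\big(l(Tv_1)v_2+r(Tv_2)v_1\big)=-J\big(J[x_1,x_2]_\huaE\big)=[x_1,x_2]_\huaE=[Tv_1,Tv_2]_\huaE,
\]
which is the relative Rota-Baxter identity. By Proposition~\ref{o to Lei-Den}, $\huaE_+$ carries a compatible Leibniz-dendriform algebra structure, explicitly $x\lhd y=-J\pr_{\huaE_-}[x,Jy]_\huaE$ and $x\rhd y=-J\pr_{\huaE_-}[Jx,y]_\huaE$ for $x,y\in\huaE_+$. Running the identical argument with the roles of $\huaE_+$ and $\huaE_-$ exchanged---using the representation $l'(\xi)x=\pr_{\huaE_+}[\xi,x]_\huaE$, $r'(\xi)x=\pr_{\huaE_+}[x,\xi]_\huaE$ of $\huaE_-$ on $\huaE_+$, and the operator $-J|_{\huaE_+}\colon\huaE_+\to\huaE_-$---equips $\huaE_-$ with a compatible Leibniz-dendriform structure as well.

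The only genuinely non-routine step, the remainder being a direct application of Proposition~\ref{o to Lei-Den} and of \eqref{complex str}, is the verification that $(\huaE_-;l,r)$ is a representation: this is where one must simultaneously exploit that $\huaE_+$ is a subalgebra (so that the "wrong" projections vanish once brackets are taken inside $\huaE_+$) and the Leibniz-algebra identity $[[x,z]_\huaE+[z,x]_\huaE,y]_\huaE=0$, needed for the third representation axiom.
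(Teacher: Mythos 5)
Your proposal is correct and follows essentially the same route as the paper: both proofs define the actions of $\huaE_+$ on $\huaE_-$ (and of $\huaE_-$ on $\huaE_+$) by projecting the mixed brackets, use the integrability condition \eqref{complex str} together with the fact that $\huaE_\pm$ are subalgebras to produce an invertible relative Rota-Baxter operator, and then invoke Proposition \ref{o to Lei-Den}. The only cosmetic difference is that the paper conjugates these actions by $J$ (so that the identity map becomes the invertible relative Rota-Baxter operator on each summand), whereas you keep the representation of $\huaE_+$ on $\huaE_-$ and take $-J|_{\huaE_-}$ itself as the operator; both packagings yield the same compatible products $x\lhd y=-J\,\pr_{\huaE_-}[x,Jy]_\huaE$ and $x\rhd y=-J\,\pr_{\huaE_-}[Jx,y]_\huaE$.
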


\begin{proof}
For any $x\in\huaE_+$ and $\xi\in\huaE_-$, define ${l^+},{r^+}:\huaE_+\rightarrow\gl(\huaE_-)$ and ${l^-},{r^-}:\huaE_-\rightarrow\gl(\huaE_+)$ by:
\begin{eqnarray*}
\label{eq:decompositon to matched pair1}[x,\xi]_\huaE&=&r^-_\xi(x)+l^+_x(\xi),\\
\label{eq:decompositon to matched pair2}{[\xi,x]}_\huaE&=&l^-_\xi(x)+r^+_x(\xi).
\end{eqnarray*}
Then we define $\textswab{l}^+,\textswab{r}^+:\huaE_+\rightarrow\gl(\huaE_+)$ and $\textswab{l}^-,\textswab{r}^-:\huaE_-\rightarrow\gl(\huaE_-)$ as follows:
\begin{eqnarray*}
&&\textswab{l}^+_x=-J{l^+_x}J,\,\,\,\textswab{r}^+_x=-J{r^+_x}J;\\
&&\textswab{l}^-_\xi=-J{l^-_\xi}J,\,\,\,\textswab{r}^-_\xi=-J{r^-_\xi}J.
\end{eqnarray*}
By direct computation we obtain that $(\huaE_+;\textswab{l}^+,\textswab{r}^+)$ is a representations of $\huaE_+$ and $(\huaE_-;\textswab{l}^-,\textswab{r}^-)$ is a representations of $\huaE_-$.

Considering the projections $\pi_+:\huaE\rightarrow\huaE_+$ and $\pi_-:\huaE\rightarrow\huaE_-$, we have
\begin{eqnarray*}
&&\textswab{l}^+_{x_1}(x_2)=-{\pi_+}J[x_1,J(x_2)]_\huaE,\,\,\,
\textswab{r}^+_{x_1}(x_2)=-{\pi_+}J[J(x_2),x_1]_\huaE;\\
&&\textswab{l}^-_{\xi_1}(\xi_2)=-{\pi_-}J[\xi_1,J(\xi_2)]_\huaE,\,\,\,\,\,
\textswab{r}^-_{\xi_1}(\xi_2)=-{\pi_-}J[J(\xi_2),\xi_1]_\huaE,
\end{eqnarray*}
for any $x_1,x_2\in\huaE_+$ and $\xi_1,\xi_2\in\huaE_-$. It is easy to see that $\Id:\huaE_+\rightarrow\huaE_+$ is an invertible Rota-Baxter operator associated to $(\huaE_+;\textswab{l}^+,\textswab{r}^+)$ and $\Id:\huaE_-\rightarrow\huaE_-$ is an invertible Rota-Baxter operator associated to $(\huaE_-;\textswab{l}^-,\textswab{r}^-)$. By Proposition \ref{o to Lei-Den}, the compatible Leibniz-dendriform algebra structures on $\huaE_+$ and $\huaE_-$ are respectively given by
\begin{eqnarray*}
&&x_1\lhd^+ x_2=\textswab{l}^+_{x_1}(x_2),\,\,\,
x_1\rhd^+ x_2=\textswab{r}^+_{x_2}(x_1);\\
&&\xi_1\lhd^- \xi_2=\textswab{l}^-_{\xi_1}(\xi_2),\,\,\,\,\,
\xi_1\rhd^- \xi_2=\textswab{r}^-_{\xi_2}(\xi_1).
\end{eqnarray*}
for all $x_1,x_2\in\huaE_+$ and $\xi_1,\xi_2\in\huaE_-.$
\end{proof}

\section{Pseudo-K\"{a}hler structures on Leibniz algebras}\label{sec:symcom}

In this section, we introduce the notion of a pseudo-K\"{a}hler structure on a Leibniz algebra by adding a compatibility condition between a symplectic structure and a complex structure on the Leibniz algebra. Furthermore, we investigate the relation between para-K\"{a}hler structures and pseudo-K\"{a}hler structures on a Leibniz algebra.

\begin{defi}
Let $\huaB$ be a symplectic structure and $J$ a complex structure on the real Leibniz algebra $(\huaE,[\cdot,\cdot]_\huaE)$. The triple $(\huaE,\huaB,J)$ is called a real {\bf pseudo-K\"{a}hler} Leibniz algebra if
\begin{eqnarray}
\huaB(Jx,Jy)=\huaB(x,y),\,\,\,\forall x,y\in\huaE.\label{pseudo-Kahler str}
\end{eqnarray}
\end{defi}

\begin{pro}
Let $(\huaE,\huaB,J)$ be a real pseudo-K\"{a}hler Leibniz algebra. Then $(\huaE,S)$ is a pseudo-Riemannian Leibniz algebra, where $S$ is defined by
\begin{eqnarray}\label{pseudo-Kahler to pseudo-Riemannian}
S(x,y)=\huaB(x,Jy),\,\,\,\forall x,y\in\huaE.
\end{eqnarray}
\end{pro}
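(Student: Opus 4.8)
The plan is to verify directly that the bilinear form $S(x,y)=\huaB(x,Jy)$ satisfies the two requirements in the definition of a pseudo-Riemannian Leibniz algebra, namely that $S$ is nondegenerate and \emph{skew-symmetric}; the underlying Leibniz bracket $[\cdot,\cdot]_\huaE$ is untouched, so nothing further is required. This mirrors the proof already given for the para-K\"{a}hler case, with the pseudo-K\"{a}hler compatibility condition $\huaB(Jx,Jy)=\huaB(x,y)$ together with $J^2=-\Id$ now playing the roles that $\huaB(Ex,Ey)=-\huaB(x,y)$ and $E^2=\Id$ played there.

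First I would extract an auxiliary identity from the compatibility condition \eqref{pseudo-Kahler str}. Substituting $y\mapsto Jy$ and using $J^2=-\Id$ gives $\huaB(Jx,-y)=\huaB(x,Jy)$, that is,
$$\huaB(x,Jy)=-\huaB(Jx,y),\,\,\,\forall x,y\in\huaE.$$
Skew-symmetry of $S$ then follows at once from the symmetry of $\huaB$ combined with this identity:
$$S(y,x)=\huaB(y,Jx)=\huaB(Jx,y)=-\huaB(x,Jy)=-S(x,y).$$

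Next I would address nondegeneracy. Since $J^2=-\Id$, the map $J$ is invertible (with $J^{-1}=-J$), so $Jy$ ranges over all of $\huaE$ as $y$ does. Hence if $S(x,y)=\huaB(x,Jy)=0$ for all $y\in\huaE$, then $\huaB(x,z)=0$ for all $z\in\huaE$, and nondegeneracy of $\huaB$ forces $x=0$; the symmetric argument in the other slot is identical. Thus $S$ is nondegenerate, and $(\huaE,S)$ is a pseudo-Riemannian Leibniz algebra.

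The computation is short, and the only point genuinely requiring care is the sign bookkeeping. Because $\huaB$ is \emph{symmetric} rather than skew-symmetric, the form $S(x,y)=\huaB(x,Jy)$ is a priori neither symmetric nor skew, and it is precisely the pseudo-K\"{a}hler condition \eqref{pseudo-Kahler str} that is engineered to convert the symmetric $\huaB$ into a skew-symmetric $S$. I do not anticipate any real obstacle beyond correctly tracking the substitution $y\mapsto Jy$ and the single sign produced by $J^2=-\Id$.
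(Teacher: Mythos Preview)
Your proof is correct and follows essentially the same approach as the paper: derive the identity $\huaB(x,Jy)=-\huaB(Jx,y)$ from the pseudo-K\"{a}hler condition \eqref{pseudo-Kahler str} together with $J^2=-\Id$, combine it with the symmetry of $\huaB$ to obtain skew-symmetry of $S$, and deduce nondegeneracy from that of $\huaB$ and the invertibility of $J$. The paper's argument is slightly terser but the logic is identical.
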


\begin{proof}
By \eqref{pseudo-Kahler str}, we have
$$S(y,x)=\huaB(y,Jx)=-\huaB(Jy,x)=-\huaB(x,Jy)=-S(x,y),$$
which implies that $S$ is skew-symmetric. Moreover, since $\huaB$ is nondegenerate and $J^2=-\Id$, it is obvious that $S$ is nondegenerate. Then $(\huaE,S)$ is a pseudo-Riemannian Leibniz algebra.
\end{proof}

\begin{thm}
Let $(\huaE,\huaB,E)$ be a complex para-K\"{a}hler Leibniz algebra. Then $(\huaE_{\mathds{R}},\huaB_{\mathds{R}},J)$ is a real pseudo-K\"{a}hler Leibniz algebra, where $\huaE_{\mathds{R}}$ is the underlying real Leibniz algebra, $J=iE$ and $\huaB_{\mathds{R}}={\rm Re}(\huaB)$ is the real part of $\huaB$.
\end{thm}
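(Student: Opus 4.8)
The strategy is to obtain each of the three defining features of a real pseudo-K\"{a}hler Leibniz algebra by passing to the underlying real structure and taking real parts, checking that nothing is lost in the process. First I would argue that $J=iE$ is a complex structure on the real Leibniz algebra $\huaE_{\mathds{R}}$. Since $E$ is a paracomplex structure on the complex Leibniz algebra $\huaE$, it is in particular a product structure on $\huaE$, so Proposition \ref{relation between product and complex str} gives that the $\mathds{C}$-linear endomorphism $J=iE$ satisfies $J^2=-\Id$ and the integrability condition \eqref{complex str}. The Leibniz bracket of $\huaE_{\mathds{R}}$ is, by definition, the $\mathds{R}$-bilinear map underlying the $\mathds{C}$-bilinear bracket $[\cdot,\cdot]_\huaE$, and $J$ is $\mathds{R}$-linear; since \eqref{complex str} is an identity in the bracket alone, it persists when $x,y$ run over $\huaE_{\mathds{R}}$. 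Hence $J$ is a complex structure on $\huaE_{\mathds{R}}$ in the sense of Definition \ref{def of complex}.

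Next I would show that $\huaB_{\mathds{R}}={\rm Re}(\huaB)$ is a symplectic structure on $\huaE_{\mathds{R}}$. It is an $\mathds{R}$-bilinear symmetric form, being the real part of a $\mathds{C}$-bilinear symmetric one. For nondegeneracy I would use the elementary identity $\huaB(x,y)={\rm Re}\,\huaB(x,y)-i\,{\rm Re}\,\huaB(x,iy)$, valid because $\huaB$ is $\mathds{C}$-bilinear: if $\huaB_{\mathds{R}}(x,\cdot)$ vanishes on all of $\huaE_{\mathds{R}}$, then in particular ${\rm Re}\,\huaB(x,iy)=0$ for every $y$, so $\huaB(x,\cdot)\equiv 0$ and $x=0$ by nondegeneracy of $\huaB$. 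Finally, applying the $\mathds{R}$-linear map ${\rm Re}$ to the complex identity \eqref{symplectic} for $\huaB$ yields \eqref{symplectic} for $\huaB_{\mathds{R}}$ over $\huaE_{\mathds{R}}$, since the brackets coincide. Thus $\huaB_{\mathds{R}}$ is a symplectic structure on $\huaE_{\mathds{R}}$.

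It then remains to check the compatibility \eqref{pseudo-Kahler str}. Using $\mathds{C}$-bilinearity of $\huaB$, the definition $J=iE$, and the complex para-K\"{a}hler identity $\huaB(Ex,Ey)=-\huaB(x,y)$, one computes
\begin{eqnarray*}
\huaB_{\mathds{R}}(Jx,Jy)={\rm Re}\,\huaB(iEx,iEy)={\rm Re}\big(-\huaB(Ex,Ey)\big)={\rm Re}\,\huaB(x,y)=\huaB_{\mathds{R}}(x,y),
\end{eqnarray*}
for all $x,y\in\huaE_{\mathds{R}}$, which is precisely \eqref{pseudo-Kahler str}; therefore $(\huaE_{\mathds{R}},\huaB_{\mathds{R}},J)$ is a real pseudo-K\"{a}hler Leibniz algebra. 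The only step that is not completely formal is the nondegeneracy of ${\rm Re}(\huaB)$, which is why I single out the recovery identity above; every other step reduces to the observation that realification changes neither the bracket nor the $\mathds{R}$-linearity of the maps involved, and that ${\rm Re}$ commutes with all the $\mathds{R}$-linear operations appearing in the relevant identities.
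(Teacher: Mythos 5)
Your proposal is correct and follows essentially the same route as the paper: invoke Proposition \ref{relation between product and complex str} to get the complex structure $J=iE$, recover $\huaB$ from $\huaB_{\mathds{R}}$ via the same $\mathds{C}$-bilinearity identity (the paper writes it as $\huaB(x,y)=\huaB_{\mathds{R}}(x,y)+i\huaB_{\mathds{R}}(-ix,y)$) to obtain nondegeneracy, and verify \eqref{pseudo-Kahler str} by the identical computation ${\rm Re}\,\huaB(iEx,iEy)={\rm Re}\,\huaB(x,y)$. Your explicit check that \eqref{symplectic} survives taking real parts is a detail the paper leaves implicit, but it is the same argument.
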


\begin{proof}
We obtain that $J=iE$ is a complex structure on the complex Leibniz algebra $\huaE$ by Proposition \ref{relation between product and complex str}.
Thus, $J$ is also a complex structure on the real Leibniz algebra $\huaE_{\mathds{R}}$. It is obvious that $\huaB_{\mathds{R}}$ is symmetric. If for all $x\in\huaE,~\huaB_{\mathds{R}}(x,y)=0$, then we have
$$\huaB(x,y)=\huaB_{\mathds{R}}(x,y)+i\huaB_{\mathds{R}}(-ix,y)=0.$$
Since $\huaB$ is nondegenerate, we obtain $y=0$, which shows that $\huaB_{\mathds{R}}$ is nondegenerate. Therefore, $\huaB_{\mathds{R}}$ is a symplectic structure on the real Leibniz algebra $\huaE_{\mathds{R}}$. By \eqref{para-Kahler str}, we have
$$\huaB_{\mathds{R}}(Jx,Jy)={\rm Re}(\huaB(iEx,iEy))={\rm Re}(-\huaB(Ex,Ey))
={\rm Re}(\huaB(x,y))=\huaB_{\mathds{R}}(x,y).$$
Thus, $(\huaE_{\mathds{R}},\huaB_{\mathds{R}},iE)$ is a real pseudo-K\"{a}hler Leibniz algebra.
\end{proof}

Conversely, we have
\begin{thm}
Let $(\huaE,\huaB,J)$ be a real pseudo-K\"{a}hler Leibniz algebra. Then $(\huaE_{\mathds{C}},\huaB_{\mathds{C}},E)$ is a complex para-K\"{a}hler Leibniz algebra, where $\huaE_{\mathds{C}}=\huaE\otimes_{\mathds{R}}{\mathds{C}}$ is the complexification of $\huaE$, $E=-iJ_{\mathds{C}}$, $J_{\mathds{C}}$ is defined by \eqref{JC_1} and $\huaB_{\mathds{C}}$ is the complexification of $\huaB$, that is,
$$\huaB_{\mathds{C}}(x+iy,z+iw)=\huaB(x,z)-\huaB(y,w)+i\huaB(x,w)+i\huaB(y,z),\,\,\,\forall x,y,z,w\in\huaE.$$
\end{thm}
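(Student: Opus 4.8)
The plan is to check, one at a time, the three ingredients in the definition of a complex para-K\"ahler Leibniz algebra for the triple $(\huaE_{\mathds{C}},\huaB_{\mathds{C}},E)$: that $E$ is a paracomplex structure on the complex Leibniz algebra $\huaE_{\mathds{C}}$, that $\huaB_{\mathds{C}}$ is a symplectic structure on $\huaE_{\mathds{C}}$, and that the compatibility \eqref{para-Kahler str}, namely $\huaB_{\mathds{C}}(Ex,Ey)=-\huaB_{\mathds{C}}(x,y)$, holds. The guiding principle throughout is that $J_{\mathds{C}}$ and $\huaB_{\mathds{C}}$ are the $\mathds{C}$-(bi)linear extensions of $J$ and $\huaB$, so any $\mathds{R}$-multilinear identity satisfied by $J$ and $\huaB$ on $\huaE$ automatically passes to the corresponding $\mathds{C}$-multilinear identity on $\huaE_{\mathds{C}}$, because two $\mathds{C}$-multilinear maps agreeing on a real basis of $\huaE$ agree on all of $\huaE_{\mathds{C}}$. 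With this in hand, the first ingredient is immediate: since $J$ is a complex structure on the real Leibniz algebra $\huaE$, Corollary \ref{complex to paracomplex} gives directly that $E=-iJ_{\mathds{C}}$ is a paracomplex structure on $(\huaE_{\mathds{C}},[\cdot,\cdot]_{\huaE_{\mathds{C}}})$, with $\pm1$-eigenspaces $\huaE_i$ and $\huaE_{-i}$ of equal dimension.

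Second, I would verify that $\huaB_{\mathds{C}}$ is a symplectic structure on $\huaE_{\mathds{C}}$. Symmetry of $\huaB_{\mathds{C}}$ is immediate from the symmetry of $\huaB$ together with the displayed formula defining $\huaB_{\mathds{C}}$. For nondegeneracy, suppose $\huaB_{\mathds{C}}(x+iy,z+iw)=0$ for all $z+iw\in\huaE_{\mathds{C}}$; taking $w=0$ and separating real and imaginary parts yields $\huaB(x,z)=0$ and $\huaB(y,z)=0$ for all $z\in\huaE$, whence $x=y=0$ by nondegeneracy of $\huaB$. Finally, identity \eqref{symplectic} holds for $\huaB$ on $\huaE$, and both of its sides are $\mathds{R}$-trilinear in $(x,y,z)$; by the guiding principle their $\mathds{C}$-trilinear extensions coincide, so \eqref{symplectic} holds for $\huaB_{\mathds{C}}$ on $\huaE_{\mathds{C}}$. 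Hence $(\huaE_{\mathds{C}},[\cdot,\cdot]_{\huaE_{\mathds{C}}},\huaB_{\mathds{C}})$ is a symplectic Leibniz algebra.

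Third, for the compatibility condition I would compute, using $E=-iJ_{\mathds{C}}$ and $\mathds{C}$-bilinearity of $\huaB_{\mathds{C}}$,
\[ \huaB_{\mathds{C}}(Ex,Ey)=\huaB_{\mathds{C}}(-iJ_{\mathds{C}}x,-iJ_{\mathds{C}}y)=(-i)(-i)\,\huaB_{\mathds{C}}(J_{\mathds{C}}x,J_{\mathds{C}}y)=-\huaB_{\mathds{C}}(J_{\mathds{C}}x,J_{\mathds{C}}y). \]
Now \eqref{pseudo-Kahler str} gives $\huaB(Jx,Jy)=\huaB(x,y)$ for all $x,y\in\huaE$, and since $\huaB_{\mathds{C}}(J_{\mathds{C}}\cdot,J_{\mathds{C}}\cdot)$ and $\huaB_{\mathds{C}}(\cdot,\cdot)$ are the $\mathds{C}$-bilinear extensions of the two sides, they agree on all of $\huaE_{\mathds{C}}$. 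Therefore $\huaB_{\mathds{C}}(Ex,Ey)=-\huaB_{\mathds{C}}(x,y)$ for all $x,y\in\huaE_{\mathds{C}}$, which is precisely \eqref{para-Kahler str}. Combining the three steps shows that $(\huaE_{\mathds{C}},\huaB_{\mathds{C}},E)$ is a complex para-K\"ahler Leibniz algebra.

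The argument is essentially routine once the extension principle is isolated; the only point needing a little care is to confirm that the displayed formula for $\huaB_{\mathds{C}}$ is genuinely the $\mathds{C}$-bilinear (not sesquilinear) extension of $\huaB$, after which symmetry, nondegeneracy, the cocycle-type identity \eqref{symplectic}, and the compatibility all transfer mechanically; I do not expect any serious obstacle. It is also worth remarking, for completeness, that this construction is inverse to the preceding theorem, since $\mathrm{Re}(\huaB_{\mathds{C}})=\huaB$ and $i\cdot(-iJ_{\mathds{C}})=J_{\mathds{C}}$ restricts to $J$ on the real form $\huaE$.
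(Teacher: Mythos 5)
Your proposal is correct and follows essentially the same route as the paper: cite Corollary \ref{complex to paracomplex} for the paracomplex structure $E=-iJ_{\mathds{C}}$, observe that $\huaB_{\mathds{C}}$ inherits symmetry, nondegeneracy and the identity \eqref{symplectic} from $\huaB$, and deduce $\huaB_{\mathds{C}}(Ex,Ey)=-\huaB_{\mathds{C}}(x,y)$ from \eqref{pseudo-Kahler str}. The only cosmetic difference is that you verify the compatibility by $\mathds{C}$-bilinearity of $\huaB_{\mathds{C}}$ and the extension principle, whereas the paper does the same computation explicitly in real and imaginary parts on elements $x+iy$, $z+iw$; both are valid and equivalent.
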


\begin{proof}
By Corollary \ref{complex to paracomplex}, $E=-iJ_{\mathds{C}}$ is a paracomplex structure on the complex Leibniz algebra $\huaE_{\mathds{C}}$. It is obvious that $\huaB_{\mathds{C}}$ is symmetric and nondegenerate. Moreover, since $\huaB$ is a symplectic structure on $\huaE$, we deduce that $\huaB_{\mathds{C}}$ is a symplectic structure on $\huaE_{\mathds{C}}$. Finally, by \eqref{pseudo-Kahler str}, we have
\begin{eqnarray*}
\huaB_{\mathds{C}}(E(x+iy),E(z+iw))&=&\huaB_{\mathds{C}}(Jy-iJx,Jw-iJz)\\
&=&\huaB(Jy,Jw)-\huaB(Jx,Jz)-i\huaB(Jy,Jz)-i\huaB(Jx,Jz)\\
&=&\huaB(y,w)-\huaB(x,z)-i\huaB(y,z)-i\huaB(x,z)\\
&=&-\huaB_{\mathds{C}}(x+iy,z+iw).
\end{eqnarray*}
Therefore, $(\huaE_{\mathds{C}},\huaB_{\mathds{C}},-iJ_{\mathds{C}})$ is a complex para-K\"{a}hler Leibniz algebra.
\end{proof}

\emptycomment{
\begin{defi}
A real {\bf K\"{a}hler} Leibniz algebra is a real pseudo-K\"{a}hler Leibniz algebra $(\huaE,\huaB,J)$, where the associated pseudo-Riemannian metric $S$ is positive definite??.
\end{defi}
}

At the end, we construct pseudo-K\"{a}hler Leibniz algebras using   Leibniz-dendriform algebras with   skew-symmetric invariant bilinear forms.

\begin{pro}
Let $(\huaL,\lhd,\rhd)$ be a real Leibniz-dendriform algebra with a skew-symmetric and nondegenerate invariant bilinear form $\omega$. Then $(\huaL\ltimes_{L^*_\lhd,-L^*_\lhd-R^*_\rhd}\huaL^*,\huaB,J)$ is a real pseudo-K\"{a}hler Leibniz algebra, where $J$ is given by \eqref{lei-den invariant to complex str} and $\huaB$ is given by \eqref{phase space symplectic}.
\end{pro}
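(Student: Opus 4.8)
The plan is to verify the three defining ingredients of a real pseudo-K\"{a}hler Leibniz algebra for the triple $(\huaL\ltimes_{L^*_\lhd,-L^*_\lhd-R^*_\rhd}\huaL^*,\huaB,J)$, observing that two of them are already available from the previous sections. First, the bilinear form $\huaB$ of \eqref{phase space symplectic} is the canonical form of a phase space; by Theorem \ref{phase space and Lei-Den}, $(\huaL\ltimes_{L^*_\lhd,-L^*_\lhd-R^*_\rhd}\huaL^*,[\cdot,\cdot]_\ltimes,\huaB)$ is a symplectic Leibniz algebra, so $\huaB$ is symmetric, nondegenerate and satisfies \eqref{symplectic}. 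Second, since $\omega$ is nondegenerate and invariant, Proposition \ref{phase space complex product} produces a complex product structure $\{J,E\}$ on $\huaL\ltimes_{L^*_\lhd,-L^*_\lhd-R^*_\rhd}\huaL^*$ whose complex structure $J$ is precisely the map \eqref{lei-den invariant to complex str}; hence $J$ is a complex structure and $J^2=-\Id$. So far the hypothesis that $\omega$ is \emph{skew-symmetric} has not been used; it enters only in the compatibility condition below, and it is exactly what produces the pseudo-K\"{a}hler sign.

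It thus remains to check \eqref{pseudo-Kahler str}. Recall that $\omega^\sharp\colon\huaL\to\huaL^*$ satisfies $\langle\omega^\sharp(z),w\rangle=\omega(z,w)$ for all $z,w\in\huaL$, hence $\langle\xi,w\rangle=\omega(\omega^{\sharp-1}(\xi),w)$ for all $\xi\in\huaL^*$. Let $x,y\in\huaL$ and $\xi,\eta\in\huaL^*$. By \eqref{lei-den invariant to complex str}, $J(x+\xi)=-\omega^{\sharp-1}(\xi)+\omega^\sharp(x)$ has $\huaL$-component $-\omega^{\sharp-1}(\xi)$ and $\huaL^*$-component $\omega^\sharp(x)$, and similarly for $J(y+\eta)$. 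Substituting into \eqref{phase space symplectic} and then using skew-symmetry of $\omega$,
\begin{eqnarray*}
\huaB(J(x+\xi),J(y+\eta))&=&\langle-\omega^{\sharp-1}(\xi),\omega^\sharp(y)\rangle+\langle\omega^\sharp(x),-\omega^{\sharp-1}(\eta)\rangle\\
&=&-\omega(y,\omega^{\sharp-1}(\xi))-\omega(x,\omega^{\sharp-1}(\eta))\\
&=&\omega(\omega^{\sharp-1}(\xi),y)+\omega(\omega^{\sharp-1}(\eta),x)\\
&=&\langle\xi,y\rangle+\langle\eta,x\rangle=\huaB(x+\xi,y+\eta),
\end{eqnarray*}
which is \eqref{pseudo-Kahler str}. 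Therefore $(\huaL\ltimes_{L^*_\lhd,-L^*_\lhd-R^*_\rhd}\huaL^*,\huaB,J)$ is a real pseudo-K\"{a}hler Leibniz algebra.

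There is no real obstacle here: all of the structural content---symplecticity of $\huaB$ and integrability of $J$---is inherited from Theorem \ref{phase space and Lei-Den} and Proposition \ref{phase space complex product}, and the compatibility is the short computation above. The only point requiring care is the bookkeeping in that computation: keeping track of which summand of $\huaL\oplus\huaL^*$ each term of $J(x+\xi)$ lies in, and tracking the single sign flip coming from skew-symmetry of $\omega$. It is worth recording that this sign is precisely why the statement demands a \emph{skew-symmetric} $\omega$: a symmetric invariant $\omega$ would instead give $\huaB(Jx,Jy)=-\huaB(x,y)$, matching the symmetric/para-K\"{a}hler side of the theory developed in Section \ref{sec:sympro} rather than the pseudo-K\"{a}hler side.
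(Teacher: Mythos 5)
Your proof is correct and follows essentially the same route as the paper: it invokes Theorem \ref{phase space and Lei-Den} and Proposition \ref{phase space complex product} to get the symplectic structure $\huaB$ and the complex structure $J$, and then verifies \eqref{pseudo-Kahler str} by the same direct computation using the definition of $\omega^{\sharp}$ and the skew-symmetry of $\omega$. Your closing remark about where skew-symmetry enters (and what a symmetric $\omega$ would give instead) is accurate but not needed for the argument.
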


\begin{proof}
By Theorem \ref{phase space and Lei-Den} and Proposition \ref{phase space complex product}, $\huaB$ is a symplectic structure and $J$ is a complex structure on the semidirect product Leibniz algebra $(\huaL\ltimes_{L^*_\lhd,-L^*_\lhd-R^*_\rhd}\huaL^*,[\cdot,\cdot]_\ltimes)$. For any $x,y\in\huaL$, $\xi,\eta\in\huaL^*$, we have
\begin{eqnarray*}
\huaB(J(x+\xi),J(y+\eta))&=&
\huaB(-\omega^{\sharp -1}(\xi)+\omega^{\sharp}(x),
-\omega^{\sharp -1}(\eta)+\omega^{\sharp}(y))\\
&=&-\langle\omega^{\sharp}(y),\omega^{\sharp -1}(\xi)\rangle
-\langle\omega^{\sharp}(x),\omega^{\sharp -1}(\eta)\rangle\\
&=&-\omega(y,\omega^{\sharp -1}(\xi))-\omega(x,\omega^{\sharp -1}(\eta))\\
&=&\omega(\omega^{\sharp -1}(\xi),y)+\omega(\omega^{\sharp -1}(\eta),x)\\
&=&\langle\xi,y\rangle+\langle\eta,x\rangle\\
&=&\huaB(x+\xi,y+\eta),
\end{eqnarray*}
which implies that $(\huaL\ltimes_{L^*_\lhd,-L^*_\lhd-R^*_\rhd}\huaL^*,\huaB,J)$ is a pseudo-K\"{a}hler Leibniz algebra.
\emptycomment{
Let $\{e_1,\cdot\cdot\cdot,e_n,f_1,\cdot\cdot\cdot,f_n\}$ be a basis of $\huaL$ such that $\omega(e_i,e_j)=\omega(f_i,f_j)=0$, $\omega(e_i,f_j)=\delta_{ij}$ and $\{e^*_1,\cdot\cdot\cdot,e^*_n,f^*_1,\cdot\cdot\cdot,f^*_n\}$ is the dual basis. Then for all $i,j,k,l$, we have
\begin{eqnarray*}
\huaB(J(e_i+e^*_j),J(e_k+e^*_l))&=&\huaB(f_j+f^*_i,f_l+f^*_k)
=\langle f_j,f^*_k\rangle +\langle f^*_i,f_l\rangle =\delta_{jk}+\delta_{il}\\
&=&\huaB(e_i+e^*_j,e_k+e^*_l),\\
\huaB(J(e_i+e^*_j),J(e_k+f^*_l))&=&\huaB(f_j+f^*_i,-e_l+f^*_k)
=\langle f_j,f^*_k\rangle -\langle f^*_i,e_l\rangle =\delta_{jk}\\
&=&\huaB(e_i+e^*_j,e_k+f^*_l),\\
\huaB(J(e_i+e^*_j),J(f_k+e^*_l))&=&\huaB(f_j+f^*_i,f_l-e^*_k)
=-\langle f_j,e^*_k\rangle +\langle f^*_i,f_l\rangle =\delta_{il}\\
&=&\huaB(e_i+e^*_j,f_k+e^*_l),\\
\huaB(J(e_i+e^*_j),J(f_k+f^*_l))&=&\huaB(f_j+f^*_i,-e_l-e^*_k)
=-\langle f_j,e^*_k\rangle -\langle f^*_i,e_l\rangle =0\\
&=&\huaB(e_i+e^*_j,f_k+f^*_l),\\
\huaB(J(e_i+f^*_j),J(e_k+f^*_l))&=&\huaB(-e_j+f^*_i,-e_l+f^*_k)
=-\langle e_j,f^*_k\rangle -\langle f^*_i,e_l\rangle =0\\
&=&\huaB(e_i+f^*_j,e_k+f^*_l),\\
\huaB(J(e_i+f^*_j),J(f_k+e^*_l))&=&\huaB(-e_j+f^*_i,f_l-e^*_k)
=\langle e_j,e^*_k\rangle +\langle f^*_i,f_l\rangle =\delta_{jk}+\delta_{il}\\
&=&\huaB(e_i+f^*_j,f_k+e^*_l),\\
\huaB(J(e_i+f^*_j),J(f_k+f^*_l))&=&\huaB(-e_j+f^*_i,-e_l-e^*_k)
=\langle e_j,e^*_k\rangle -\langle f^*_i,e_l\rangle =\delta_{jk}\\
&=&\huaB(e_i+f^*_j,f_k+f^*_l),\\
\huaB(J(f_i+e^*_j),J(f_k+e^*_l))&=&\huaB(f_j-e^*_i,f_l-e^*_k)
=-\langle f_j,e^*_k\rangle -\langle e^*_i,f_l\rangle =0\\
&=&\huaB(f_i+e^*_j,f_k+e^*_l),\\
\huaB(J(f_i+e^*_j),J(f_k+f^*_l))&=&\huaB(f_j-e^*_i,-e_l-e^*_k)
=-\langle f_j,e^*_k\rangle +\langle e^*_i,e_l\rangle =\delta_{il}\\
&=&\huaB(f_i+e^*_j,f_k+f^*_l),\\
\huaB(J(f_i+f^*_j),J(f_k+f^*_l))&=&\huaB(-e_j-e^*_i,-e_l-e^*_k)
=\langle e_j,e^*_k\rangle +\langle e^*_i,e_l\rangle =\delta_{jk}+\delta_{il}\\
&=&\huaB(f_i+f^*_j,f_k+f^*_l),
\end{eqnarray*}
which implies that $\huaB(J(x+\xi),J(y+\eta))=\huaB(x+\xi,y+\eta)$ for all $x,y\in\huaL$ and $\xi,\eta\in\huaL^*$. Thus, $(\huaL\ltimes_{L^*_\lhd,-L^*_\lhd-R^*_\rhd}\huaL^*,\huaB,J)$ is a pseudo-K\"{a}hler Leibniz algebra.}
\end{proof}

\noindent
{\bf Acknowledgements. } This research is supported by NSFC (11922110,12371029).

\noindent
{\bf Declaration of interests. } The authors have no conflicts of interest to disclose.


\noindent
{\bf Data availability. } Data sharing is not applicable to this article as no new data were created or analyzed in this study.

 \end{document}